\documentclass[12pt,reqno]{amsart}

\usepackage{amsaddr}
\usepackage{fullpage}
\usepackage[dvipsnames]{xcolor}
\usepackage[hidelinks,bookmarks=false,linkcolor=black]{hyperref}
\usepackage{graphicx}
\usepackage{natbib}

\usepackage{amssymb, amsmath, amsfonts, amsthm}
\usepackage{bm}
\usepackage{mathtools}
\mathtoolsset{showonlyrefs=true}

\newtheorem{theorem}{Theorem}
\newtheorem{corollary}{Corollary}
\newtheorem{lemma}{Lemma}
\newtheorem{proposition}{Proposition}
\newtheorem{assumption}{Assumption}
\theoremstyle{definition}

\newtheorem{remark}{Remark}
\newtheorem{example}{Example}

\usepackage{pgfplots}
\usepackage{subcaption}
\pgfplotsset{compat=1.18}

\title{Priors for second-order unbiased Bayes estimators}

\author{Mana Sakai$^{\ast\dagger}$
\quad
Takeru Matsuda$^{\ast\ddagger}$
\quad
Tatsuya Kubokawa$^{\ast}$
}

\address{
$^{\ast}$The University of Tokyo
\\
$^{\dagger}$RIKEN Center for Advanced Intelligence Project
\\
$^{\ddagger}$RIKEN Center for Brain Science
\footnote{\textit{Email}: mana.sakai.77@gmail.com, matsuda@mist.i.u-tokyo.ac.jp, tatsuya@e.u-tokyo.ac.jp}
}

\begin{document}
\begin{abstract}
    Asymptotically unbiased priors, introduced by \cite{hartigan1965asymptotically}, are designed to achieve second-order unbiasedness of Bayes estimators. This paper extends Hartigan's framework to non-i.i.d. models by deriving a system of partial differential equations that characterizes asymptotically unbiased priors. Furthermore, we establish a necessary and sufficient condition for the existence of such priors and propose a simple procedure for constructing them. The proposed method is applied to the linear regression model and the nested error regression model (also known as the random effects model). Simulation studies evaluate the frequentist properties of the Bayes estimator under the asymptotically unbiased prior for the nested error regression model, highlighting its effectiveness in small-sample settings.
\end{abstract}

\maketitle

\section{Introduction}

\subsection{Background}

In Bayesian inference, the choice of prior distribution is a crucial consideration. When researchers have prior knowledge about the parameter of interest, they can incorporate this information into the prior. In the absence of prior knowledge, however, it is common to use so-called objective priors. Well-known examples of objective priors include Jeffreys' prior \citep{jeffreys1991theory}, probability matching priors \citep{welch1963formulae,datta1995priors} and reference priors \citep{bernardo1979reference,berger1992development}, among others.

In addition to these objective priors, \cite{hartigan1965asymptotically} introduced the concept of asymptotically unbiased priors, which are designed to achieve second-order unbiasedness of Bayes estimators. While Hartigan's original work considered general loss functions, we focus on the squared error loss, for which the posterior mean is the Bayes estimator. We define bias as its difference from the true parameter value.

\cite{hartigan1965asymptotically} showed that, in general, the posterior mean has a bias of $O(n^{-1})$, where $n$ is the sample size. He defined the asymptotically unbiased prior as one that reduces this bias to $o(n^{-1})$, achieving second-order unbiasedness. \cite{hartigan1965asymptotically}  considered i.i.d. models and derived conditions that asymptotically unbiased priors should satisfy. For one-dimensional parameter models, the asymptotically unbiased priors are proportional to the Fisher information of the models. For multi-parameter models, such priors are characterized as solutions to a system of partial differential equations.

The utility of asymptotically unbiased priors lies in their ability to reduce bias of the Bayes estimator, which is particularly important when the sample size is small and bias tends to be large. In this sense, asymptotically unbiased priors are similar to the bias reduction approach for maximum likelihood estimators introduced by \cite{firth1993bias}. The connection is more than just a conceptual similarity. As shown in Remark \ref{rem:Firth}, for i.i.d. models, the condition that the log-prior must satisfy to be asymptotically unbiased is the sum of two other key terms from the literature: the term defining the moment matching prior \citep{ghosh2011moment}, which is a prior designed to match the posterior mean with the maximum likelihood estimator up to $o_{p}(n^{-1})$, and the score modification term from Firth's method. Therefore, the asymptotically unbiased prior can be interpreted as a bias-reduced version of the moment matching prior, where the bias reduction is achieved using Firth's method.

\subsection{Motivation and contributions}

The results of \cite{hartigan1965asymptotically}  are significant, but the restriction to i.i.d. models often proves too limiting in practice. For instance, in Bayesian regression problems, it is customary to treat the response variables as random while assuming the explanatory variables are fixed. In such cases, the data are not i.i.d. and the results of \cite{hartigan1965asymptotically} do not directly apply.

In this paper, we extend the findings of \cite{hartigan1965asymptotically}  to non-i.i.d. models, deriving a system of partial differential equations that characterizes asymptotically unbiased priors. The generalization broadens the applicability of asymptotically unbiased priors to a wider range of models.

Moreover, we establish a necessary and sufficient condition for the existence of such priors and present a simple procedure for constructing them. These results are particularly valuable, as it provides a unified framework for constructing asymptotically unbiased priors across various models. This condition and the construction method are also applicable to other classes of priors $\pi(\theta)$ for a $p$-dimensional parameter $\theta$, which are defined as solutions to a system of partial differential equations of the form $\partial\log\pi(\theta)/\partial\theta_{r}=A_{r}(\theta)\ (r=1,\ldots,p)$, such as the moment matching priors proposed by \cite{ghosh2011moment}.

To demonstrate the usefulness of our construction method, we apply it to the linear regression model and the nested error regression model. In the linear regression model, we illustrate how the asymptotically unbiased prior depends on the chosen parameterization and show that our method successfully derives priors that lead to exactly unbiased estimators. For the nested error regression model, although an analytical form of the posterior mean with the asymptotically unbiased prior is not available, we evaluate its performance through simulation studies. The prior we construct for the nested error regression model, to the best of our knowledge, has not been explored in the existing literature. Our simulation studies highlight its effectiveness in small samples (i.e. small number of areas/groups), particularly in terms of bias reduction.

Our contributions are threefold. First, we extend the work of \cite{hartigan1965asymptotically} by deriving the system of partial differential equations for asymptotically unbiased priors in a general non-i.i.d. setting. Second, we establish a necessary and sufficient condition for their existence and provide a simple construction procedure. Third, we propose a novel asymptotically unbiased prior for the nested error regression/random effects model. Simulation studies demonstrate that the proposed prior reduces bias effectively in small samples.

\section{Main results}\label{sec:MainResults}

\subsection{Second-order bias of Bayes estimators}\label{subsec:SecondOrderBias}

In the following two subsections, we present the main theoretical results of this paper. This subsection begins with the derivation of the second-order bias of Bayes estimators under simple regularity conditions (Corollary~\ref{cor:Bias.DepOnN}). Building on this result, with additional assumptions, we further simplify the evaluation of bias (Corollary~\ref{cor:Bias.IndepOfN}). All the proofs of the following two subsections are provided in Appendix~\ref{sec:Proofs}.

Suppose $X_{1},\ldots,X_{n}$ has a joint density function $f_{n}(x_{1},\ldots,x_{n}\mid\theta)$. Here, $\theta$ is an interior point of the parameter space $\Theta$, which is a rectangular subspace of $\mathbb{R}^{p}$. The log-likelihood function is denoted by $\ell_{n}(\theta)=\log f_{n}(x_{1},\ldots,x_{n}\mid\theta)$. To find the maximum likelihood estimator $\hat{\theta}^{ML}$, we need to solve the equation $\partial\ell_{n}(\theta)/\partial\theta\mid_{\theta=\hat{\theta}^{ML}}=0$. Let $\hat{\theta}^{B}$ denote the posterior mean, which we call the (generalized) Bayes estimator, corresponding to a prior density $\pi$. It can be calculated as
\[
  \hat{\theta}^{B}
  =\frac{\int_{\Theta}\theta\pi(\theta)f_{n}(x_{1},\ldots,x_{n}\mid\theta)d\theta}{\int_{\Theta}\pi(\theta)f_{n}(x_{1},\ldots,x_{n}\mid\theta)d\theta}
  =\frac{\int_{\Theta}\theta\pi(\theta)\exp\{\ell_{n}(\theta)\}d\theta}{\int_{\Theta}\pi(\theta)\exp\{\ell_{n}(\theta)\}d\theta}
  .
\]
The negative second-order derivative of the log-likelihood is denoted by
\[
  H_{n}(\theta)
  =-n^{-1}\frac{\partial^{2}\ell_{n}(\theta)}{\partial\theta\partial\theta^{\mathrm{\scriptscriptstyle{T}}}}
  .
\]

We assume the following regularity conditions.
\begin{assumption}\label{asm:Limits1}
  Assume that the following conditions are satisfied: (i) $\hat{\theta}^{ML}-\theta=O_{p}(n^{-1/2})$; (ii) $\ell_{n}(\theta)=O_{p}(n)$; (iii) $\ell_{n}(\theta)$ is three times continuously differentiable; (iv) the prior density $\pi(\theta)$ is differentiable; (v) $H_{n}(\theta)$ is invertible and $H_{n}(\hat{\theta}^{ML})$ is positive definite.
\end{assumption}

Note that conditions (i) and (ii) imply $\partial\ell_{n}(\theta)/\partial\theta=O_{p}(n^{1/2})$. For notational simplicity, we write
\begin{align*}
  &I_{n,rs}(\theta)
  =n^{-1}\frac{\partial\ell_{n}(\theta)}{\partial\theta_{r}}\frac{\partial\ell_{n}(\theta)}{\partial\theta_{s}}
  ,\quad
  J_{n,rs,t}(\theta)
  =n^{-1}\frac{\partial^{2}\ell_{n}(\theta)}{\partial\theta_{r}\partial\theta_{s}}\frac{\partial\ell_{n}(\theta)}{\partial\theta_{t}}
  ,\\
  &K_{n,rst}(\theta)
  =n^{-1}\frac{\partial^{3}\ell_{n}(\theta)}{\partial\theta_{r}\partial\theta_{s}\partial\theta_{t}}
  .
\end{align*}
Additionally, we denote the $(r,s)$th element of any matrix $M$ as $M_{rs}$ and the $(r,s)$th element of its inverse (if it exists) as $M^{rs}$.

\begin{theorem}\label{thm:BayesEst.True.Diff}
  Suppose the model satisfies Assumption~\ref{asm:Limits1}. Then, the Bayes estimator can be decomposed as
  \begin{equation}\label{eq:BayesEst.True.Diff1}
    \hat{\theta}^{B}
    =\theta+n^{-1}H_{n}^{-1}(\theta)\left[\frac{\partial}{\partial\theta}\{\log\pi(\theta)+2\ell_{n}(\theta)\}+\frac{1}{2}\sum_{r=1}^{p}\sum_{s=1}^{p}H_{n}^{rs}(\theta)A_{n,rs}(\theta)\right]+o_{p}(n^{-1})
    ,
  \end{equation}
  where each element of $p$-dimensional vector
  $A_{n,rs}(\theta)\ (r,s=1,\ldots,p)$ is
  \[
    A_{n,rsv}(\theta)
    =K_{n,vrs}(\theta)+2J_{n,vr,s}(\theta)+\sum_{t=1}^{p}\sum_{u=1}^{p}H_{n}^{tu}(\theta)I_{n,su}(\theta)K_{n,rtv}(\theta)
    \quad(v=1,\ldots,p)
    .
  \]
\end{theorem}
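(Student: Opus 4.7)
\emph{Plan.} The approach is a Laplace expansion of the ratio of integrals defining $\hat{\theta}^{B}$, localised at the true value $\theta$. After changing variables $u=\sqrt{n}(\theta'-\theta)$ in both integrals and pulling out the common factor $\pi(\theta)e^{\ell_{n}(\theta)}$, each integrand becomes $\exp\{Q_{n}(u)+\beta_{n}(u)\}$, where $Q_{n}(u)=\sum_{r}L_{r}u_{r}-\tfrac{1}{2}\sum_{r,s}H_{n,rs}u_{r}u_{s}$ with $L_{r}=n^{-1/2}\partial\ell_{n}(\theta)/\partial\theta_{r}$, and $\beta_{n}$ collects the cubic Taylor term $\tfrac{1}{6\sqrt{n}}\sum_{r,s,t}K_{n,rst}u_{r}u_{s}u_{t}$, the linear prior term $n^{-1/2}\sum_{r}(\partial\log\pi/\partial\theta_{r})u_{r}$, and a polynomial remainder of order $O_{p}(n^{-1})$. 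Assumption~\ref{asm:Limits1} gives $L=O_{p}(1)$, $H_{n}$ positive definite, and $\beta_{n}=O_{p}(n^{-1/2})$ on the region where the Gaussian mass concentrates.

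\emph{Gaussian reduction.} Completing the square in $Q_{n}$ around $\mu=H_{n}^{-1}L$ and setting $v=u-\mu$ leaves a Gaussian $N(0,H_{n}^{-1})$ in $v$, with the factor $\exp\{\tfrac{1}{2}\sum_{r,s}H_{n}^{rs}L_{r}L_{s}\}$ cancelling between numerator and denominator. The Bayes estimator then equals $\theta+n^{-1/2}(\mu+R_{n})$ with $R_{n}=\langle v\,e^{\beta_{n}(v+\mu)}\rangle/\langle e^{\beta_{n}(v+\mu)}\rangle$ and $\langle\cdot\rangle$ the centred $N(0,H_{n}^{-1})$ expectation. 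Expanding $e^{\beta_{n}}=1+\beta_{n}+O_{p}(n^{-1})$ yields $R_{n}=\langle v\,\beta_{n}\rangle+o_{p}(n^{-1/2})$, and Isserlis' theorem applied to the monomials in $v+\mu$ produces, after multiplication by $n^{-1/2}$, (a) the prior-gradient contribution matching $\partial\log\pi/\partial\theta$, (b) a trace-type cubic contribution matching the $\sum_{r,s}H_{n}^{rs}K_{n,vrs}$ part of $A_{n,rsv}$, and (c) (through the $\mu_{r}\mu_{s}$ factor inside the cubic, using $\mu_{r}\mu_{s}=\sum_{\rho,\sigma}H_{n}^{r\rho}H_{n}^{s\sigma}I_{n,\rho\sigma}$) a contribution matching the $H_{n}^{tu}I_{n,su}K_{n,rtv}$ summand of $A_{n,rsv}$.

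\emph{Matching the stated form.} The direct output contains a single copy of $\partial\ell_{n}/\partial\theta$ coming from $n^{-1/2}\mu=n^{-1}H_{n}^{-1}\partial\ell_{n}/\partial\theta$, while the theorem writes $2\ell_{n}$ in the outer bracket together with $2J_{n,vr,s}$ inside $A_{n,rsv}$. These agree because $\sum_{r,s}H_{n}^{rs}J_{n,vr,s}(\theta)=-\partial\ell_{n}(\theta)/\partial\theta_{v}$, which is immediate from $\partial^{2}\ell_{n}/\partial\theta_{v}\partial\theta_{r}=-nH_{n,vr}$ and $\sum_{r}H_{n}^{rs}H_{n,vr}=\delta_{v}^{s}$: folding $\tfrac{1}{2}\sum_{r,s}H_{n}^{rs}\cdot 2J_{n,vr,s}=-\partial_{v}\ell_{n}$ into the bracket absorbs one copy of $\partial_{v}\ell_{n}$ from the explicit $2\ell_{n}$, leaving the single copy that the direct calculation produced. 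The $K_{n,vrs}$ and $I_{n}K_{n}$ pieces of $A_{n,rsv}$ are identified with the contributions (b) and (c).

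\emph{Main obstacle.} The substantive technical work is to upgrade this formal expansion to an $o_{p}(n^{-1})$ identity. This requires (i) a standard Laplace tail estimate showing that the integrals over $\{\|u\|>c\sqrt{\log n}\}$ contribute $o_{p}(n^{-1})$ relative to the Gaussian mass, using positive definiteness of $H_{n}(\hat{\theta}^{ML})$ from (v) together with the $C^{3}$ regularity from (iii) to secure a uniform quadratic lower bound on $\ell_{n}(\theta)-\ell_{n}(\theta')$ on the bulk; and (ii) uniform control of the third-order Taylor remainder of $\ell_{n}$ and the second-order remainder of $\log\pi$, so that the $O_{p}(n^{-1})$ polynomial piece of $\beta_{n}$ integrates against the Gaussian to at most $O_{p}(n^{-1})$, contributing only $o_{p}(n^{-1})$ after the outer $n^{-1/2}$ factor. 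These tail and uniformity steps, rather than the algebraic bookkeeping, are the principal difficulty.
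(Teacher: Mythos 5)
Your proposal is correct in substance but takes a genuinely different route from the paper. The paper decomposes $\hat{\theta}^{B}-\theta$ into $\hat{\theta}^{B}-\hat{\theta}^{ML}$ and $\hat{\theta}^{ML}-\theta$: the first difference is obtained by a Laplace expansion centered at the maximum likelihood estimator (where the linear term vanishes because the score is zero there, and positive definiteness of $H_{n}(\hat{\theta}^{ML})$ is available exactly where the Gaussian is formed), and the second by a second-order stochastic expansion of the score equation; the $2J_{n,vr,s}$ and $H_{n}^{tu}I_{n,su}K_{n,rtv}$ pieces of $A_{n,rsv}$ arise from that second expansion. You instead carry out a single Laplace expansion centered at the true $\theta$, complete the square so the Gaussian has shifted mean $\mu=H_{n}^{-1}L$, and recover the $I_{n}K_{n}$ piece through $\mu_{r}\mu_{s}=\sum_{\rho,\sigma}H_{n}^{r\rho}H_{n}^{s\sigma}I_{n,\rho\sigma}$. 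Your reconciliation with the stated form via the exact identity $\sum_{r,s}H_{n}^{rs}J_{n,vr,s}(\theta)=-\partial\ell_{n}(\theta)/\partial\theta_{v}$ (which indeed follows from $n^{-1}\partial^{2}\ell_{n}/\partial\theta_{v}\partial\theta_{r}=-H_{n,vr}$) is valid, and your contributions (a)--(c), combined with the single copy of the score from $n^{-1/2}\mu$, reproduce exactly the bracket of \eqref{eq:BayesEst.True.Diff1} after that substitution. What the paper's route buys: the intermediate lemma for $\hat{\theta}^{B}-\hat{\theta}^{ML}$ is reused directly in the proof of Corollary~\ref{cor:Bias.IndepOfN}, and no shifted-mean bookkeeping is needed. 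What your route buys: one self-contained expansion with no separate expansion of the MLE, and a transparent structural origin for each term of $A_{n,rs}$. Two cautions: Assumption~\ref{asm:Limits1}(v) asserts positive definiteness only at $\hat{\theta}^{ML}$, so your Gaussian $N(0,H_{n}^{-1}(\theta))$ needs the (standard, but worth stating) observation that $H_{n}(\theta)=H_{n}(\hat{\theta}^{ML})+O_{p}(n^{-1/2})$ is positive definite with probability tending to one; and the tail and uniform-remainder control you flag as the main obstacle is left unproved, but this is the same level of informality as the paper's own proof, which also integrates the expansion remainders against the Gaussian over all of $\mathbb{R}^{p}$ without explicit tail estimates, so it is not a gap relative to the paper's standard.
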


Assuming that the expectation of the remainder term $o_{p}(n^{-1})$ in \eqref{eq:BayesEst.True.Diff1} is $o(n^{-1})$, we can evaluate the bias of the Bayes estimator as follows.

\begin{corollary}\label{cor:Bias.DepOnN}
  Suppose the model satisfies Assumption~\ref{asm:Limits1}. Then, the bias of the Bayes estimator is
  \begin{equation}\label{eq:BayesEstBias}
    E(\hat{\theta}^{B})-\theta
    =n^{-1}E\left(H_{n}^{-1}(\theta)\left[\frac{\partial}{\partial\theta}\left\{\log\pi(\theta)+2\ell_{n}(\theta)\right\}+\frac{1}{2}\sum_{r=1}^{p}\sum_{s=1}^{p}H_{n}^{rs}(\theta)A_{n,rs}(\theta)\right]\right)+o(n^{-1})
    .
  \end{equation}
  If, in particular, $H_{n}(\theta)$ is non-stochastic (and so is $K_{n,rst}$), the bias of the Bayes estimator can be expressed as
  \[
    E(\hat{\theta}^{B})-\theta
    =n^{-1}H_{n}^{-1}(\theta)\left[\frac{\partial\log\pi(\theta)}{\partial\theta}+\frac{1}{2}\sum_{r=1}^{p}\sum_{s=1}^{p}H_{n}^{rs}(\theta)E\{A_{n,rs}(\theta)\}\right]+o(n^{-1})
    .
  \]
  Thus, the Bayes estimator is second-order unbiased if the prior satisfies
  \begin{equation}\label{eq:PriorCond.DepOnN}
    \frac{\partial\log\pi(\theta)}{\partial\theta}
    =-\frac{1}{2}\sum_{r=1}^{p}\sum_{s=1}^{p}H_{n}^{rs}(\theta)E\{A_{n,rs}(\theta)\}
    \equiv\phi(\theta)
    .
  \end{equation}
  In this case, $E\{A_{n,rs}(\theta)\}$ has the form of
  \[
    E\{A_{n,rsv}(\theta)\}
    =K_{n,vrs}(\theta)+2E\{J_{n,vr,s}(\theta)\}
    +\sum_{t=1}^{p}\sum_{u=1}^{p}H_{n}^{tu}(\theta)E\{I_{n,su}(\theta)\}K_{n,rtv}(\theta)
    .
  \]
\end{corollary}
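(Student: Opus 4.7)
The plan is to obtain \eqref{eq:BayesEstBias} by taking expectations termwise in the decomposition \eqref{eq:BayesEst.True.Diff1} of Theorem~\ref{thm:BayesEst.True.Diff}. The only nontrivial step at this stage is justifying that $E\{o_{p}(n^{-1})\} = o(n^{-1})$, which is exactly the hypothesis stated in the sentence preceding the corollary; granted this, the identity \eqref{eq:BayesEstBias} follows immediately by linearity of expectation, with no further manipulation.

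Next, I specialize to the case in which $H_{n}(\theta)$ (hence also its inverse $H_{n}^{-1}(\theta)$) and the third-derivative quantities $K_{n,rst}(\theta)$ are non-stochastic. Because $\partial\log\pi(\theta)/\partial\theta$ depends only on $\theta$, and because $H_{n}^{rs}(\theta)$ and $K_{n,vrs}(\theta)$ now pass through the expectation as constants, the only stochastic pieces inside the bracket of \eqref{eq:BayesEstBias} are the score $\partial\ell_{n}(\theta)/\partial\theta$ and the quantities $J_{n,vr,s}$ and $I_{n,su}$. The standard score identity $E\{\partial\ell_{n}(\theta)/\partial\theta\} = 0$, which follows from Assumption~\ref{asm:Limits1} by differentiating $\int f_{n}\,dx = 1$ under the integral sign, eliminates the $2\ell_{n}$ contribution. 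Pulling $H_{n}^{-1}(\theta)$ outside the expectation then yields the second displayed formula of the corollary.

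The unbiasedness PDE \eqref{eq:PriorCond.DepOnN} is now read off directly: since $H_{n}^{-1}(\theta)$ is positive definite by Assumption~\ref{asm:Limits1}(v), the leading-order bias vanishes if and only if the bracket is zero, which rearranges to the stated equation defining $\phi(\theta)$. Finally, the simplified expression for $E\{A_{n,rsv}(\theta)\}$ follows term-by-term from the definition in Theorem~\ref{thm:BayesEst.True.Diff}: $K_{n,vrs}(\theta)$ is non-stochastic and hence equals its own expectation, $E\{J_{n,vr,s}(\theta)\}$ is left as is, and in the double-sum term the factors $H_{n}^{tu}(\theta)$ and $K_{n,rtv}(\theta)$ can be pulled out of the expectation so that only $E\{I_{n,su}(\theta)\}$ remains stochastic.

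The only genuinely delicate step is the first one, namely the passage from the $o_{p}(n^{-1})$ remainder of Theorem~\ref{thm:BayesEst.True.Diff} to an $o(n^{-1})$ bias; everything else is linearity of expectation, the score identity, and bookkeeping. As the authors note, this uniform-integrability-type assumption is being imposed rather than proved, so the remainder of the argument is essentially algebraic.
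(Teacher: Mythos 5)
Your proposal is correct and follows essentially the same route the paper takes (implicitly, since the paper treats this corollary as immediate from Theorem~\ref{thm:BayesEst.True.Diff}): take expectations in \eqref{eq:BayesEst.True.Diff1} under the stated assumption $E\{o_{p}(n^{-1})\}=o(n^{-1})$, then in the non-stochastic case pull $H_{n}^{-1}$, $H_{n}^{rs}$ and $K_{n,rst}$ through the expectation and use the zero-mean score identity to remove the $2\,\partial\ell_{n}/\partial\theta$ term. The only cosmetic point is that vanishing of the leading bias term needs just invertibility of $H_{n}(\theta)$ (Assumption~\ref{asm:Limits1}(v)), not positive definiteness, and the corollary anyway only asserts sufficiency of \eqref{eq:PriorCond.DepOnN}.
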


It should be noted that the prior satisfying \eqref{eq:PriorCond.DepOnN} depends on the sample size $n$.

It seems relatively easier to find a prior that ensures second-order unbiasedness of the Bayes estimator if $H_{n}(\theta)$ is non-stochastic. However, in general, $H_{n}(\theta)$ is a random matrix. In such cases, we need alternative strategies to find a prior that leads to a second-order unbiased Bayes estimator. One possible approach is to consider the asymptotic properties of elements such as $H_{n}(\theta)$ and $K_{n, rst}(\theta)$. With some additional assumptions, we can simplify the evaluation of the bias of the Bayes estimator given in \eqref{eq:BayesEstBias}.

\begin{assumption}\label{asm:Limits2}
  Assume that the following conditions are satisfied: (i) there exists a non-singular $p\times p$ constant matrix $H(\theta)$ that satisfies $H_{n}(\theta)=H(\theta)+O_{p}(n^{-1/2})$, and consequently, $H_{n}^{-1}(\theta)=H^{-1}(\theta)+O_{p}(n^{-1/2})$; (ii) for each $r,s,t=1,\ldots,p$, there exists a constant $K_{rst}(\theta)$ that satisfies $K_{n,rst}(\theta)=K_{rst}(\theta)+o_{p}(1)$; (iii) there exists a $p\times p$ matrix $I(\theta)$ that satisfies $E\{I_{n}(\theta)\}=I(\theta)+o(1)$; (iv) for each $r,s,t=1,\ldots,p$, there exists $J_{rs,t}(\theta)$ that satisfies $E\{J_{n,rs,t}(\theta)\}=J_{rs,t}(\theta)+o(1)$.
\end{assumption}

With Assumption~\ref{asm:Limits2} in place, we can now proceed to simplify the evaluation of the bias of the Bayes estimator, taking advantage of the asymptotic behavior of the matrices involved. Specifically, these conditions allow us to express the bias in a form that does not depend on the sample size $n$.

\begin{corollary}\label{cor:Bias.IndepOfN}
  Suppose the model satisfies Assumptions~\ref{asm:Limits1} and \ref{asm:Limits2}. Then, the bias of the Bayes estimator is
  \[
    E(\hat{\theta}^{B})-\theta
    =n^{-1}H^{-1}(\theta)\left\{\frac{\partial\log\pi(\theta)}{\partial\theta}+\frac{1}{2}\sum_{r=1}^{p}\sum_{s=1}^{p}H^{rs}(\theta)A_{rs}(\theta)\right\}+o(n^{-1})
    ,
  \]
  where each element of $p$-dimensional vector $A_{rs}(\theta)\ (r,s=1,\ldots,p)$ is defined as
  \[
    A_{rsv}(\theta)
    =K_{vrs}(\theta)+2J_{vr,s}(\theta)+\sum_{t=1}^{p}\sum_{u=1}^{p}H^{tu}(\theta)I_{su}(\theta)K_{rtv}(\theta)
    \quad(v=1,\ldots,p)
    .
  \]
  This implies that the Bayes estimator is second-order unbiased if the prior $\pi(\theta)$ satisfies
  \begin{equation}\label{eq:PriorCond.IndepOfN}
    \frac{\partial\log\pi(\theta)}{\partial\theta}
    =-\frac{1}{2}\sum_{r=1}^{p}\sum_{s=1}^{p}H^{rs}(\theta)A_{rs}(\theta)
    \equiv\phi(\theta)
    .
  \end{equation}
\end{corollary}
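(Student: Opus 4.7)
The plan is to derive Corollary~\ref{cor:Bias.IndepOfN} from Corollary~\ref{cor:Bias.DepOnN} by replacing each stochastic quantity in the general bias expression with the deterministic limit supplied by Assumption~\ref{asm:Limits2}. Decomposing the right-hand side of \eqref{eq:BayesEstBias} into three pieces---one for $\partial\log\pi/\partial\theta$, one for $2\partial\ell_{n}/\partial\theta$, and one for $(1/2)\sum H_{n}^{rs}A_{n,rs}$---I would evaluate the expectation of each piece up to $o(n^{-1})$ and combine.

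The first piece is easy: since $\partial\log\pi/\partial\theta$ is deterministic and Assumption~\ref{asm:Limits2}(i) (with the uniform integrability implicit in the $o$-notation used throughout) gives $E\{H_{n}^{-1}(\theta)\}=H^{-1}(\theta)+o(1)$, the first piece reduces to $n^{-1}H^{-1}\partial\log\pi/\partial\theta+o(n^{-1})$. The third piece is essentially a substitution using $H_{n}^{rs}\to H^{rs}$ in probability, $K_{n,rsv}\to K_{rsv}$, $E\{I_{n,su}\}\to I_{su}$, and $E\{J_{n,vr,s}\}\to J_{vr,s}$ supplied by Assumption~\ref{asm:Limits2}(i)--(iv); expanding the matrix product produces the $K_{vrs}$, $H^{tu}I_{su}K_{rtv}$, and the leading-order part of the $2J_{vr,s}$ contributions inside the target $A_{rsv}$.

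The crux is the second piece together with the cross-terms left over from the third piece, since $\partial\ell_{n}/\partial\theta_{t}=O_{p}(n^{1/2})$ and $J_{n,rs,t}=O_{p}(n^{1/2})$ mean that the naive approximations $(H_{n}^{-1}-H^{-1})\partial\ell_{n}/\partial\theta$ and $(H_{n}^{rs}-H^{rs})J_{n,rs,t}$ are each only $O_{p}(1)$ and therefore contribute at order $n^{-1}$ to the bias. To evaluate these I would employ the matrix Taylor expansion $H_{n}^{-1}-H^{-1}=H^{-1}(H-H_{n})H^{-1}+O_{p}(n^{-1})$ combined with the elementary identity $H_{n,ab}\cdot\partial\ell_{n}/\partial\theta_{t}=-J_{n,ab,t}$, so that every offending cross-term becomes an expectation whose leading behavior is controlled by Assumption~\ref{asm:Limits2}(iv).

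The main obstacle is the algebraic bookkeeping that follows: one must verify that the cross-term from the second piece and the two cross-terms generated inside the third piece (from $H_{n}^{-1}-H^{-1}$ and from $H_{n}^{rs}-H^{rs}$) combine---via the symmetry of $H^{rs}$ and of $J_{rs,t}$ in its first two indices---to reproduce exactly the $2J_{vr,s}$ coefficient appearing in the $A_{rsv}$ of the corollary, and not a larger multiple. Once this identification is established, assembling the three pieces and absorbing the residual $o(n^{-1})$ gives the stated formula, and the sufficient condition \eqref{eq:PriorCond.IndepOfN} follows from setting the resulting order-$n^{-1}$ term to zero.
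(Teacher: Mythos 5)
Your plan is sound and would yield the stated result, but it is organized differently from the paper's proof. The paper never takes expectations in the assembled formula \eqref{eq:BayesEstBias}; instead it goes back to Lemma~\ref{lem:BayesEst.ML.Diff} for $\hat{\theta}^{B}-\hat{\theta}^{ML}$ and re-derives the expansion of $\hat{\theta}^{ML}-\theta$ with the limit matrix inserted at the start, writing $H(\theta)\{H_{n}(\theta)-\tfrac{1}{2}\sum_{r}(\hat{\theta}^{ML}_{r}-\theta_{r})K_{n,r\cdot\cdot}(\theta)\}^{-1}=2I_{p}-H_{n}(\theta)H^{-1}(\theta)+\cdots$, so the troublesome Hessian-times-score product appears at once as $-H_{n}(\theta)H^{-1}(\theta)\,\partial\ell_{n}(\theta)/\partial\theta$, whose $v$th component is $\sum_{r,s}H^{rs}(\theta)J_{n,vr,s}(\theta)$; the expectation is then immediate from the zero-mean score and Assumption~\ref{asm:Limits2}(iii)--(iv), with every random factor already carrying a deterministic coefficient and no cross-term cancellation left to check. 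Your route---post-processing Corollary~\ref{cor:Bias.DepOnN} with $H_{n}^{-1}-H^{-1}=H^{-1}(H-H_{n})H^{-1}+O_{p}(n^{-1})$ and the identity $H_{n,ab}\,\partial\ell_{n}/\partial\theta_{t}=-J_{n,ab,t}$---rests on exactly the same two ingredients, and the bookkeeping you flag as the main obstacle does close, in fact more cleanly than you anticipate: the identity gives $\sum_{r,s}H_{n}^{rs}(\theta)J_{n,vr,s}(\theta)=-\partial\ell_{n}(\theta)/\partial\theta_{v}$ exactly, so the $2J_{n,vr,s}$ part of $A_{n,rs}$ together with the $2\,\partial\ell_{n}/\partial\theta$ piece collapses to $H_{n}^{-1}(\theta)\,\partial\ell_{n}(\theta)/\partial\theta$, whose expectation is, up to $o(1)$, $H^{-1}(\theta)$ applied to the vector with $v$th entry $\sum_{r,s}H^{rs}(\theta)J_{vr,s}(\theta)$---precisely the contribution of the $2J_{vr,s}$ term in $A_{rsv}$; equivalently, in your accounting the naive substitution already supplies the full $2J_{vr,s}$ coefficient and the three $O_{p}(1)$ cross-terms cancel one another rather than topping it up. The trade-off is that the paper's ordering needs only one limit-substitution step plus $E\{\partial\ell_{n}(\theta)/\partial\theta\}=0$, while your ordering derives Corollary~\ref{cor:Bias.IndepOfN} directly from the already-stated Theorem~\ref{thm:BayesEst.True.Diff} without reopening its proof; both arguments share the paper's implicit assumptions that the score has mean zero and that expectations of the stochastic remainders (and of quantities such as $H_{n}^{-1}$) converge at the corresponding deterministic order, which is the uniform-integrability caveat you note.
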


\subsection{Existence of asymptotically unbiased priors}\label{subsec:ExistenceOfPrior}

An asymptotically unbiased prior, if it exists, can be obtained by solving \eqref{eq:PriorCond.DepOnN} (assuming $H_{n}(\theta)$ is non-stochastic) or \eqref{eq:PriorCond.IndepOfN} (assuming the model satisfies Assumption~\ref{asm:Limits2}). However, there is no guarantee that such priors exist for all models. To address this issue, we examine the necessary and sufficient condition for these equations to have a solution. The following theorem applies a classical result from the theory of partial differential equations.

\begin{theorem}\label{thm:ExistenceOfPrior}
  Let $\phi:\Theta\to\mathbb{R}^{p}$ be a differentiable vector-valued function, and assume that the order of integration and differentiation in $\phi$ can be interchanged, i.e., $\int(\partial\phi(\theta)/\partial \theta)d\theta=\partial(\int\phi(\theta)d\theta)/\partial\theta$ holds. Then, a twice-differentiable prior density $\pi(\theta)$ satisfying
  \begin{equation}\label{eq:PriorCond}
    \frac{\partial\log\pi(\theta)}{\partial\theta}=\phi(\theta)
  \end{equation}
  exists if and only if the following holds:
  \begin{equation}\label{eq:ExistenceOfPrior}
    \frac{\partial\phi_{t}(\theta)}{\partial\theta_{u}}
    =\frac{\partial\phi_{u}(\theta)}{\partial\theta_{t}}
    \quad(t,u=1,\ldots,p)
    .
  \end{equation}
  We refer to \eqref{eq:ExistenceOfPrior} as the integrability condition.
\end{theorem}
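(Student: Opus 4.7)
The plan is to recognize this as an instance of the Poincaré lemma: on a rectangular (hence convex and simply connected) parameter space $\Theta$, a continuously differentiable vector field $\phi$ is the gradient of a scalar function if and only if its Jacobian matrix is symmetric. Here the scalar function in question is $\log\pi$. I would prove the two directions separately, treating necessity as an immediate application of Schwarz's theorem on mixed partials and sufficiency by explicit construction via a line integral.

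For necessity, suppose a twice-differentiable $\pi$ satisfying \eqref{eq:PriorCond} exists and is positive on $\Theta$, so that $\log\pi$ is well-defined and twice differentiable. Then $\partial\log\pi/\partial\theta_{t}=\phi_{t}$ for each $t$, and differentiating once more in $\theta_{u}$ gives $\partial\phi_{t}/\partial\theta_{u}=\partial^{2}\log\pi/\partial\theta_{u}\partial\theta_{t}$. By Schwarz's theorem on the equality of mixed partial derivatives, applicable precisely because $\pi$ is assumed twice differentiable, this equals $\partial^{2}\log\pi/\partial\theta_{t}\partial\theta_{u}=\partial\phi_{u}/\partial\theta_{t}$, which is the integrability condition \eqref{eq:ExistenceOfPrior}.

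For sufficiency, fix any reference point $\theta^{0}\in\Theta$ and, exploiting the convexity of the rectangle, define
\[
  g(\theta) = \int_{0}^{1}(\theta-\theta^{0})^{\mathrm{\scriptscriptstyle{T}}}\phi\bigl(\theta^{0}+t(\theta-\theta^{0})\bigr)\,dt.
\]
Differentiating under the integral sign in $\theta_{u}$, which is legitimate by the hypothesized interchange of integration and differentiation, yields a boundary-like term $\int_{0}^{1}\phi_{u}(\theta^{0}+t(\theta-\theta^{0}))\,dt$ plus $\int_{0}^{1} t\sum_{s}(\theta_{s}-\theta_{s}^{0})\partial\phi_{s}/\partial\theta_{u}\,dt$ evaluated along the same segment. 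Applying the integrability condition $\partial\phi_{s}/\partial\theta_{u}=\partial\phi_{u}/\partial\theta_{s}$ in the second integrand and recognizing $\sum_{s}(\theta_{s}-\theta_{s}^{0})\partial\phi_{u}/\partial\theta_{s}$ as $(d/dt)\phi_{u}(\theta^{0}+t(\theta-\theta^{0}))$ allows the two terms to be combined into $\int_{0}^{1}(d/dt)\{t\,\phi_{u}(\theta^{0}+t(\theta-\theta^{0}))\}\,dt=\phi_{u}(\theta)$. Setting $\pi(\theta)=\exp\{g(\theta)\}$ then gives a positive, twice-differentiable function (since $\phi$ is differentiable, so is $g$) satisfying \eqref{eq:PriorCond}; any positive constant multiple is also a valid prior. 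The only step where the integrability condition is actively used is this last computation, and the only mild technical point is the validity of differentiation under the integral sign, which is supplied directly by the theorem's hypothesis; consequently no substantive obstacle is anticipated.
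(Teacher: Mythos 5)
Your proof is correct, and the necessity half (Schwarz's theorem applied to $\log\pi$, or equivalently to the potential function) is essentially identical to the paper's. The sufficiency half, however, takes a genuinely different route: you use the classical Poincar\'e-lemma construction on a convex set, defining the potential as a line integral $g(\theta)=\int_{0}^{1}(\theta-\theta^{0})^{\mathrm{\scriptscriptstyle{T}}}\phi(\theta^{0}+t(\theta-\theta^{0}))\,dt$ along the segment joining a base point to $\theta$, and your computation $\partial g/\partial\theta_{u}=\int_{0}^{1}\frac{d}{dt}\{t\,\phi_{u}(\theta^{0}+t(\theta-\theta^{0}))\}\,dt=\phi_{u}(\theta)$ is valid, with the interchange of differentiation and integration covered by the theorem's hypothesis and the segment staying in $\Theta$ by convexity of the rectangle. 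The paper instead builds the potential by iterated one-dimensional integrations along a coordinate ``staircase'' path: it freezes the first $t-1$ coordinates at constants $c_{1},\ldots,c_{t-1}$, sets $\psi_{t}(\theta_{t},\ldots,\theta_{p})=\phi_{t}(c_{1},\ldots,c_{t-1},\theta_{t},\ldots,\theta_{p})$, and takes $\xi(\theta)=\sum_{t}\int_{c_{t}}^{\theta_{t}}\psi_{t}(z,\theta_{t+1},\ldots,\theta_{p})\,dz$, verifying the gradient via the integrability condition and the fundamental theorem of calculus. The trade-off is this: your homotopy argument is more standard and would extend verbatim to any star-shaped domain, whereas the paper's coordinatewise construction is what directly yields the explicit recipe of Corollary~\ref{cor:ConstructionOfPrior} used throughout the examples --- each $\phi_{t}$ only needs a one-dimensional antiderivative in $\theta_{t}$, which is what makes priors like $\pi\propto\sigma^{-4}$ or $\pi\propto\sigma^{-4}(\sigma^{2}+n\tau^{2})^{-2}$ computable in closed form, while your line-integral potential mixes all coordinates inside a single parameter integral and is less convenient for explicit evaluation. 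One minor point: in the necessity direction you add the hypothesis that $\pi$ is positive so that $\log\pi$ is well-defined; this is implicit in the statement (a density satisfying \eqref{eq:PriorCond} must be positive where the equation holds), so it is not a gap, but it is worth stating as you did.
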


When considering the existence of an asymptotically unbiased prior, the function $\phi$ in Theorem~\ref{thm:ExistenceOfPrior} corresponds to the right-hand side of either \eqref{eq:PriorCond.DepOnN} or \eqref{eq:PriorCond.IndepOfN}.

The following corollary offers a practical method for constructing an asymptotically unbiased prior. This builds on the sufficiency part of the proof of Theorem~\ref{thm:ExistenceOfPrior}.

\begin{corollary}\label{cor:ConstructionOfPrior}
  Suppose the model satisfies the assumptions of Theorem~\ref{thm:ExistenceOfPrior}. If the integrability condition \eqref{eq:ExistenceOfPrior} holds, a prior constructed using the following procedure satisfies \eqref{eq:PriorCond}.
  \begin{enumerate}
    \item Fix a constant vector $(c_{1},\ldots,c_{p})\in\Theta$ arbitrarily.
    \item Define $\psi_{t}(\theta_{t},\ldots,\theta_{p})=\phi_{t}(c_{1},\ldots,c_{t-1},\theta_{t},\ldots,\theta_{p})$ and compute
    \[
      \tilde{\pi}(\theta)
      =\exp \left\{\sum_{t=1}^{p}\int_{c_{t}}^{\theta_{t}}\psi_{t}(z,\theta_{t+1},\ldots,\theta_{p})dz\right\}
      .
    \]
    \item Take the prior as $\pi(\theta)\propto \tilde{\pi}(\theta)$.
  \end{enumerate}
\end{corollary}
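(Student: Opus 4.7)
The plan is to verify directly that $L(\theta) := \log \tilde{\pi}(\theta) = \sum_{t=1}^{p}\int_{c_{t}}^{\theta_{t}}\psi_{t}(z,\theta_{t+1},\ldots,\theta_{p})\,dz$ satisfies $\partial L/\partial \theta_{k}=\phi_{k}(\theta)$ for every $k=1,\ldots,p$, so that $\pi(\theta)\propto\tilde{\pi}(\theta)$ solves \eqref{eq:PriorCond} (the normalizing constant vanishes under $\partial/\partial\theta$ of the logarithm). Geometrically, $L(\theta)$ is the line integral of the 1-form $\sum_{k}\phi_{k}(\theta)\,d\theta_{k}$ along the piecewise axis-parallel polyline from $(c_{1},\ldots,c_{p})$ to $\theta$ that adjusts the coordinates one at a time in order; the integrability condition \eqref{eq:ExistenceOfPrior} is precisely the closedness of this form, so on the rectangular domain $\Theta$ its value is path-independent and $L$ is a bona fide potential.

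To make this concrete, I would differentiate $L$ under the integral sign (allowed by the standing hypothesis) and split the sum over $t$ into three groups according to the comparison of $t$ with $k$. For $t>k$ the variable $\theta_{k}$ does not appear at all, so the contribution vanishes. For $t=k$, the fundamental theorem of calculus in $z$ gives $\psi_{k}(\theta_{k},\ldots,\theta_{p})=\phi_{k}(c_{1},\ldots,c_{k-1},\theta_{k},\ldots,\theta_{p})$. For $t<k$, passing $\partial/\partial\theta_{k}$ inside the integral produces $\int_{c_{t}}^{\theta_{t}}(\partial\phi_{t}/\partial\theta_{k})(c_{1},\ldots,c_{t-1},z,\theta_{t+1},\ldots,\theta_{p})\,dz$; invoking \eqref{eq:ExistenceOfPrior} swaps this with $\partial\phi_{k}/\partial\theta_{t}$, and a second application of the fundamental theorem of calculus (now in the $t$-th variable, which is the integration variable) turns the integrand into an exact derivative. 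Each $t<k$ term thereby becomes the increment $\phi_{k}(c_{1},\ldots,c_{t-1},\theta_{t},\theta_{t+1},\ldots,\theta_{p})-\phi_{k}(c_{1},\ldots,c_{t-1},c_{t},\theta_{t+1},\ldots,\theta_{p})$.

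Summing these increments over $t=1,\ldots,k-1$ produces a telescoping cascade that collapses to $\phi_{k}(\theta)-\phi_{k}(c_{1},\ldots,c_{k-1},\theta_{k},\ldots,\theta_{p})$, and combining with the $t=k$ contribution yields $\phi_{k}(\theta)$ exactly. The main obstacle is essentially notational bookkeeping: one must track carefully which slot of $\phi_{t}$ is being differentiated when $k>t$ and verify that the telescoping collapses as claimed (including the edge cases $k=1$, where the $t<k$ sum is empty, and $k=p$). The only substantive mathematical ingredient beyond the fundamental theorem of calculus is the integrability condition, applied exactly once to convert $\partial\phi_{t}/\partial\theta_{k}$ into $\partial\phi_{k}/\partial\theta_{t}$ so that an antiderivative along the integration axis becomes available.
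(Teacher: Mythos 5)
Your proposal is correct and follows essentially the same route as the paper: the paper proves this as the sufficiency direction of Lemma~\ref{lem:LinerPDE}, differentiating the same coordinate-wise integral term by term, applying the integrability condition once to swap $\partial\phi_{t}/\partial\theta_{k}$ for $\partial\phi_{k}/\partial\theta_{t}$, and collapsing the resulting telescoping sum via the fundamental theorem of calculus. Your geometric gloss (potential of a closed 1-form integrated along an axis-parallel path) is a nice framing but the computation is identical.
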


The prior constructed using the procedure in Corollary~\ref{cor:ConstructionOfPrior} may often be improper, meaning that its integral over the parameter space diverges to infinity. This does not invalidate the posterior mean, as long as the posterior distribution is proper (i.e. integrable).

\begin{remark}[Comparison with probability-matching priors]
  The asymptotically unbiased prior aims at removing the $O(n^{-1})$ bias of the posterior mean, whereas the probability matching prior is designed so that posterior quantiles attain the frequentist coverage probability up to a certain order. The resulting priors are therefore generally different. For a one-dimensional parameter i.i.d. model, for instance, the first-order probability matching prior coincides with the Jeffreys prior $\pi(\theta)\propto I(\theta)^{1/2}$ \citep{welch1963formulae}. In contrast, the asymptotically unbiased prior is proportional to the Fisher information itself, $\pi(\theta)\propto I(\theta)$ (Appendix~\ref{subsec:OneParameterModels}). This distinction implies a fundamental trade-off. In general, the credible intervals under the asymptotically unbiased priors are not (first or second-order) probability-matching. Conversely, posterior means under probability matching priors typically retain an $O(n^{-1})$ bias. Consequently, the choice between these classes of priors should be driven by the inferential goal: bias reduction for point estimation versus coverage accuracy for interval estimation.
\end{remark}

\section{Examples}\label{sec:Examples}

In this section, we present examples of models where an asymptotically unbiased prior can be constructed using the procedure described above. We begin with the linear regression model, considering two different parametrizations. In both cases, the Bayes estimator under the asymptotically unbiased prior is exactly unbiased. Then, we consider the nested error regression/random effects model, which is widely used in many fields, including small area estimation. For more details and applications of the nested error regression model in small area estimation, please refer to, for example, \cite{sugasawa2023mixed} and \cite{rao2015small}.

\begin{example}[Linear regression model]\label{ex:LinearReg}
  Consider the model $y\sim N(X\beta,\sigma^{2}I_{n})$, where $y$ is an $n$-dimensional random vector of response variables, $X$ is an $n\times p$ non-random matrix of explanatory variables and $\beta\in\mathbb{R}^{p}$ is the coefficient vector. First, we consider the estimation problem of the parameter $\theta=(\beta,\sigma^{2})\in\mathbb{R}^{p}\times(0,\infty)$. As derived in Appendix~\ref{subsec:RegModel}, the asymptotically unbiased prior is $\pi(\beta,\sigma^{2})\propto\sigma^{-4}$. The posterior mean under this prior is $(\hat{\beta}^{B},\hat{\sigma}^{2,B})=(\hat{\beta}^{OLS},y^{\mathrm{\scriptscriptstyle{T}}}(y-X\hat{\beta}^{OLS})/(n-p))$ for $n>p$, where $\hat{\beta}^{OLS}=(X^{\mathrm{\scriptscriptstyle{T}}}X)^{-1}X^{\mathrm{\scriptscriptstyle{T}}}y$ is the ordinary least squares estimator. It is an exactly unbiased estimator of $(\beta,\sigma^{2})$.

  This unbiasedness, however, is specific to the parameterization. For example, if the primary inferential goal is to estimate an $\alpha$-quantile of a new observation, such as $q_{\alpha}=x_{new}^{\mathrm{\scriptscriptstyle{T}}}\beta+z_{\alpha}\sigma$ with $z_{\alpha}$ being the $\alpha$-quantile of the standard normal distribution, the prior $\pi(\beta,\sigma^{2})\propto\sigma^{-4}$ leads to a second-order biased estimator of $q_{\alpha}$. To obtain an unbiased estimator for the quantile, one can instead consider the parameterization $\theta'=(\beta,\sigma)\in\mathbb{R}^{p}\times(0,\infty)$. For this choice, the asymptotically unbiased prior is $\pi(\beta,\sigma)\propto\sigma^{-2}$. This prior leads to a Bayes estimator that is exactly unbiased for $(\beta,\sigma)$ and, by linearity, the quantile $q_{\alpha}$. The full derivation and the explicit form of the estimator are given in Appendix~\ref{subsec:RegModel}.

  This example illustrates a key principle: the choice of parameterization and the resulting asymptotically unbiased prior should be guided by the specific inferential goal of the analysis. While the prior $\pi(\beta,\sigma^{2})\propto\sigma^{-4}$ leads to an unbiased estimator of $(\beta,\sigma^{2})$, it yields a biased estimator of the quantile $q_{\alpha}$. In contrast, the prior $\pi(\beta,\sigma)\propto\sigma^{-2}$ ensures the Bayes estimator of $q_{\alpha}$ is unbiased.
\end{example}

\begin{example}[Balanced nested error regression model]\label{ex:NER.CommonN}
  We consider the balanced nested error regression model $y_{ij}=x_{ij}^{\mathrm{\scriptscriptstyle{T}}}\beta+v_{i}+\epsilon_{ij}\ (i=1,\ldots,m;\ j=1,\ldots,n)$, where $y_{ij}$ is the response variable for each unit $j$ in area $i$, $x_{ij}$ is a $p$-dimensional non-random covariate vector, $\beta\in\mathbb{R}^{p}$ is the coefficient vector. The random efecrts for area $i$, $v_{i}$, and the error term, $\epsilon_{ij}$, are assumed to be independent and follow normal distributions $v_{i}\sim N(0,\tau^{2})$ and $\epsilon_{ij}\sim N(0,\sigma^{2})$, respectively. We consider the asymptotic setting where $m\to\infty$, with $n$ fixed. The parameter of interest is $\theta=(\theta_{1},\ldots,\theta_{p},\theta_{p+1},\theta_{p+2})=(\beta,\tau^{2},\sigma^{2})\in\Theta=\mathbb{R}^{p}\times(0,\infty)^{2}$. By Appendix~\ref{subsec:RegModel}, we obtain an asymptotically unbiased prior as
  \begin{equation}\label{eq:NER.Prior.CommonN}
    \pi(\theta)\propto\sigma^{-4}(\sigma^{2}+n\tau^{2})^{-2}
    .
  \end{equation}
  The posterior propriety of the prior in \eqref{eq:NER.Prior.CommonN} and a description of a Gibbs sampler for posterior sampling is discussed in Appendix~\ref{sec:DiscussNER}.
\end{example}

\section{Simulation studies on the nested error regression model}\label{sec:Simulation}

Most of the examples presented in Section~\ref{sec:Examples} and Appendix~\ref{sec:Examlpes.Appendix} yield analytical posterior distributions and exactly unbiased Bayes estimators. However, for the nested error regression model, deriving an analytical posterior distribution is not feasible, and consequently, the properties of the Bayes estimator remain unknown. We therefore conduct simulation studies to evaluate the frequentist properties of our proposed prior in this model, particularly focusing on the small-sample (i.e. small number of areas/groups) performance common in applications like small area estimation. We compare the performance of the proposed asymptotically unbiased prior with Jeffreys' prior for the variance components and the hierarchical prior proposed by \cite{datta1991bayesian}. The full details of the simulation design including the precise specification of the priors, and MCMC settings are provided in Appendix~\ref{app:Simulation}. All simulation code is available at \url{https://github.com/manasakai/second-order-unbiased-NER}.

\begin{figure}
\centering
\begin{subfigure}{0.24\textwidth}
\begin{tikzpicture}
\begin{axis}[
  title={(a) $\tau^{2}$},
  xlabel={\(\log_{10}(m)\)},
  xmode=log, log basis x=10,
  xtick={10,100,1000}, xticklabels={2,3,4},
  yticklabel style={/pgf/number format/fixed},
  width=4cm, height=4cm,
  mark size=1pt,
  font=\footnotesize,
  xlabel style={yshift=1ex},
  title style={yshift=-0.5ex}
]
\addplot[black, solid ,mark=*, mark size=1pt] coordinates {(10,0.003485) (32,0.001454) (100,0.000121) (316,0.000125) (1000,0.000267)};
\addplot[black, dashed,mark=square*, mark size=1pt] coordinates {(10,0.315819) (32,0.077155) (100,0.023542) (316,0.007469) (1000,0.002039)};
\addplot[black, dotted,mark=triangle*, mark size=1pt] coordinates {(10,0.128341) (32,0.053886) (100,0.018344) (316,0.005917) (1000,0.001560)};
\end{axis}
\end{tikzpicture}
\end{subfigure}
\begin{subfigure}{0.24\textwidth}
\begin{tikzpicture}
\begin{axis}[
  title={(b) $\sigma^{2}$},
  xlabel={\(\log_{10}(m)\)},
  xmode=log, log basis x=10,
  xtick={10,100,1000}, xticklabels={2,3,4},
  yticklabel style={/pgf/number format/fixed},
  width=4cm, height=4cm,
  mark size=1pt,
  font=\footnotesize,
  xlabel style={yshift=1ex},
  title style={yshift=-0.5ex}
]
\addplot[black, solid ,mark=*, mark size=1pt] coordinates {(10,0.002813) (32,0.000476) (100,0.000315) (316,0.000304) (1000,0.000048)};
\addplot[black, dashed,mark=square*, mark size=1pt] coordinates {(10,0.051696) (32,0.016431) (100,0.005349) (316,0.001281) (1000,0.000548)};
\addplot[black, dotted,mark=triangle*, mark size=1pt] coordinates {(10,0.041068) (32,0.015405) (100,0.005803) (316,0.001539) (1000,0.000648)};
\end{axis}
\end{tikzpicture}
\end{subfigure}
\begin{subfigure}{0.24\textwidth}
\begin{tikzpicture}
\begin{axis}[
  title={(c) $\tau^{2}$},
  xlabel={\(\log_{10}(m)\)},
  xmode=log, log basis x=10,
  xtick={10,100,1000}, xticklabels={2,3,4},
  yticklabel style={/pgf/number format/fixed},
  width=4cm, height=4cm,
  mark size=1pt,
  font=\footnotesize,
  xlabel style={yshift=1ex},
  title style={yshift=-0.5ex}
]
\addplot[black, solid ,mark=*, mark size=1pt] coordinates {(10,0.264572) (32,0.059093) (100,0.007575) (316,0.000974) (1000,0.000570)};
\addplot[black, dashed,mark=square*, mark size=1pt] coordinates {(10,0.583956) (32,0.129688) (100,0.029884) (316,0.008019) (1000,0.001637)};
\addplot[black, dotted,mark=triangle*, mark size=1pt] coordinates {(10,0.535357) (32,0.353673) (100,0.196900) (316,0.088244) (1000,0.032091)};
\end{axis}
\end{tikzpicture}
\end{subfigure}
\begin{subfigure}{0.24\textwidth}
\begin{tikzpicture}
\begin{axis}[
  title={(d) $\sigma^{2}$},
  xlabel={\(\log_{10}(m)\)},
  xmode=log, log basis x=10,
  xtick={10,100,1000}, xticklabels={2,3,4},
  yticklabel style={/pgf/number format/fixed},
  width=4cm, height=4cm,
  mark size=1pt,
  font=\footnotesize,
  xlabel style={yshift=1ex},
  title style={yshift=-0.5ex}
]
\addplot[black, solid ,mark=*, mark size=1pt] coordinates {(10,0.248593) (32,0.058750) (100,0.001317) (316,0.001271) (1000,0.000523)};
\addplot[black, dashed,mark=square*, mark size=1pt] coordinates {(10,0.006103) (32,0.013341) (100,0.019431) (316,0.005069) (1000,0.002524)};
\addplot[black, dotted,mark=triangle*, mark size=1pt] coordinates {(10,0.622056) (32,0.269144) (100,0.116380) (316,0.051173) (1000,0.017641)};
\end{axis}
\end{tikzpicture}
\end{subfigure}
\caption{Absolute bias of the posterior mean of $\tau^{2}$ and $\sigma^{2}$ when $n=5$: (1) the asymptotically unbiased prior (solid); (2) Jeffreys' prior for the variance components (dashed); (3) the prior of \cite{datta1991bayesian} (dotted). The true parameter values are (i) $\beta_{1}=\beta_{2}=\tau^{2}=\sigma^{2}=1$ for plots (a) and (b), and (ii) $\beta_{1}=\beta_{2}=1,\ \tau^{2}=0.5,\ \sigma^{2}=4$ for plots (c) and (d).}
  \label{fig:AbsBias}
\end{figure}

Figure~\ref{fig:AbsBias} shows the absolute bias of the posterior mean of the variance components $\tau^{2}$ and $\sigma^{2}$. The choice of prior strongly influences the bias, especially for small sample sizes ($m=10,\ 32$). The proposed prior exhibits smaller bias for $\tau^{2}$ under both parameter settings. For $\sigma^{2}$, Jeffreys' prior can be advantageous in certain settings. These findings suggest that while the asymptotically unbiased prior provides robust performance for variance components in general, alternative priors, such as Jeffreys' prior, may excel in specific parameter settings. We note that the bias of $\beta$ remains relatively small and stable across priors and sample sizes, as shown in Appendix~\ref{app:Simulation}. It also provides the evaluation of the mean squared error and the coverage probability of the 95\% credible interval.

\section*{Acknowledgements}

We would like to thank Kaoru Irie, Shonosuke Sugasawa, and Shintaro Hashimoto for their valuable comments on the content of this article. We also thank Tomoya Wakayama for his assistance in improving the simulation code. Takeru Matsuda was supported by JSPS KAKENHI Grant Numbers 21H05205, 22K17865, 24K02951 and JST Moonshot Grant Number JPMJMS2024. Tatsuya Kubokawa was supported by JSPS KAKENHI Grant Number 22K11928.

\vfill
\pagebreak

\appendix

\numberwithin{equation}{section}
\numberwithin{theorem}{section}
\numberwithin{corollary}{section}
\numberwithin{lemma}{section}
\numberwithin{proposition}{section}
\numberwithin{assumption}{section}
\numberwithin{remark}{section}
\numberwithin{example}{section}

\section{Proofs of the main theoretical results}\label{sec:Proofs}

\subsection{Proof of Theorem~\texorpdfstring{\ref{thm:BayesEst.True.Diff}}{1}}\label{subsubsec:Prf_thm:BayesEst.True.Diff}

The proof strategy is to evaluate $\hat{\theta}^{B}-\hat{\theta}^{ML}$ and $\hat{\theta}^{ML}-\theta$ separately, and combine them to obtain the desired decomposition.

We sometimes include dots instead of indices $r,s,t=1,\ldots,p$ to denote a vector or a matrix composed of $K_{n,rst}$. For example, we write
\[
  K_{n,r\cdot t}(\theta)
  =\begin{bmatrix}
    K_{n,r1t}(\theta)\\
    \vdots\\
    K_{n,rpt}(\theta)
  \end{bmatrix}
  ,\quad
  K_{n,r\cdot\cdot}(\theta)
  =\begin{bmatrix}
    K_{n,r11}(\theta)&\cdots&K_{n,r1p}(\theta)\\
    \vdots&\ddots&\vdots\\
    K_{n,rp1}(\theta)&\cdots&K_{n,rpp}(\theta)
  \end{bmatrix}
  .
\]

We first evaluate $\hat{\theta}^{B}-\hat{\theta}^{ML}$ using Laplace's method.

\begin{lemma}\label{lem:BayesEst.ML.Diff}
  Suppose the model satisfies Assumption~\ref{asm:Limits1}. Then, we can decompose the difference between the Bayes estimator and the maximum likelihood estimator as
  \begin{equation}\label{eq:BayesEst.ML.Diff}
    \hat{\theta}^{B}-\hat{\theta}^{ML}
    =n^{-1}H_{n}^{-1}(\theta)\left\{\frac{\partial\log\pi(\theta)}{\partial\theta}+\frac{1}{2}\sum_{r=1}^{p}\sum_{s=1}^{p}H_{n}^{rs}(\theta)K_{n,\cdot rs}(\theta)\right\}+o_{p}(n^{-1})
    .
  \end{equation}
\end{lemma}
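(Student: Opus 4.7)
The plan is to evaluate the numerator $\int(\theta-\hat{\theta}^{ML})\pi(\theta)\exp\{\ell_{n}(\theta)\}\,d\theta$ and the denominator $\int\pi(\theta)\exp\{\ell_{n}(\theta)\}\,d\theta$ of $\hat{\theta}^{B}-\hat{\theta}^{ML}$ separately by Laplace's method. The first step is the change of variables $u=\sqrt{n}(\theta-\hat{\theta}^{ML})$, after which both integrands are dominated by a near-Gaussian kernel in $u$ of $O(1)$ width, and corrections at each successive order of $n^{-1/2}$ can be read off mechanically.

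The core of the argument is a Taylor expansion about $\hat{\theta}^{ML}$. Since the score vanishes at the MLE, the log-likelihood expansion carries no linear term:
\[
  \ell_{n}(\hat{\theta}^{ML}+u/\sqrt{n})-\ell_{n}(\hat{\theta}^{ML})
  =-\frac{1}{2}u^{\mathrm{\scriptscriptstyle{T}}}H_{n}(\hat{\theta}^{ML})u
  +\frac{1}{6\sqrt{n}}\sum_{r,s,t=1}^{p}K_{n,rst}(\hat{\theta}^{ML})u_{r}u_{s}u_{t}+R_{n}(u),
\]
while the prior admits the first-order expansion
\[
  \pi(\hat{\theta}^{ML}+u/\sqrt{n})
  =\pi(\hat{\theta}^{ML})\left\{1+\frac{1}{\sqrt{n}}\sum_{r=1}^{p}\frac{\partial\log\pi(\hat{\theta}^{ML})}{\partial\theta_{r}}u_{r}+o_{p}(n^{-1/2})\right\}.
\]
Exponentiating the cubic correction and collecting contributions through order $n^{-1/2}$, both integrals reduce to Gaussian expectations against $\exp\{-u^{\mathrm{\scriptscriptstyle{T}}}H_{n}(\hat{\theta}^{ML})u/2\}$ weighted by polynomials in $u$. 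Writing $W=H_{n}^{-1}(\hat{\theta}^{ML})$, I would evaluate these using the standard Gaussian moment identities, in particular $E[u_{j}u_{r}]=W_{jr}$, $E[u_{j}u_{r}u_{s}u_{t}]=W_{jr}W_{st}+W_{js}W_{rt}+W_{jt}W_{rs}$, together with the vanishing of odd-order moments.

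All $O(n^{-1/2})$ corrections in the denominator are odd in $u$ and thus integrate to zero, leaving $1+O_{p}(n^{-1})$ times the Gaussian normalising constant. In the $j$-th component of the numerator the extra factor $u_{j}/\sqrt{n}$ activates the linear prior correction and the cubic log-likelihood correction; the full symmetry of $K_{n,rst}$ in its three indices collapses the three Isserlis contractions into $3\sum_{r,s,t}K_{n,rst}W_{jr}W_{st}$. Dividing the numerator by the denominator and regrouping yields
\[
  \hat{\theta}^{B}-\hat{\theta}^{ML}
  =n^{-1}H_{n}^{-1}(\hat{\theta}^{ML})\left\{\frac{\partial\log\pi(\hat{\theta}^{ML})}{\partial\theta}+\frac{1}{2}\sum_{r,s=1}^{p}H_{n}^{rs}(\hat{\theta}^{ML})K_{n,\cdot rs}(\hat{\theta}^{ML})\right\}+o_{p}(n^{-1}).
\]
I would then invoke $\hat{\theta}^{ML}-\theta=O_{p}(n^{-1/2})$ from Assumption~\ref{asm:Limits1}(i) and continuity of the factors inside the braces (guaranteed by Assumption~\ref{asm:Limits1}(iii)--(v)) to move every evaluation from $\hat{\theta}^{ML}$ to $\theta$; the resulting perturbation of an $O_{p}(1)$ quantity is absorbed into the $o_{p}(n^{-1})$ remainder thanks to the prefactor $n^{-1}$.

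The hardest part will be rigorously justifying the Laplace approximation under the comparatively mild conditions in Assumption~\ref{asm:Limits1}: one must argue that the contribution from outside a shrinking neighbourhood of $\hat{\theta}^{ML}$ is negligible, and that the third-order Taylor remainder $R_{n}(u)$, once multiplied by the Gaussian envelope and integrated, genuinely contributes only $o_{p}(n^{-1})$. Handling the prior expansion also requires care, since Assumption~\ref{asm:Limits1}(iv) only posits differentiability of $\pi$; a mean-value argument using the continuous gradient suffices, but the $o_{p}(n^{-1/2})$ remainder in the prior expansion must be controlled uniformly enough on the effective range of $u$ to survive integration against $\exp\{-u^{\mathrm{\scriptscriptstyle{T}}}H_{n}u/2\}$.
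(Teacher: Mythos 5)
Your proposal is correct and follows essentially the same route as the paper: a Laplace-type expansion of the numerator and denominator around $\hat{\theta}^{ML}$ (no linear term since the score vanishes), a first-order expansion of the prior, Gaussian moment identities for the linear and cubic corrections, and a final transfer of the evaluation point from $\hat{\theta}^{ML}$ to $\theta$ via Assumption~\ref{asm:Limits1}(i). The only cosmetic difference is your isotropic rescaling $u=\sqrt{n}(\theta-\hat{\theta}^{ML})$ with covariance $H_{n}^{-1}(\hat{\theta}^{ML})$, whereas the paper standardizes with the symmetric square root $\hat{G}_{n}$ of $n H_{n}(\hat{\theta}^{ML})$ to get an identity-covariance Gaussian; the resulting computations are identical.
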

\begin{proof}
  Noting that $\pi(\theta)$ is null if $\theta\notin\Theta$, we can write the Bayes estimator as
  \[
    \hat{\theta}^{B}
    =\frac{\int_{\mathbb{R}^{p}}\theta\pi(\theta)\exp\{\ell_{n}(\theta)\}d\theta}{\int_{\mathbb{R}^{p}}\pi(\theta)\exp\{\ell_{n}(\theta)\}d\theta}
    .
  \]
  Here, we can expand $\pi(\theta)$ around $\hat{\theta}^{ML}$ as
  \[
    \pi(\theta)
    =\pi(\hat{\theta}^{ML})+\dot{\pi}(\hat{\theta}^{ML})^{\mathrm{\scriptscriptstyle{T}}}(\theta-\hat{\theta}^{ML})+O_{p}(n^{-1})
    ,
  \]
  where $\dot{\pi}(\hat{\theta}^{ML})$ is the first order derivative of $\pi(\theta)$ at $\hat{\theta}^{ML}$. Similarly, we can expand $\ell_{n}(\theta)$ around $\hat{\theta}^{ML}$ as
  \[
    \ell_{n}(\theta)
    =\ell_{n}(\hat{\theta}^{ML})-\frac{n}{2}(\theta-\hat{\theta}^{ML})^{\mathrm{\scriptscriptstyle{T}}}H_{n}(\hat{\theta}^{ML})(\theta-\hat{\theta}^{ML})+\frac{1}{6}\left\{\sum_{r=1}^{p}(\theta_{r}-\hat{\theta}_{r}^{ML})\frac{\partial}{\partial\theta_{r}}\right\}^{3}\ell_{n}(\hat{\theta}^{ML})+O_{p}(n^{-1})
    ,
  \]
  where $\partial\ell_{n}(\hat{\theta}^{ML})/\partial\theta=0$ is applied to eliminate the first-order term. By the positive definiteness of $\hat{H}_{n}\equiv H_{n}(\hat{\theta}^{ML})$, there exists a symmetric and positive definite matrix $\hat{G}_{n}\equiv G_{n}(\hat{\theta}^{ML})$ that satisfies $n\hat{H}_{n}=\hat{G}_{n}^{2}$. Define $\eta=\hat{G}_{n}(\theta-\hat{\theta}^{ML})$, and for simplicity, we write $\pi(\hat{\theta}^{ML})$ as $\hat{\pi}$. Then, we can evaluate the Bayes estimator as
  \begin{align}
    \notag
    &\hat{\theta}^{B}-\hat{\theta}^{ML}\\
    \label{eq:BayesEst.ML.Diff.Form1}
    &=\hat{G}_{n}^{-1}\frac{\int_{\mathbb{R}^{p}}\eta\left\{\hat{\pi}+\dot{\pi}(\hat{\theta}^{ML})^{\mathrm{\scriptscriptstyle{T}}}\hat{G}_{n}^{-1}\eta+O_{p}(n^{-1})\right\}\exp\left\{-2^{-1}\eta^{\mathrm{\scriptscriptstyle{T}}}\eta+g(\eta)+O_{p}(n^{-1})\right\}d\eta}{\int_{\mathbb{R}^{p}}\left\{\hat{\pi}+\dot{\pi}(\hat{\theta}^{ML})^{\mathrm{\scriptscriptstyle{T}}}\hat{G}_{n}^{-1}\eta+O_{p}(n^{-1})\right\}\exp\left\{-2^{-1}\eta^{\mathrm{\scriptscriptstyle{T}}}\eta+g(\eta)+O_{p}(n^{-1})\right\}d\eta}
    ,
  \end{align}
  where we defined $g(\eta)=6^{-1}(\sum_{r=1}^{p}\sum_{s=1}^{p}\hat{G}_{n}^{rs}\eta_{s}\partial/\partial\theta_{r})^{3}\ell_{n}(\theta)\mid_{\theta=\hat{\theta}^{ML}}$. Noting that expanding $\exp\{g(\eta)+O_{p}(n^{-1})\}$ yields $\exp\{g(\eta)+O_{p}(n^{-1})\}=1+g(\eta)+O_{p}(n^{-1})$, we have
  \begin{align*}
    \{\hat{\pi}+\dot{\pi}(\hat{\theta}^{ML})^{\mathrm{\scriptscriptstyle{T}}}\hat{G}_{n}^{-1}\eta+O_{p}(n^{-1})\}\exp\{g(\eta)+O_{p}(n^{-1})\}
    =\hat{\pi}+\dot{\pi}(\hat{\theta}^{ML})^{\mathrm{\scriptscriptstyle{T}}}\hat{G}_{n}^{-1}\eta+\hat{\pi}g(\eta)+O_{p}(n^{-1})
    .
  \end{align*}
  Thus, \eqref{eq:BayesEst.ML.Diff.Form1} can be written as
  \begin{align}
    \notag
    &\hat{\theta}^{B}-\hat{\theta}^{ML}\\
    \notag
    &=\hat{G}_{n}^{-1}\frac{\int_{\mathbb{R}^{p}}\eta\exp(-2^{-1}\eta^{\mathrm{\scriptscriptstyle{T}}}\eta)\{\dot{\pi}(\hat{\theta}^{ML})^{\mathrm{\scriptscriptstyle{T}}}\hat{G}_{n}^{-1}\eta+\hat{\pi}g(\eta)+O_{p}(n^{-1})\}d\eta}{(2\pi)^{p/2}\hat{\pi}+O_{p}(n^{-1})}\\
    \notag
    &=\frac{\hat{G}_{n}^{-1}}{\hat{\pi}}\int_{\mathbb{R}^{p}}\frac{\eta\exp(-2^{-1}\eta^{\mathrm{\scriptscriptstyle{T}}}\eta)}{(2\pi)^{p/2}}\left\{\dot{\pi}(\hat{\theta}^{ML})^{\mathrm{\scriptscriptstyle{T}}}\hat{G}_{n}^{-1}\eta+\hat{\pi}g(\eta)+O_{p}(n^{-1})\right\}d\eta+o_{p}(n^{-1})\\
    \notag
    &=\frac{\hat{G}_{n}^{-1}}{\hat{\pi}}\left\{\int_{\mathbb{R}^{p}}\frac{\eta\eta^{\mathrm{\scriptscriptstyle{T}}}\exp(-2^{-1}\eta^{\mathrm{\scriptscriptstyle{T}}}\eta)}{(2\pi)^{p/2}}d\eta\right\}\hat{G}_{n}^{-1}\dot{\pi}(\hat{\theta}^{ML})+\hat{G}_{n}^{-1}\int_{\mathbb{R}^{p}}\frac{\eta\exp(-2^{-1}\eta^{\mathrm{\scriptscriptstyle{T}}}\eta)}{(2\pi)^{p/2}}g(\eta)d\eta+o_{p}(n^{-1})\\
    \label{eq:BayesEst.ML.Diff.Form2}
    &=n^{-1}\hat{H}_{n}^{-1}\frac{\partial\log\pi(\hat{\theta}^{ML})}{\partial\theta}+\hat{G}_{n}^{-1}\int_{\mathbb{R}^{p}}\frac{\eta\exp(-2^{-1}\eta^{\mathrm{\scriptscriptstyle{T}}}\eta)}{(2\pi)^{p/2}}g(\eta)d\eta+o_{p}(n^{-1})
    .
  \end{align}
  The third equality follows from the symmetry of $\hat{G}_{n}^{-1}$, and the last equality follows from $\int_{\mathbb{R}^{p}}(2\pi)^{-p/2}\eta\eta^{\mathrm{\scriptscriptstyle{T}}}\exp\left(-2^{-1}\eta^{\mathrm{\scriptscriptstyle{T}}}\eta\right)d\eta$ being the identity matrix. The integral in the second term of \eqref{eq:BayesEst.ML.Diff.Form2} is calculated as $2^{-1}\hat{G}_{n}^{-1}\sum_{r=1}^{p}\sum_{s=1}^{p}\hat{H}_{n}^{rs}K_{n,\cdot rs}(\hat{\theta}^{ML})$ by the definition of $g(\eta)$. Thus, we obtain
  \begin{equation}\label{eq:BayesEst.ML.Diff.Form3}
    \hat{\theta}^{B}-\hat{\theta}^{ML}
    =n^{-1}\hat{H}_{n}^{-1}\left\{\frac{\partial\log\pi(\theta)}{\partial\theta}+\frac{1}{2}\sum_{r=1}^{p}\sum_{s=1}^{p}\hat{H}_{n}^{rs}K_{n,\cdot rs}(\hat{\theta}^{ML})\right\}+o_{p}(n^{-1})
    .
  \end{equation}
  By conditions (i) and (ii) of Assumption~\ref{asm:Limits1}, $\hat{H}_{n}$ can be approximated as $\hat{H}_{n}=H_{n}(\theta)+O_{p}(n^{-1/2})$ and consequently by condition (ii) of Assumption~\ref{asm:Limits1}, we have $\hat{H}_{n}^{-1}=H_{n}^{-1}(\theta)+O_{p}(n^{-1/2})$. Similarly, by conditions (i) and (ii) of Assumption~\ref{asm:Limits1}, we can approximate $K_{n,\cdot rs}(\hat{\theta}^{ML})$ as $K_{n,\cdot rs}(\hat{\theta}^{ML})=K_{n,\cdot rs}(\theta)+O_{p}(n^{-1/2})$. Substituting these approximations into \eqref{eq:BayesEst.ML.Diff.Form3}, we obtain \eqref{eq:BayesEst.ML.Diff}.
\end{proof}

Next, we evaluate $\hat{\theta}^{ML}-\theta$.

\begin{lemma}
  Under Assumption~\ref{asm:Limits1}, the difference between the maximum likelihood estimator and the true parameter can be approximated as
  \[
    \hat{\theta}^{ML}-\theta
    =n^{-1}H_{n}^{-1}(\theta)\left\{2\frac{\partial\ell_{n}(\theta)}{\partial\theta}+\frac{1}{2}\sum_{r=1}^{p}\sum_{s=1}^{p}H_{n}^{rs}(\theta)B_{n,rs}(\theta)\right\}+o_{p}(n^{-1})
    ,
  \]
  where $B_{n,rs}(\theta)$ is defined as
  \[
    B_{n,rs}(\theta)
    =2
    \begin{bmatrix}
      J_{n,1r,s}(\theta)\\
      \vdots\\
      J_{n,pr,s}(\theta)
    \end{bmatrix}
    +\sum_{t=1}^{p}\sum_{u=1}^{p}H_{n}^{tu}(\theta)I_{su}(\theta)K_{n,r\cdot t}(\theta)
    .
  \]
\end{lemma}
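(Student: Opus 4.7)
The plan is to Taylor-expand the score equation $\partial\ell_{n}(\hat{\theta}^{ML})/\partial\theta=0$ about the true $\theta$ and solve iteratively for $\Delta\equiv\hat{\theta}^{ML}-\theta$. Writing $U=\partial\ell_{n}(\theta)/\partial\theta$ for brevity and using Assumption~\ref{asm:Limits1}(iii), a componentwise expansion together with the definitions of $H_{n}$ and $K_{n}$ gives
\[
0=U_{v}-n\sum_{r}H_{n,vr}(\theta)\Delta_{r}+\frac{n}{2}\sum_{r,s}K_{n,vrs}(\theta)\Delta_{r}\Delta_{s}+R_{v},
\]
where $R_{v}$ is a third-order Taylor remainder whose contribution to $\Delta$ is $o_{p}(n^{-1})$, and $\Delta=O_{p}(n^{-1/2})$ by Assumption~\ref{asm:Limits1}(i). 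Dividing by $n$ and applying $H_{n}^{-1}(\theta)$ gives, at leading order, $\Delta_{1}=n^{-1}H_{n}^{-1}(\theta)U=O_{p}(n^{-1/2})$.

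Next, I would substitute $\Delta_{1}$ into the quadratic term and use the identity $U_{a}U_{b}=nI_{n,ab}(\theta)$ to obtain $(\Delta_{1})_{r}(\Delta_{1})_{s}=n^{-1}\sum_{a,b}H_{n}^{ra}H_{n}^{sb}I_{n,ab}+o_{p}(n^{-1})$. Collecting terms yields the preliminary form
\[
\Delta_{v}=\frac{1}{n}\bigl(H_{n}^{-1}U\bigr)_{v}+\frac{1}{2n}\sum_{w}H_{n}^{vw}\sum_{r,s,a,b}K_{n,wrs}H_{n}^{ra}H_{n}^{sb}I_{n,ab}+o_{p}(n^{-1}).
\]

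To reach the stated form, I would invoke the identity $J_{n,wr,s}(\theta)=-H_{n,wr}(\theta)\,U_{s}$, which follows immediately from $\partial^{2}\ell_{n}/\partial\theta_{w}\partial\theta_{r}=-nH_{n,wr}$ together with the definition of $J_{n,wr,s}$. Contracting with $H_{n}^{rs}$ and using $\sum_{r}H_{n,wr}H_{n}^{rs}=\delta_{ws}$ gives $\sum_{r,s}H_{n}^{rs}J_{n,wr,s}=-U_{w}$, so that $2U_{w}+\sum_{r,s}H_{n}^{rs}J_{n,wr,s}=U_{w}$. This identity lets me rewrite the single $(H_{n}^{-1}U)$ appearing in the preliminary form as $2(H_{n}^{-1}U)$ plus the $J$-part of $\tfrac{1}{2}\sum_{r,s}H_{n}^{rs}B_{n,rs}$. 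The remaining piece of $B_{n,rs}$ then absorbs the quadratic correction above, once I invoke the full symmetry $K_{n,wrt}=K_{n,rwt}$ of the third-derivative tensor to align the index patterns.

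The main obstacle is this last algebraic reorganization: the iterative Taylor inversion naturally produces a single $(H_{n}^{-1}U)$ term, whereas the lemma presents a coefficient of $2$ balanced by the $J$-terms tucked inside $B_{n,rs}$. Once the identity $J_{n,wr,s}=-H_{n,wr}U_{s}$ is spotted, the reconciliation is just index bookkeeping made transparent by the symmetry of $K_{n,\cdot\cdot\cdot}$.
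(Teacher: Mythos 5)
Your proposal is correct and follows essentially the same route as the paper's proof: Taylor-expand the score equation around the true $\theta$, invert, and substitute the first-order term $n^{-1}H_{n}^{-1}(\theta)\,\partial\ell_{n}(\theta)/\partial\theta$ into the quadratic correction using $U_{a}U_{b}=nI_{n,ab}(\theta)$, with the remainder handled at the same level of rigor as the paper's $o_{p}(1)$ term. The only difference is organizational: the paper produces the ``$2\,\partial\ell_{n}/\partial\theta$ plus $J_{n}$-terms'' structure directly by writing $I_{p}=2I_{p}-H_{n}(\theta)H_{n}^{-1}(\theta)$ inside its Neumann expansion, whereas you first derive the plain expansion and then split the score using the same identity $J_{n,vr,s}(\theta)=-H_{n,vr}(\theta)\,\partial\ell_{n}(\theta)/\partial\theta_{s}$ contracted with $H_{n}^{rs}(\theta)$; your index bookkeeping for the $K$--$I$ correction (with $I_{su}$ read as $I_{n,su}$, as the paper evidently intends) checks out, so the two arguments coincide.
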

\begin{proof}
  By expanding $\partial\ell_{n}(\hat{\theta}^{ML})/\partial\theta=0$ around the true parameter, we obtain
  \begin{equation}\label{eq:ScoreEq.Expansion}
    0
    =\frac{\partial\ell_{n}(\theta)}{\partial\theta}-n\left\{H_{n}(\theta)-\frac{1}{2}\sum_{r=1}^{p}(\hat{\theta}_{r}^{ML}-\theta_{r})K_{n,r\cdot\cdot}(\theta)\right\}(\hat{\theta}^{ML}-\theta)+o_{p}(1)
    ,
  \end{equation}
  or equivalently,
  \begin{equation}\label{eq:ML.True.Diff.Form1}
    \hat{\theta}^{ML}-\theta
    =n^{-1}\left\{H_{n}(\theta)-\frac{1}{2}\sum_{r=1}^{p}(\hat{\theta}_{r}^{ML}-\theta_{r})K_{n,r\cdot\cdot}(\theta)\right\}^{-1}\left\{\frac{\partial\ell_{n}(\theta)}{\partial\theta}+o_{p}(1)\right\}
    .
  \end{equation}
  By conditions (i) and (ii) of Assumption~\ref{asm:Limits1}, we have
  \begin{align}
    \notag
    &H_{n}(\theta)\left\{H_{n}(\theta)-\frac{1}{2}\sum_{r=1}^{p}(\hat{\theta}_{r}^{ML}-\theta_{r})K_{n,r\cdot\cdot}(\theta)\right\}^{-1}\\
    \notag
    &=\left\{I_{p}-\frac{1}{2}\sum_{r=1}^{p}(\hat{\theta}_{r}^{ML}-\theta_{r})K_{n,r\cdot\cdot}(\theta)H_{n}^{-1}(\theta)\right\}^{-1}\\
    \notag
    &=I_{p}+\frac{1}{2}\sum_{r=1}^{p}(\hat{\theta}_{r}^{ML}-\theta_{r})K_{n,r\cdot\cdot}(\theta)H_{n}^{-1}(\theta)+O_{p}(n^{-1})\\
    \label{eq:ML.True.Diff.Int1}
    &=2I_{p}-H_{n}(\theta)H_{n}^{-1}(\theta)+\frac{1}{2}\sum_{r=1}^{p}(\hat{\theta}_{r}^{ML}-\theta_{r})K_{n,r\cdot\cdot}(\theta)H_{n}^{-1}(\theta)+O_{p}(n^{-1})
    .
  \end{align}
  Furthermore, conditions (i) and (ii) of Assumption~\ref{asm:Limits1} and \eqref{eq:ScoreEq.Expansion} imply
  \[
    0
    =\frac{\partial\ell_{n}(\theta)}{\partial\theta}-nH_{n}(\theta)(\hat{\theta}^{ML}-\theta)+O_{p}(1)
    .
  \]
  This can be rewritten as
  \[
    \hat{\theta}^{ML}-\theta
    =n^{-1}H_{n}^{-1}(\theta)\frac{\partial\ell_{n}(\theta)}{\partial\theta}+O_{p}(n^{-1})
    ,
  \]
  or equivalently, for each $r=1,\ldots,p$,
  \begin{equation}\label{eq:ML.True.Diff.Int2}
    \hat{\theta}_{r}^{ML}-\theta_{r}
    =n^{-1}\sum_{s=1}^{p}H_{n}^{rs}(\theta)\frac{\partial\ell_{n}(\theta)}{\partial\theta_{s}}+O_{p}(n^{-1})
    .
  \end{equation}
  Combining \eqref{eq:ML.True.Diff.Int2} with \eqref{eq:ML.True.Diff.Form1} and \eqref{eq:ML.True.Diff.Int1}, we have
  \[
    nH_{n}(\theta)(\hat{\theta}^{ML}-\theta)
    =2\frac{\partial\ell_{n}(\theta)}{\partial\theta}+\frac{1}{2}\sum_{r=1}^{p}\sum_{s=1}^{p}H_{n}^{rs}(\theta)B_{n,rs}(\theta)+o_{p}(1)
    .
  \]
  This completes the proof.
\end{proof}

\subsection{Proof of Corollary \texorpdfstring{\ref{cor:Bias.IndepOfN}}{2}}\label{subsubsec:Prf_cor:Bias.IndepOfN}

By applying Assumption~\ref{asm:Limits2} to Lemma~\ref{lem:BayesEst.ML.Diff}, we can straightforwardly evaluate the difference between the Bayes estimator and the maximum likelihood estimator as
\[
  \hat{\theta}^{B}-\hat{\theta}^{ML}
  =n^{-1}H^{-1}(\theta)\left\{\frac{\partial\log\pi(\theta)}{\partial\theta}+\frac{1}{2}\sum_{r=1}^{p}\sum_{s=1}^{p}H^{rs}(\theta)K_{\cdot rs}(\theta)\right\}+o_{p}(n^{-1})
  .
\]
It remains to evaluate the difference between the maximum likelihood estimator and the true parameter. First, we replace \eqref{eq:ML.True.Diff.Int1} with a slightly different expression, which is
\begin{align*}
  &H(\theta)\left\{H_{n}(\theta)-\frac{1}{2}\sum_{r=1}^{p}(\hat{\theta}_{r}^{ML}-\theta_{r})K_{n,r\cdot\cdot}(\theta)\right\}^{-1}\\
  &=\left[I_{p}-\{H(\theta)-H_{n}(\theta)\}H^{-1}(\theta)-\frac{1}{2}\sum_{r=1}^{p}(\hat{\theta}_{r}^{ML}-\theta_{r})K_{n,r\cdot\cdot}(\theta)H^{-1}(\theta)\right]^{-1}\\
  &=I_{p}+\{H(\theta)-H_{n}(\theta)\}H^{-1}(\theta)+\frac{1}{2}\sum_{r=1}^{p}(\hat{\theta}_{r}^{ML}-\theta_{r})K_{n,r\cdot\cdot}(\theta)H^{-1}(\theta)+O_{p}(n^{-1})\\
  &=2I_{p}-H_{n}(\theta)H^{-1}(\theta)+\frac{1}{2}\sum_{r=1}^{p}(\hat{\theta}_{r}^{ML}-\theta_{r})K_{n,r\cdot\cdot}(\theta)H^{-1}(\theta)+O_{p}(n^{-1})
  .
\end{align*}
This equation, combined with \eqref{eq:ML.True.Diff.Form1} and \eqref{eq:ML.True.Diff.Int2}, gives us
\begin{align*}
  &nH_{n}(\theta)(\hat{\theta}^{ML}-\theta)\\
  &=2\frac{\partial\ell_{n}(\theta)}{\partial\theta}+\sum_{r=1}^{p}\sum_{s=1}^{p}H^{rs}(\theta)
  \begin{bmatrix}
    J_{n,1r,s}(\theta)\\
    \vdots\\
    J_{n,pr,s}(\theta)
  \end{bmatrix}\\
  &\quad+\frac{1}{2}\sum_{r=1}^{p}\sum_{s=1}^{p}H_{n}^{rs}(\theta)\sum_{t=1}^{p}\sum_{u=1}^{p}H^{tu}K_{n,r\cdot t}(\theta)I_{n,su}(\theta)+o_{p}(1)\\
  &=2\frac{\partial\ell_{n}(\theta)}{\partial\theta}+\sum_{r=1}^{p}\sum_{s=1}^{p}H^{rs}(\theta)\left\{
  \begin{bmatrix}
    J_{n,1r,s}(\theta)\\
    \vdots\\
    J_{n,pr,s}(\theta)
  \end{bmatrix}
  +\frac{1}{2}\sum_{t=1}^{p}\sum_{u=1}^{p}H^{tu}(\theta)K_{r\cdot t}(\theta)I_{n,su}(\theta)
  \right\}+o_{p}(1)
  .
\end{align*}
Thus, we have
\[
  \hat{\theta}^{B}-\theta
  =n^{-1}H^{-1}(\theta)\left[\frac{\partial}{\partial\theta}\{\log\pi(\theta)+2\ell_{n}(\theta)\}+\frac{1}{2}\sum_{r=1}^{p}\sum_{s=1}^{p}H^{rs}(\theta)\bar{A}_{n,rs}(\theta)\right]+o_{p}(n^{-1})
  ,
\]
where
\[
  \bar{A}_{n,rs}(\theta)
  =
  \begin{bmatrix}
    K_{1rs}(\theta)+2J_{n,1r,s}(\theta)\\
    \vdots\\
    K_{prs}(\theta)+2J_{n,pr,s}(\theta)
  \end{bmatrix}
  +\frac{1}{2}\sum_{t=1}^{p}\sum_{u=1}^{p}H^{tu}(\theta)K_{r\cdot t}(\theta)I_{n,su}(\theta)
  .
\]
Taking the expectation of the above equation, we obtain the desired result.

\subsection{Proof of Theorem~\texorpdfstring{\ref{thm:ExistenceOfPrior}}{2}}\label{subsec:Proof.thm:ExistenceOfPrior}

We prove the following general result.

\begin{lemma}\label{lem:LinerPDE}
  Suppose $\Theta$ is a rectangular subspace of $\mathbb{R}^{p}$. Let $\phi:\Theta\to\mathbb{R}^{p}$ be a vector-valued function that is first-order differentiable. We refer to the $r$th component of $\phi(\theta)$ as $\phi_{r}(\theta)$. Suppose $\xi:\Theta\to\mathbb{R}$ is a real-valued function, and for all inner points $\theta\in\Theta$, consider a system of partial differential equations
  \begin{equation}\label{eq:LinearPDE}
    \frac{\partial\xi(\theta)}{\partial\theta}
    =\phi(\theta)
    .
  \end{equation}
  Suppose we can interchange the order of integration and differentiation of $\phi$, i.e.,
  \[
    \int\left(\frac{\partial\phi(\theta)}{\partial\theta}\right)d\theta
    =\frac{\partial}{\partial\theta}\left(\int\phi(\theta)d\theta\right)
    .
  \]
  Then, \eqref{eq:LinearPDE} has a twice continuously differentiable solution $\xi:\mathbb{\theta}\to\mathbb{R}$ if and only if
  \begin{equation}\label{eq:IntegrabilityCondition}
    \frac{\partial\phi_{r}(\theta)}{\partial\theta_{s}}
    =\frac{\partial\phi_{s}(\theta)}{\partial\theta_{r}}
    \quad(r,s=1,\ldots,p)
  \end{equation}
  holds.
\end{lemma}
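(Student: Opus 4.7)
The plan is to prove the classical Poincaré-type statement that on a rectangular (hence convex) domain, a vector field is a gradient if and only if its Jacobian is symmetric. The two implications separate cleanly: necessity is immediate from Schwarz's theorem, while sufficiency requires an explicit construction of $\xi$ together with a telescoping argument driven by the integrability condition.

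For necessity, suppose $\xi$ is a twice continuously differentiable solution to \eqref{eq:LinearPDE}, so $\phi_r(\theta) = \partial \xi(\theta)/\partial \theta_r$ for each $r$. Differentiating with respect to $\theta_s$ gives $\partial \phi_r/\partial \theta_s = \partial^2 \xi/(\partial \theta_s \partial \theta_r)$, and since $\xi$ is $C^2$, Schwarz's theorem on equality of mixed partials yields $\partial^2 \xi/(\partial \theta_s \partial \theta_r) = \partial^2 \xi/(\partial \theta_r \partial \theta_s) = \partial \phi_s/\partial \theta_r$. This is exactly \eqref{eq:IntegrabilityCondition}.

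For sufficiency, I would construct $\xi$ along the axis-aligned broken path from a fixed reference point $c = (c_1, \ldots, c_p) \in \Theta$ to $\theta$, using the same formula that underlies Corollary~\ref{cor:ConstructionOfPrior}:
\[
\xi(\theta) = \sum_{t=1}^{p} \int_{c_t}^{\theta_t} \phi_t(c_1, \ldots, c_{t-1}, z, \theta_{t+1}, \ldots, \theta_p)\, dz.
\]
The rectangular structure of $\Theta$ ensures that this piecewise path stays inside $\Theta$, so each integral is well defined. The task reduces to verifying $\partial \xi/\partial \theta_r = \phi_r(\theta)$ for each $r$. Fixing $r$: the terms with $t > r$ do not involve $\theta_r$ and contribute nothing; the term with $t = r$ contributes $\phi_r(c_1, \ldots, c_{r-1}, \theta_r, \theta_{r+1}, \ldots, \theta_p)$ by the fundamental theorem of calculus; and the terms with $t < r$ depend on $\theta_r$ only through the integrand, so the interchange-of-derivatives hypothesis allows differentiation under the integral sign. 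Applying the integrability condition \eqref{eq:IntegrabilityCondition} converts each integrand $\partial \phi_t/\partial \theta_r$ into $\partial \phi_r/\partial \theta_t$, and a second application of the fundamental theorem of calculus along the $t$-th axis gives $\phi_r(c_1, \ldots, c_{t-1}, \theta_t, \theta_{t+1}, \ldots, \theta_p) - \phi_r(c_1, \ldots, c_{t-1}, c_t, \theta_{t+1}, \ldots, \theta_p)$. Summing over $t = 1, \ldots, r-1$ telescopes to $\phi_r(\theta) - \phi_r(c_1, \ldots, c_{r-1}, \theta_r, \ldots, \theta_p)$, which combines with the $t = r$ contribution to yield $\phi_r(\theta)$ exactly.

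The main obstacle is purely bookkeeping: carefully tracking which arguments of each $\phi_t$ still carry $\theta_r$ after the substitutions $\theta_s \mapsto c_s$ for $s < t$, so that differentiation under the integral sign is applied only where valid and the telescoping structure emerges on the nose. The integrability condition enters in exactly one place — swapping $\partial \phi_t/\partial \theta_r$ for $\partial \phi_r/\partial \theta_t$ to make the integrand an exact derivative along the $t$-th coordinate — which is why it is both necessary and sufficient. The rectangular shape of $\Theta$ is essential, since on a non-simply-connected domain no such axis-aligned global antiderivative need exist.
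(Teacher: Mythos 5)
Your proposal is correct and follows essentially the same route as the paper: necessity via symmetry of the Hessian (Schwarz), and sufficiency via the same axis-aligned broken-path construction, differentiation under the integral justified by the interchange hypothesis, one application of the integrability condition inside each integral, and the identical telescoping argument. No gaps.
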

\begin{proof}
  ($\Longrightarrow$): Differentiating both sides of \eqref{eq:LinearPDE} with respect to $\theta$, we have
  \[
    \frac{\partial\xi(\theta)}{\partial\theta\partial\theta^{\mathrm{\scriptscriptstyle{T}}}}
    =\frac{\partial\phi(\theta)}{\partial\theta}
    .
  \]
  Observe that the left-hand side of the above equation is a symmetric matrix since $\xi$ is twice continuously differentiable. Therefore, the condition \eqref{eq:IntegrabilityCondition} holds.

  ($\Longleftarrow$): Fix an arbitrary constant vector $(c_{1},\ldots,c_{p})\in\Theta$. For each $r=1,\ldots,p$, define a function $\psi_{r}:\mathbb{R}^{p-r+1}\to\mathbb{R}$ by $\psi_{r}(\theta_{r},\ldots,\theta_{p})=\phi_{r}(c_{1},\ldots,c_{r-1},\theta_{r},\ldots,\theta_{p})$. Then, for an arbitrary constant $C$, the function
  \begin{equation}\label{eq:LinearPDESolution}
    \xi(\theta)
    =\sum_{r=1}^{p}\int_{c_{r}}^{\theta_{r}}\psi_{r}(z,\theta_{r+1},\ldots,\theta_{p})dz+C
  \end{equation}
  is a solution to \eqref{eq:LinearPDE}. Indeed, by \eqref{eq:IntegrabilityCondition}, for $s>r$, we have
  \begin{equation}\label{eq:DerivativeOfPsi}
    \frac{\partial}{\partial \theta_{s}}\psi_{r}(\theta_{r},\ldots,\theta_{p})
    =\frac{\partial\phi_{r}(\theta)}{\partial \theta_{s}}\bigg|_{(\theta_{1},\ldots,\theta_{r-1})=(c_{1},\ldots,c_{r-1})}
    =\frac{\partial\phi_{s}(\theta)}{\partial \theta_{r}}\bigg|_{(\theta_{1},\ldots,\theta_{r-1})=(c_{1},\ldots,c_{r-1})}
    .
  \end{equation}
  Thus, for $s=1,\ldots,p$, we obtain
  \begin{align*}
    &\frac{\partial}{\partial\theta_{s}}\left\{\sum_{r=1}^{p}\int_{c_{r}}^{\theta_{r}}\psi_{r}(z,\theta_{r+1},\ldots,\theta_{p})dz+C\right\}\\
    &=\sum_{r=1}^{s-1}\int_{c_{r}}^{\theta_{r}}\left\{\frac{\partial}{\partial\theta_{s}}\psi_{r}(z,\theta_{r+1},\ldots,\theta_{p})\right\}dz+\psi_{s}(\theta_{s},\ldots,\theta_{p})\\
    &=\sum_{r=1}^{s-1}\int_{c_{r}}^{\theta_{r}}\left\{\frac{\partial}{\partial z}\phi_{s}(\theta_{1},\ldots,\theta_{r-1},z,\theta_{r+1},\ldots,\theta_{p})\bigg|_{(\theta_{1},\ldots,\theta_{r-1})=(c_{1},\ldots,c_{r-1})}\right\}dz+\psi_{s}(\theta_{s},\ldots,\theta_{p})\\
    &=\sum_{r=1}^{s-1}\{\phi_{s}(c_{1},\ldots,c_{r-1},\theta_{r},\ldots,\theta_{p})-\phi_{s}(c_{1},\ldots,c_{r},\theta_{r+1},\ldots,\theta_{p})\}
    +\phi_{s}(c_{1},\ldots,c_{s-1},\theta_{s},\ldots,\theta_{p})\\
    &=\phi_{s}(\theta_{1},\ldots,\theta_{p})
    ,
  \end{align*}
  where the second equality follows from \eqref{eq:DerivativeOfPsi} and the third equality follows from the fundamental theorem of calculus. Hence, \eqref{eq:LinearPDESolution} is a solution to \eqref{eq:LinearPDE}.
\end{proof}

\section{Further discussion on the main theoretical results}

In this section, we provide further discussion on the main theoretical results, as well as results concerning asymptotically unbiased priors for i.i.d. models. We begin by providing the following observation.

\begin{remark}\label{rem:H=I}
  It can be seen that when $H(\theta)=I(\theta)$ holds, we have $A_{rst}(\theta)=2\{K_{trs}(\theta)+J_{tr,s}(\theta)\}$. Hence, the definition of $\phi(\theta)$ in \eqref{eq:PriorCond.IndepOfN} simplifies to
  \begin{equation}\label{eq:phi_t.H=I}
    \phi_{t}(\theta)
    =-\sum_{r=1}^{p}\sum_{s=1}^{p}H^{rs}(\theta)\{K_{trs}(\theta)+J_{tr,s}(\theta)\}
    \quad(t=1,\ldots,p)
    .
  \end{equation}
\end{remark}

We move on to considering i.i.d. models. Suppose $X_{1},\ldots,X_{n}$ is a sequence of i.i.d. random variables with density function $f_{n}(x_{1},\ldots,x_{n}\mid\theta)=\prod_{i=1}^{n}f(x_{i}\mid\theta)$. Under some regularity conditions, it is well known that $\sqrt{n}(\hat{\theta}^{ML}-\theta)$ converges in distribution to $N(0,I^{-1}(\theta))$, where $I(\theta)$ is the Fisher information matrix defined by
\begin{equation}\label{eq:FisherInfo.iid}
  I(\theta)
  =E\left[\left\{\frac{\partial\log f(X\mid\theta)}{\partial\theta}\right\}\left\{\frac{\partial\log f(X\mid\theta)}{\partial\theta}\right\}^{\mathrm{\scriptscriptstyle{T}}}\right]
  =-E\left\{\frac{\partial^{2}\log f(X\mid\theta)}{\partial\theta\partial\theta^{\mathrm{\scriptscriptstyle{T}}}}\right\}
  .
\end{equation}
This means condition (i) of Assumption~\ref{asm:Limits1} is satisfied. We compute the limits defined in Assumption~\ref{asm:Limits2} in this situation. Since $X_{1},\ldots,X_{n}$ are i.i.d., applying the central limit theorem, we conclude that $\sqrt{n}\{H_{n}(\theta)-I(\theta)\}$ converges in distribution to a zero-mean normal distribution with some covariance function. Thus, condition (i) of Assumption~\ref{asm:Limits2} is satisfied with $H(\theta)=I(\theta)$. Applying the law of large numbers, we can show that condition (ii) of Assumption~\ref{asm:Limits2} is also satisfied with
\[
  K_{rst}(\theta)
  =E\left\{\frac{\partial^{3}\log f(X\mid\theta)}{\partial\theta_{r}\partial\theta_{s}\partial\theta_{t}}\right\}
  .
\]
Conditions (iii) and (iv) of Assumption~\ref{asm:Limits2} are straightforward; we can take $I(\theta)$ in condition (iii) as the Fisher information matrix in \eqref{eq:FisherInfo.iid}, and $J_{rs,t}(\theta)$ in condition (iv) as
\[
  J_{rs,t}(\theta)
  =E\left\{\frac{\partial^{2}\log f(X\mid\theta)}{\partial\theta_{r}\partial\theta_{s}}\frac{\partial\log f(X\mid\theta)}{\partial\theta_{t}}\right\}
  .
\]
Since $H(\theta)$ is identical to the Fisher information matrix $I(\theta)$, by Remark~\ref{rem:H=I}, $\phi(\theta)$ in \eqref{eq:PriorCond.IndepOfN} is simplified to \eqref{eq:phi_t.H=I}. Noting that the relation
\begin{equation}\label{eq:Relation.IJK}
  \frac{\partial I_{rs}(\theta)}{\partial\theta_{t}}+J_{rs,t}(\theta)+K_{rst}(\theta)=0
  \quad (r,s,t=1,\ldots,p)
\end{equation}
holds in general, we conclude the following:

\begin{corollary}\label{cor:Bias.iid}
  Given the assumptions previously discussed, the bias of the Bayes estimator is given by
  \[
    E(\hat{\theta}^{B})-\theta
    =n^{-1}I^{-1}(\theta)\left\{\frac{\partial\log\pi(\theta)}{\partial\theta}-\sum_{r=1}^{p}\sum_{s=1}^{p}I^{rs}(\theta)\frac{\partial}{\partial\theta_{s}}
    \begin{bmatrix}
      I_{1r}(\theta)\\
      \vdots\\
      I_{pr}(\theta)
    \end{bmatrix}
    \right\}+o(n^{-1})
    ,
  \]
  where $I(\theta)$ is the Fisher information matrix defined in \eqref{eq:FisherInfo.iid}.
  Consequently, the Bayes estimator is asymptotically unbiased if the prior $\pi(\theta)$ satisfies
  \begin{equation}\label{eq:PriorCond.iid}
    \frac{\partial\log\pi(\theta)}{\partial\theta_{t}}
    =\sum_{r=1}^{p}\sum_{s=1}^{p}I^{rs}(\theta)\frac{\partial I_{tr}(\theta)}{\partial\theta_{s}}
    \equiv\phi_{t}(\theta)
    \quad(t=1,\ldots,p)
    .
  \end{equation}
\end{corollary}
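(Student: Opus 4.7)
The plan is to specialize Corollary~\ref{cor:Bias.IndepOfN} to the i.i.d. setting and then simplify the resulting vector $\phi$ using the structure of the Fisher information. First, I would invoke the discussion preceding the corollary, which verifies Assumptions~\ref{asm:Limits1} and~\ref{asm:Limits2} in this regime with $H(\theta)=I(\theta)$ the Fisher information matrix and with $K_{rst}$, $I_{rs}$, $J_{rs,t}$ given by the per-observation expectations of the corresponding log-density functionals. The law of large numbers applied componentwise to the sample averages $H_{n}$ and $K_{n,rst}$, together with the standard interchange of expectation and differentiation, supplies conditions (i)--(iv) of Assumption~\ref{asm:Limits2}.

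Second, because $H(\theta)=I(\theta)$, Remark~\ref{rem:H=I} collapses $A_{rsv}(\theta)$ to $2\{K_{vrs}(\theta)+J_{vr,s}(\theta)\}$, so substituting into \eqref{eq:PriorCond.IndepOfN} yields the preliminary expression
\[
\phi_{t}(\theta)=-\sum_{r=1}^{p}\sum_{s=1}^{p}I^{rs}(\theta)\{K_{trs}(\theta)+J_{tr,s}(\theta)\}.
\]
Plugging this back into the bias formula of Corollary~\ref{cor:Bias.IndepOfN} already produces a version of the stated bias, except that the summed term involves $K+J$ rather than $\partial I/\partial\theta$.

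Third, I would convert the $K+J$ combination into the required derivative of the Fisher information via the identity \eqref{eq:Relation.IJK}, $\partial I_{rs}/\partial\theta_{t}+J_{rs,t}+K_{rst}=0$. This identity follows from differentiating $I_{rs}(\theta)=-E\{\partial^{2}\log f(X\mid\theta)/\partial\theta_{r}\partial\theta_{s}\}$ with respect to $\theta_{t}$ under the standard regularity allowing exchange of $\partial/\partial\theta_{t}$ with $E$. Applied with the relabeling $(r,s,t)\mapsto(t,r,s)$, it gives $K_{trs}+J_{tr,s}=-\partial I_{tr}/\partial\theta_{s}$, whence
\[
\phi_{t}(\theta)=\sum_{r=1}^{p}\sum_{s=1}^{p}I^{rs}(\theta)\frac{\partial I_{tr}(\theta)}{\partial\theta_{s}}.
\]
Inserting this back into the bias formula recovers the displayed expression, and the condition \eqref{eq:PriorCond.iid} for asymptotic unbiasedness then follows by setting $\partial\log\pi(\theta)/\partial\theta_{t}=\phi_{t}(\theta)$ for each $t$.

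The only genuinely delicate point is the index bookkeeping in the third step: $K_{rst}$ is fully symmetric in $(r,s,t)$ while $J_{rs,t}$ is symmetric only in the paired indices $(r,s)$ and treats $t$ asymmetrically, so the relabeling applied to \eqref{eq:Relation.IJK} must be matched to the exact form $J_{tr,s}$ that appears in $\phi_{t}$. Beyond that, the argument is a direct chaining of Corollary~\ref{cor:Bias.IndepOfN}, Remark~\ref{rem:H=I}, and \eqref{eq:Relation.IJK}, with no further analytic content.
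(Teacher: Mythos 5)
Your proposal is correct and follows essentially the same route as the paper: verify Assumption~\ref{asm:Limits2} in the i.i.d.\ case with $H(\theta)=I(\theta)$, apply Corollary~\ref{cor:Bias.IndepOfN} with the simplification of Remark~\ref{rem:H=I}, and then use the identity \eqref{eq:Relation.IJK} (relabeled as $K_{trs}+J_{tr,s}=-\partial I_{tr}/\partial\theta_{s}$) to obtain $\phi_{t}(\theta)=\sum_{r,s}I^{rs}(\theta)\,\partial I_{tr}(\theta)/\partial\theta_{s}$. Your index bookkeeping in the relabeling step matches the paper's, so no gap remains.
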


\begin{remark}
  The above result for i.i.d. models is consistent with the existing result; see Section 7 of \cite{hartigan1965asymptotically}.
\end{remark}

\begin{remark}\label{rem:Firth}
  There is an interesting connection between the asymptotically unbiased prior for i.i.d. models, Firth's method \citep{firth1993bias}, and the moment matching prior \citep{ghosh2011moment}. Firth's method reduces the bias of the maximum likelihood estimator to $o(n^{-1})$ by modifying the score equation $\partial\ell_{n}(\theta)/\partial\theta\mid_{\theta=\hat{\theta}^{ML}}=0$ as
  \begin{equation}\label{eq:FirthMethod}
    \frac{\partial\ell_{n}(\theta)}{\partial\theta_{t}}
    =-\sum_{r=1}^{p}\sum_{s=1}^{p}I^{rs}(\theta)\left\{J_{tr,s}(\theta)+\frac{1}{2}K_{trs}(\theta)\right\}
    \quad(t=1,\ldots,p)
    .
  \end{equation}
  In contrast, the moment matching prior $\pi^{M}(\theta)$ is defined such that the posterior mean matches the maximum likelihood estimator up to $o_{p}(n^{-1})$. This prior satisfies the equation
  \begin{equation}\label{eq:MomentMatchingPrior}
    \frac{\partial\log\pi^{M}(\theta)}{\partial\theta_{t}}
    =-\frac{1}{2}\sum_{r=1}^{p}\sum_{s=1}^{p}I^{rs}(\theta)K_{trs}(\theta)
    \quad(t=1,\ldots,p)
    .
  \end{equation}
  Interestingly, by considering the general relation \eqref{eq:Relation.IJK}, it can be found that the sum of the right-hand sides of \eqref{eq:FirthMethod} and \eqref{eq:MomentMatchingPrior} corresponds to the right-hand side of \eqref{eq:PriorCond.iid}. This observation suggests that the asymptotically unbiased prior can be interpreted as a bias-reduced version of the moment matching prior, where the bias reduction is achieved using Firth's method.
\end{remark}

\begin{remark}
  \cite{meng2002single} proposed the single observation unbiased prior, which is a prior that ensures unbiasedness of the Bayes estimator for a single observation. Our result is closely related to the concept of single observation unbiased prior in that the  single observation unbiased prior is always second-order unbiased when considering repeated sampling from the same distribution. Consequently, the necessary and sufficient condition for the existence of an asymptotically unbiased prior also serves as a necessary condition for the existence of a single observation unbiased prior. In fact, the asymptotically unbiased priors we derived often result in Bayes estimators that are exactly unbiased (see Section~\ref{sec:Examlpes.Appendix}).
\end{remark}

If the model is i.i.d. and the Fisher information matrix $I(\theta)$ is diagonal, a simpler sufficient condition for the integrability condition of Theorem~\ref{thm:ExistenceOfPrior} can be considered.

\begin{proposition}\label{prop:DiagonalFisherInfo}
  Suppose the model satisfies the assumptions of Theorem~\ref{thm:ExistenceOfPrior}. Suppose further that the model is i.i.d. and has a diagonal Fisher information matrix $I(\theta)$ that is twice continuously differentiable. Then, there exists an asymptotically unbiased prior if for any $t,u=1,\ldots,p$, the ratio $I_{tt}(\theta)/I_{uu}(\theta)$ can be expressed as the product of  the following two functions: (a) a twice continuously differentiable function that does not depend on $\theta_{t}$ (but may depend on $\{\theta_{r}:r\neq t\}$); (b) a twice continuously differentiable function that does not depend on $\theta_{u}$ (but may depend on $\{\theta_{r}:r\neq u\}$).
\end{proposition}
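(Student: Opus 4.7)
My plan is to verify the integrability condition \eqref{eq:ExistenceOfPrior} of Theorem~\ref{thm:ExistenceOfPrior}, applied to the function $\phi_t(\theta)$ defined in \eqref{eq:PriorCond.iid}, under the stated factorization hypothesis on the ratio $I_{tt}(\theta)/I_{uu}(\theta)$. Once this is established, existence of the asymptotically unbiased prior follows immediately from Theorem~\ref{thm:ExistenceOfPrior} (and the prior can in fact be written down via Corollary~\ref{cor:ConstructionOfPrior}).

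First I would specialise $\phi_t(\theta)$ to the diagonal Fisher information setting. Using $I^{rs}(\theta)=\delta_{rs}/I_{rr}(\theta)$ and $I_{tr}(\theta)=\delta_{tr}I_{tt}(\theta)$, the double sum in \eqref{eq:PriorCond.iid} collapses to a single term, yielding
\[
  \phi_t(\theta)
  =\frac{1}{I_{tt}(\theta)}\frac{\partial I_{tt}(\theta)}{\partial\theta_t}
  =\frac{\partial\log I_{tt}(\theta)}{\partial\theta_t}
  \quad(t=1,\ldots,p).
\]
Substituting this into the integrability condition and using Clairaut's theorem (justified by the assumed twice continuous differentiability of $I(\theta)$) to interchange the order of mixed partials, the condition $\partial\phi_t/\partial\theta_u=\partial\phi_u/\partial\theta_t$ reduces to
\[
  \frac{\partial^{2}}{\partial\theta_t\,\partial\theta_u}\log\frac{I_{tt}(\theta)}{I_{uu}(\theta)}=0
  \quad(t\neq u),
\]
with the case $t=u$ being automatic.

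Finally I would invoke the assumed factorization $I_{tt}(\theta)/I_{uu}(\theta)=f(\{\theta_r:r\neq t\})\,g(\{\theta_r:r\neq u\})$. Taking logarithms decomposes the left-hand side into a summand annihilated by $\partial/\partial\theta_t$ plus one annihilated by $\partial/\partial\theta_u$, so the required mixed partial vanishes identically. Hence the integrability condition holds for every pair $(t,u)$, and Theorem~\ref{thm:ExistenceOfPrior} delivers an asymptotically unbiased prior.

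I do not expect a substantive obstacle: the diagonal structure of $I(\theta)$ forces $\phi_t$ to depend on a single diagonal entry $I_{tt}$, after which the argument is purely formal calculus. The only point requiring care is to confirm that the twice-differentiability assumption on $I(\theta)$ is exactly what is needed both to apply Clairaut's theorem and to meet the smoothness requirements of Theorem~\ref{thm:ExistenceOfPrior}.
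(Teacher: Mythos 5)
Your proposal is correct and follows essentially the same route as the paper: specialize $\phi_t$ to $\partial\log I_{tt}(\theta)/\partial\theta_t$ using diagonality, reduce the integrability condition to the vanishing of the mixed partial of $\log\{I_{tt}(\theta)/I_{uu}(\theta)\}$, and conclude from the assumed factorization (the paper merely presents the same computation in the reverse order, starting from the factorization and differentiating). No gaps.
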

\begin{proof}
  Fix $t,u\in\{1,\ldots,p\}$. By assumption, the ratio $I_{tt}(\theta)/I_{uu}(\theta)$ can be expressed as
  \[
    I_{tt}(\theta)/I_{uu}(\theta)
    =k_{t}(\theta_{-t})k_{u}(\theta_{-u})
    ,
  \]
  where $k_{t}(\theta_{-t})$ and $k_{u}(\theta_{-u})$ satisfy conditions (a) and (b) of the statement of the proposition, respectively. Taking the logarithm of both sides, we have
  \[
    \log I_{tt}(\theta)-\log k_{t}(\theta_{-t})
    =\log I_{uu}(\theta)+\log k_{u}(\theta_{-u})
    .
  \]
  Furthermore, differentiating both sides with respect to $\theta_{t}$ and $\theta_{u}$, we obtain
  \[
    \frac{\partial^{2}}{\partial\theta_{u}\partial\theta_{t}}\{\log I_{tt}(\theta)-\log k_{t}(\theta_{-t})\}
    =\frac{\partial^{2}}{\partial\theta_{t}\partial\theta_{u}}\{\log I_{uu}(\theta)+\log k_{u}(\theta_{-u})\}
    ,
  \]
  that is,
  \[
    \frac{\partial}{\partial\theta_{u}}\left\{\frac{1}{I_{tt}(\theta)}\frac{\partial I_{tt}(\theta)}{\partial\theta_{t}}\right\}
    =\frac{\partial}{\partial\theta_{t}}\left\{\frac{1}{I_{uu}(\theta)}\frac{\partial I_{uu}(\theta)}{\partial\theta_{u}}\right\}
    .
  \]
  Since the Fisher information is diagonal, we have $I^{rr}(\theta)=1/I_{rr}(\theta)$ for each $r$, and $I_{rs}(\theta)=I^{rs}(\theta)=0$ for $r\neq s$. Thus, the integrability condition of Theorem~\ref{thm:ExistenceOfPrior} holds.
\end{proof}

\section{Derivations of asymptotically unbiased priors for various models}\label{sec:Examlpes.Appendix}

\subsection{One-parameter models}\label{subsec:OneParameterModels}

Suppose the model satisfies the assumptions of Theorem~\ref{thm:ExistenceOfPrior} with $p=1$. In this one-parameter case, since $\phi(\theta)$ is a scalar, the integrability condition is automatically satisfied, ensuring the existence of an asymptotically unbiased prior. Specifically, if the model is i.i.d., $\phi(\theta)$ defined in Corollary~\ref{cor:Bias.iid} is written as $\phi(\theta)=\phi_{1}(\theta)=I^{-1}(\theta)I'(\theta)$, where $I'(\theta)$ is the derivative of the Fisher information. An asymptotically unbiased prior can be constructed using the method in Corollary~\ref{cor:ConstructionOfPrior} as follows:
\begin{enumerate}
  \item Fix a constant $c\in\Theta$ arbitrarily.
  \item Define $\psi_{1}(\theta)=\phi_{1}(\theta)=I^{-1}(\theta)I'(\theta)$ and compute $\tilde{\pi}(\theta)=\exp\{\int_{c}^{\theta}\psi_{1}(z)dz\}=I(\theta)/I(c)$.
  \item Take the prior as $\pi(\theta)\propto I(\theta)$.
\end{enumerate}
Thus, when the model is i.i.d., the asymptotically unbiased prior is proportional to the Fisher information itself.

\begin{example}[Binomial distribution]
  Suppose $X_{1},\ldots,X_{n}$ are i.i.d. random variables following a Bernoulli distribution with parameter $\theta\in(0,1)$. By the previous argument, the asymptotically unbiased prior for $\theta$ is given by
  \[
    \pi(\theta)
    \propto I(\theta)
    =\theta^{-1}(1-\theta)^{-1}
    .
  \]
  Let $\bar{X}=n^{-1}\sum_{i=1}^{n}X_{i}$ denote the sample mean of the $n$ Bernoulli random variables. The posterior density is computed as
  \[
    \pi(\theta\mid X_{1},\ldots,X_{n})
    \propto\theta^{n\bar{X}-1}(1-\theta)^{n(1-\bar{X})-1}
    ,
  \]
  which is a beta distribution with parameters $(n\bar{X},n(1-\bar{X}))$. Thus, the posterior mean of $\theta$ is given by $\hat{\theta}^{B}=\bar{X}$, which is an unbiased estimator of $\theta$.
\end{example}

\subsection{Simple multi-parameter models}

In this section, we illustrate the construction of asymptotically unbiased priors for multi-parameter models. We show that, in certain cases, the priors constructed using the method described in Corollary~\ref{cor:ConstructionOfPrior} result in Bayes estimators that are exactly unbiased. Furthermore, we provide an example where an asymptotically unbiased prior does not exist.

\begin{example}[Mean and variance parameters of normal distribution]\label{ex:NormalMeanVar}
  Suppose $X_{1},\ldots,X_{n}$ are i.i.d. random variables with density function
  \[
    f(x\mid\theta)
    =\frac{1}{\sqrt{2\pi}\sigma}\exp\left\{-\frac{(x-\mu)^{2}}{2\sigma^{2}}\right\}
    ,
  \]
  where $\theta=(\mu,\sigma^{2})$ is the parameter of interest. We construct an asymptotically unbiased prior based on the result of Corollary~\ref{cor:ConstructionOfPrior}. The Fisher information $I(\theta)$ is given by
  \[
    I(\theta)
    =
    \begin{bmatrix}
      \sigma^{-2}&0\\
      0&\sigma^{-4}/2
    \end{bmatrix}
    =
    \begin{bmatrix}
      \theta_{2}^{-1}&0\\
      0&\theta_{2}^{-2}/2
    \end{bmatrix}
    .
  \]
  Observe that the Fisher information matrix is diagonal with $I_{11}(\theta)/I_{22}(\theta)=2\theta_{2}$. Thus, by Proposition~\ref{prop:DiagonalFisherInfo} and Theorem~\ref{thm:ExistenceOfPrior}, the existence of an asymptotically unbiased prior is guaranteed. We can also compute the inverse of the Fisher information matrix and its partial derivatives as
  \[
    I^{-1}(\theta)
    =
    \begin{bmatrix}
      \sigma^{2}&0\\
      0&2\sigma^{4}
    \end{bmatrix}
    ,\quad
    \frac{\partial I(\theta)}{\partial\theta_{1}}
    =
    \begin{bmatrix}
      0&0\\
      0&0
    \end{bmatrix}
    ,\quad
    \frac{\partial I(\theta)}{\partial\theta_{2}}
    =
    \begin{bmatrix}
      -\sigma^{-4}&0\\
      0&-\sigma^{-6}
    \end{bmatrix}
    ,
  \]
  respectively. According to Corollary~\ref{cor:ConstructionOfPrior}, we compute $\phi_{1}(\theta)$ and $\phi_{2}(\theta)$ as
  \[
    \phi_{1}(\theta)
    =\sum_{r=1}^{2}\sum_{s=1}^{2}I^{rs}(\theta)\frac{\partial I_{1r}(\theta)}{\partial\theta_{s}}
    =0
    ,\quad
    \phi_{2}(\theta)
    =\sum_{r=1}^{2}\sum_{s=1}^{2}I^{rs}(\theta)\frac{\partial I_{2r}(\theta)}{\partial\theta_{s}}
    =-2\sigma^{-2}
    .
  \]
  We follow the procedure of Corollary~\ref{cor:ConstructionOfPrior} to construct an asymptotically unbiased prior.
  \begin{enumerate}
    \item Fix an arbitrary constant $c>0$.
    \item Define $\psi_{1}(\theta_{1},\theta_{2})=0$ and $\psi_{2}(\theta_{2})=\psi_{2}(\sigma^{2})=-2\sigma^{-2}$. Compute
    \[
      \tilde{\pi}(\theta)
      \equiv\exp\left\{\int_{c}^{\sigma^{2}}\psi_{2}(z)dz\right\}=c^{2}/\sigma^{4}
      .
    \]
    \item Take $\pi(\theta)\propto\sigma^{-4}$.
  \end{enumerate}
  The obtained prior is different from Jeffreys' prior since Jeffreys' prior for this model is $\pi_{J}(\theta)\propto(\det I(\theta))^{1/2}\propto\sigma^{-3}$. Next, we calculate the posterior mean $\hat{\theta}^{B}$ corresponding to the prior
  \[
    \pi(\theta)
    =\pi(\mu,\sigma^{2})
    \propto\sigma^{-4}
    .
  \]
  Define $\bar{X}=n^{-1}\sum_{i=1}^{n}X_{i}$ and $S=\{\sum_{i=1}^{n}(X_{i}-\bar{X})^{2}\}^{1/2}$. Then, the posterior density is written as
  \[
    \pi(\mu,\sigma^{2}\mid X_{1},\ldots,X_{n})
    =\sqrt{\frac{n}{2\pi}}\frac{1}{\Gamma((n+1)/2)}\left(\frac{S^{2}}{2}\right)^{\frac{n+1}{2}}
    \sigma^{-n-4}\exp\left\{-\frac{S^{2}+n(\bar{X}-\mu)^{2}}{2\sigma^{2}}\right\}
    .
  \]
  Therefore, we obtain the posterior mean $\hat{\theta}^{B}=(\hat{\mu}^{B},\hat{\sigma}^{2,B})$ as
  \begin{align*}
    &\hat{\mu}^{B}
    =\int_{0}^{\infty}\int_{-\infty}^{\infty}\mu\pi(\mu,\sigma^{2}\mid X_{1},\ldots,X_{n})d\mu d\sigma^{2}
    =\bar{X}
    ,\\
    &\hat{\sigma}^{2,B}
    =\int_{0}^{\infty}\int_{-\infty}^{\infty}\sigma^{2}\pi(\mu,\sigma^{2}\mid X_{1},\ldots,X_{n})d\mu d\sigma^{2}
    =\frac{S^{2}}{n-1}
    .
  \end{align*}
  This estimator is an exactly unbiased estimator of $\theta=(\mu,\sigma^{2})$.
\end{example}

\begin{example}[Location and scale parameters of normal distribution]\label{ex:NormalLocScale}
  We consider the same model as in Example~\ref{ex:NormalMeanVar} but we are interested in the estimation of $\theta=(\mu,\sigma)$ instead of $(\mu,\sigma^{2})$. In this case, the Fisher information $I(\theta)$ is given by
  \[
    I(\theta)
    =
    \begin{bmatrix}
      \sigma^{-2}&0\\
      0&2\sigma^{-2}
    \end{bmatrix}
    =\theta_{2}^{-2}
    \begin{bmatrix}
      1&0\\
      0&2
    \end{bmatrix}
    .
  \]
  Since the Fisher information matrix is diagonal with $I_{11}(\theta)/I_{22}(\theta)=1/2$, the existence of an asymptotically unbiased prior is guaranteed by Proposition~\ref{prop:DiagonalFisherInfo} and Theorem~\ref{thm:ExistenceOfPrior}. Following Corollary~\ref{cor:ConstructionOfPrior} as in the previous example, we obtain the asymptotically unbiased prior
  \[
    \pi(\theta)
    =\pi(\mu,\sigma)
    \propto\sigma^{-2}
    .
  \]
  The prior above corresponds to Jeffreys' prior, unlike the parametrization of Example~\ref{ex:NormalMeanVar}.
  The posterior density is given by
  \[
    \pi(\mu,\sigma\mid X_{1},\ldots,X_{n})
    =\sqrt{\frac{2n}{\pi}}\frac{1}{\Gamma(n/2)}\left(\frac{S^{2}}{2}\right)^{\frac{n}{2}}
    \sigma^{-n-2}\exp\left\{-\frac{S^{2}+n(\bar{X}-\mu)^{2}}{2\sigma^{2}}\right\}
    ,
  \]
  and the posterior mean of $\theta=(\mu,\sigma)$ is computed as
  \begin{align*}
    &\hat{\mu}^{B}
    =\int_{0}^{\infty}\int_{-\infty}^{\infty}\mu\pi(\mu,\sigma^{2}\mid X_{1},\ldots,X_{n})d\mu d\sigma^{2}
    =\bar{X}
    ,\\
    &\hat{\sigma}^{B}
    =\int_{0}^{\infty}\int_{-\infty}^{\infty}\sigma^{2}\pi(\mu,\sigma^{2}\mid X_{1},\ldots,X_{n})d\mu d\sigma^{2}
    =\frac{\Gamma((n-1)/2)}{\sqrt{2}\Gamma(n/2)}S
    .
  \end{align*}
  Again, these are the exact unbiased estimators of $\mu$ and $\sigma$.
\end{example}

\begin{example}[Location and scale parameters of gamma distribution]
  Suppose $X_{1},\ldots,X_{n}$ are i.i.d. random variables with density function
  \[
    f(x\mid\theta)
    =\frac{x^{\theta_{1}-1}}{\Gamma(\theta_{1})\theta_{2}^{\theta_{1}}}\exp\left(-\frac{x}{\theta_{2}}\right)
    ,
  \]
  where $\theta=(\theta_{1},\theta_{2})$ is the parameter of interest. We look for a prior that satisfies the system of partial differential equations in Corollary~\ref{cor:ConstructionOfPrior}. Let $\Lambda$ denote the derivative of the digamma function, that is, $\Lambda(\theta_{1})=d^{2}\log\Gamma(\theta_{1})/d\theta_{1}^{2}$. Then, we can write the Fisher information $I(\theta)$ as
  \[
    I(\theta)
    =
    \begin{bmatrix}
      \Lambda(\theta_{1}) & \theta_{2}^{-1}\\
      \theta_{2}^{-1} & \theta_{1}\theta_{2}^{-2}
    \end{bmatrix}
    .
  \]
  The inverse of the Fisher information matrix and its partial derivatives can be computed as
  \begin{align*}
    &I^{-1}(\theta)
    =\frac{1}{\theta_{1}\Lambda(\theta_{1})-1}
    \begin{bmatrix}
      \theta_{1} & -\theta_{2}\\
      -\theta_{2} & \theta_{2}^{2}\Lambda(\theta_{1})
    \end{bmatrix}
    ,\\
    &\frac{\partial I(\theta)}{\partial\theta_{1}}
    =
    \begin{bmatrix}
      \frac{d}{d\theta_{1}}\Lambda(\theta_{1}) & 0\\
      0 & \theta_{2}^{-2}
    \end{bmatrix}
    ,\quad
    \frac{\partial I(\theta)}{\partial\theta_{2}}
    =
    \begin{bmatrix}
      0 & -\theta_{2}^{-2}\\
      -\theta_{2}^{-2} & -2\theta_{1}\theta_{2}^{-3}
    \end{bmatrix}
    ,
  \end{align*}
  respectively. Therefore, according to Corollary~\ref{cor:ConstructionOfPrior}, we compute $\phi(\theta)=(\phi_{1}(\theta),\phi_{2}(\theta))$ as
  \begin{align*}
    \phi_{1}(\theta)
    &=\sum_{r=1}^{2}\sum_{s=1}^{2}I^{rs}(\theta)\frac{\partial I_{1r}(\theta)}{\partial\theta_{s}}
    =\{\theta_{1}\Lambda(\theta_{1})-1\}^{-1}\left\{\theta_{1}\frac{d\Lambda(\theta_{1})}{d\theta_{1}}-\Lambda(\theta_{1})\right\},\\
    \phi_{2}(\theta)
    &=\sum_{r=1}^{2}\sum_{s=1}^{2}I^{rs}(\theta)\frac{\partial I_{2r}(\theta)}{\partial\theta_{s}}
    =\{\theta_{1}\Lambda(\theta_{1})-1\}^{-1}\{-2\theta_{1}\theta_{2}^{-1}\Lambda(\theta_{1})\}
    .
  \end{align*}
  While the derivative of $\phi_{1}$ with respect to $\theta_{2}$ is zero, the derivative of $\phi_{2}$ with respect to $\theta_{1}$ is not zero in general. This implies that the integrability condition is not satisfied. Hence, an asymptotically unbiased prior independent of $n$ does not exist for this model, according to Theorem~\ref{thm:ExistenceOfPrior}.
\end{example}

\subsection{Regression models}\label{subsec:RegModel}

\begin{example}[Linear regression model: Coefficients and error variance]\label{ex:LinearRegErrorVar}
  Consider the model
  \[
    y\sim N(X\beta,\sigma^{2}I_{n})
    ,
  \]
  where $y=(y_{1},\ldots,y_{n})$ is an $n$-dimensional random vector of response variables, $X=[x_{1}\ \cdots\ x_{n}]^{\mathrm{\scriptscriptstyle{T}}}$ is an $n\times p$ non-random matrix of explanatory variables and $\beta\in\mathbb{R}^{p}$ is the coefficient vector. We consider the estimation problem of the parameter $\theta=(\theta_{1},\ldots,\theta_{p},\theta_{p+1})=(\beta,\sigma^{2})\in\mathbb{R}^{p}\times(0,\infty)$. Since the model is not i.i.d., we cannot apply the result of \cite{hartigan1965asymptotically} given in Corollary~\ref{cor:Bias.iid}. Hence, we apply Corollary~\ref{cor:Bias.IndepOfN} to obtain an asymptotically unbiased prior.

  The log likelihood is given by
  \[
    \ell_{n}(\theta)
    =-\frac{n}{2}\log\sigma^{2}-\frac{1}{2\sigma^{2}}(y-X\beta)^{\mathrm{\scriptscriptstyle{T}}}(y-X\beta)+(const.)
    .
  \]
  We compute $H(\theta)$ and $I(\theta)$, which are found to be identical in this case, as
  \[
    H(\theta)
    =I(\theta)
    =
    \begin{bmatrix}
      \sigma^{-2}\lim_{n\to\infty}n^{-1}\sum_{i=1}^{n}x_{i}x_{i}^{\mathrm{\scriptscriptstyle{T}}}&0\\
      0&\sigma^{-4}/2
    \end{bmatrix}
    .
  \]
  To obtain $\phi(\theta)$ in Corollary~\ref{cor:Bias.IndepOfN}, we first compute $J_{tr,s}(\theta)$. We have
  \begin{align*}
    &J_{rs,t}(\theta)=0&(r,s=1,\ldots,p;\ t=1,\ldots,p+1),\\
    &J_{r(p+1),t}(\theta)=-\sigma^{-4}\lim_{n\to\infty}n^{-1}\sum_{i=1}^{n}x_{ir}x_{it}&(r,t=1,\ldots,p),\\
    &J_{r(p+1),(p+1)}(\theta)=0&(r=1,\ldots,p),\\
    &J_{(p+1)(p+1),t}(\theta)=3\sigma^{-4}\lim_{n\to\infty}n^{-1}\sum_{i=1}^{n}x_{it}&(t=1,\ldots,p),\\
    &J_{(p+1)(p+1),(p+1)}(\theta)=-\sigma^{-6}
    .\\
  \end{align*}
  Next, we compute $K_{rst}(\theta)$. We get
  \begin{align*}
    &K_{rst}(\theta)=0&(r,s,t=1,\ldots,p),\\
    &K_{rs(p+1)}(\theta)=\sigma^{-4}\lim_{n\to\infty}n^{-1}\sum_{i=1}^{n}x_{ir}x_{is}&(r,s=1,\ldots,p),\\
    &K_{r(p+1)(p+1)}(\theta)=0&(r=1,\ldots,p),\\
    &K_{(p+1)(p+1)(p+1)}(\theta)=2\sigma^{-6}
    .
  \end{align*}
  By substituting these values into the definition of $\phi_{t}$ given in Corollary~\ref{cor:Bias.IndepOfN}, we obtain
  \[
    \phi(\theta)
    =(0,\ldots,0,-2\sigma^{-2})
    .
  \]
  It is easy to verify that $\phi_{t}(\theta)$ is zero for all $t=1,\ldots,p$, and hence, the integrability condition is satisfied. Thus, we can construct an asymptotically unbiased prior using the construction method in Corollary~\ref{cor:ConstructionOfPrior} as
  \[
    \pi(\theta)
    =\pi(\beta,\sigma^{2})
    \propto\sigma^{-4}
    .
  \]
  The corresponding posterior distribution can be described as
  \[
    \beta\mid\sigma^{2},y
    \sim N\left(\hat{\beta}^{OLS},\sigma^{2}(X^{\mathrm{\scriptscriptstyle{T}}}X)^{-1}\right)
    ,\quad
    \sigma^{2}\mid y\sim IG\left(\frac{n-p}{2}+1,\frac{1}{2}y^{\mathrm{\scriptscriptstyle{T}}}(y-X\hat{\beta}^{OLS})\right)
    ,
  \]
  where $\hat{\beta}^{OLS}=(X^{\mathrm{\scriptscriptstyle{T}}}X)^{-1}X^{\mathrm{\scriptscriptstyle{T}}}y$ is the ordinary least squares estimator and $IG(a,b)$ is an inverse gamma distribution with shape parameter $a$ and scale parameter $b$. For $n>p$, the posterior mean under this prior is $(\hat{\beta}^{B},\hat{\sigma}^{2,B})=(\hat{\beta}^{OLS},y^{\mathrm{\scriptscriptstyle{T}}}(y-X\hat{\beta}^{OLS})/(n-p))$, which is an exactly unbiased estimator of $(\beta,\sigma^{2})$.
  The corresponding posterior distribution can be described as
  \[
    \beta\mid\sigma^{2},y
    \sim N\left(\hat{\beta}^{OLS},\sigma^{2}(X^{\mathrm{\scriptscriptstyle{T}}}X)^{-1}\right)
    ,\quad
    \sigma^{2}\mid y\sim IG\left(\frac{n-p}{2}+1,\frac{1}{2}y^{\mathrm{\scriptscriptstyle{T}}}(y-X\hat{\beta}^{OLS})\right)
    ,
  \]
  where $\hat{\beta}^{OLS}=(X^{\mathrm{\scriptscriptstyle{T}}}X)^{-1}X^{\mathrm{\scriptscriptstyle{T}}}y$ is the ordinary least squares estimator and $IG(a,b)$ is an inverse gamma distribution with shape parameter $a$ and scale parameter $b$. For $n>p$, the posterior mean under this prior is
  \[
    (\hat{\beta}^{B},\hat{\sigma}^{2,B})
    =\left(\hat{\beta}^{OLS},\frac{y^{\mathrm{\scriptscriptstyle{T}}}(y-X\hat{\beta}^{OLS})}{n-p}\right)
    ,
  \]
  which is an exactly unbiased estimator of $(\beta,\sigma^{2})$.
  The posterior mean is given by
  \[
    \hat{\theta}=(\hat{\beta}^{B},\hat{\sigma}^{B})=\left(\hat{\beta}^{OLS},\frac{\Gamma((n-p)/2)}{\sqrt{2}\Gamma((n-p+1)/2)}\{y^{\mathrm{\scriptscriptstyle{T}}}(y-X\hat{\beta}^{OLS})\}^{1/2}\right)
    ,
  \]
  which is an unbiased estimator of $(\beta,\sigma)$.
\end{example}

\begin{example}[Linear regression model: Coefficients and error standard deviation]
  Consider the same model as in Example~\ref{ex:LinearRegErrorVar}, but we are interested in the estimation of $\theta=(\beta,\sigma)\in\mathbb{R}^{p}\times(0,\infty)$ instead of $(\beta,\sigma^{2})$. Again, we apply Corollary~\ref{cor:Bias.IndepOfN} to obtain an asymptotically unbiased prior. By a similar computation as in Example~\ref{ex:LinearRegErrorVar}, we compute
  \[
    \phi(\theta)=(0,\ldots,0,-2\sigma^{-1})
    .
  \]
  Obviously, the integrability condition is satisfied in this case. We can construct an asymptotically unbiased prior using the construction method in Corollary~\ref{cor:ConstructionOfPrior} as
  \[
    \pi(\theta)
    =\pi(\beta,\sigma)
    \propto\sigma^{-2}
    .
  \]
  The corresponding posterior distribution can be described as
  \begin{align*}
    \beta\mid\sigma,y
    &\sim N\left(\hat{\beta}^{OLS},\sigma^{2}(X^{\mathrm{\scriptscriptstyle{T}}}X)^{-1}\right)
    ,\\
    \pi(\sigma\mid y)
    &=2\left\{\frac{y^{\mathrm{\scriptscriptstyle{T}}}(y-X \hat{\beta}^{OLS})}{2}\right\}^{\frac{n-p+1}{2}}\left\{\Gamma\left(\frac{n-p+1}{2}\right)\right\}^{-1}\\
    &\qquad\qquad\times\sigma^{-n+p-2}\exp\left\{-\frac{1}{2\sigma^{2}}y^{\mathrm{\scriptscriptstyle{T}}}(y-X \hat{\beta}^{OLS})\right\}
    ,
  \end{align*}
  and the posterior mean is given by
  \[
    \hat{\theta}=(\hat{\beta}^{B},\hat{\sigma}^{B})=\left(\hat{\beta}^{OLS},\frac{\Gamma((n-p)/2)}{\sqrt{2}\Gamma((n-p+1)/2)}\{y^{\mathrm{\scriptscriptstyle{T}}}(y-X\hat{\beta}^{OLS})\}^{1/2}\right)
    .
  \]
  This is an unbiased estimator of $(\beta,\sigma)$.
\end{example}

\begin{example}[Nested error regression model]
  We consider the nested error regression model
  \begin{align*}
    &y_{ij}=x_{ij}^{\top}\beta+v_{i}+\epsilon_{ij}
    \quad(i=1,\ldots,m;\ j=1\ldots,n_{i}),\\
    &v_{i}\sim N(0,\tau^{2})
    ,\quad
    \epsilon_{ij}\sim N(0,\sigma^{2})
    ,\quad
    v_{i}\perp\epsilon_{ij}
    ,
  \end{align*}
  where $y_{ij}$ is the response variable for each unit $j$ in area $i$, $x_{ij}$ is a $p$-dimensional non-random covariate vector, $\beta\in\mathbb{R}^{p}$ is the coefficient vector, $v_{i}$ represents the random effect for area $i$, and $\epsilon_{ij}$ is the error term. If we define
  \[
    y_{i}
    =(y_{i1},\ldots,y_{in_{i}})
    ,\quad
    x_{i}^{\top}
    =
    \begin{bmatrix}
      x_{i1}&\cdots&x_{in_{i}}
    \end{bmatrix}^{\top}
    ,\quad
    \epsilon_{i}=(\epsilon_{i1},\ldots,\epsilon_{in_{i}})
    ,
  \]
  we can express the model in vector form as
  \[
    y_{i}
    =x_{i}\beta+v_{i}\iota_{n_{i}}+\epsilon_{i}
    \quad(i=1,\ldots,m)
    ,
  \]
  where $\iota_{n_{i}}$ is an $n_{i}$-dimensional column vector of ones.
  The data are assumed to be independent across areas $i$. We consider the asymptotic setting where $m\to\infty$, with $n_{i}$ fixed. The parameter of interest is
  \[
    \theta
    =(\theta_{1},\ldots,\theta_{p},\theta_{p+1},\theta_
    {p+2})
    =(\beta,\tau^{2},\sigma^{2})
    \in\Theta=\mathbb{R}^{p}\times(0,\infty)^{2}
    .
  \]
  If we write
  \[
    V_{i}
    =V_{i}(\theta)
    =\tau^{2}\iota_{n_{i}}\iota_{n_{i}}^{\top}+\sigma^{2}I_{n_{i}}
    ,
  \]
  the log-likelihood function is given by
  \[
    \ell_{m}(\theta)
    =-\frac{1}{2}\sum_{i=1}^{m}\{\log\det(V_{i})+(y_{i}-x_{i}\beta)^{\mathrm{\scriptscriptstyle{T}}}V_{i}^{-1}(y_{i}-x_{i}\beta)\}+(const.)
    .
  \]

  We obtain an asymptotically unbiased prior using the result of Corollary~\ref{cor:Bias.IndepOfN}. The first step is to compute $\phi(\theta)$. The expression for $\phi(\theta)$ in Corollary~\ref{cor:Bias.IndepOfN} is given by
  \[
    \phi_{t}(\theta)
    =-\frac{1}{2}\sum_{r=1}^{p+2}\sum_{s=1}^{p+2}H^{rs}(\theta)A_{rst}(\theta)
    \quad(t=1,\ldots,p+2)
    ,
  \]
  where $A_{rs}(\theta)$ is defined as
  \[
    A_{rs}(\theta)
    =
    \begin{bmatrix}
      K_{1rs}(\theta)+2J_{1r,s}(\theta)\\
      \vdots\\
      K_{prs}(\theta)+2J_{pr,s}(\theta)\\
    \end{bmatrix}
    +\sum_{t=1}^{p+2}\sum_{u=1}^{p+2}H^{tu}(\theta)I_{su}(\theta)
    \begin{bmatrix}
      K_{rt1}(\theta)\\
      \vdots\\
      K_{rtp}(\theta)\\
    \end{bmatrix}
  \]
  and $H(\theta),I(\theta),J(\theta)$ and $K(\theta)$ are defined such that Assumption~\ref{asm:Limits2} is satisfied. By a simple calculation, we get
  \[
    H(\theta)
    =I(\theta)
    =\lim_{m\to\infty}\frac{1}{m}\sum_{i=1}^{m}
    \begin{bmatrix}
      x_{i}^{\mathrm{\scriptscriptstyle{T}}}V_{i}^{-1}x_{i}&0&0\\
      0&2^{-1}n_{i}^{2}(\sigma^{2}+n_{i}\tau)^{-2}&2^{-1}n_{i}(\sigma^{2}+n_{i}\tau)^{-2}\\
      0&2^{-1}n_{i}(\sigma^{2}+n_{i}\tau)^{-2}&2^{-1}\mathrm{Tr}(V_{i}^{-2})
    \end{bmatrix}
    .
  \]
  This means that the inverse of this matrix is represented as
  \[
    H^{-1}(\theta)
    =I^{-1}(\theta)
    =\begin{bmatrix}
      (\lim_{m\to\infty}m^{-1}\sum_{i=1}^{m}x_{i}^{\mathrm{\scriptscriptstyle{T}}}V_{i}^{-1}x_{i})^{-1}&0\\
      0&W(\theta)
    \end{bmatrix}
    ,
  \]
  where
  \begin{align*}
    W(\theta)
    &=w(\theta)\lim_{m\to\infty}\frac{1}{m}\sum_{i=1}^{m}
    \begin{bmatrix}
      \mathrm{Tr}(V_{i}^{-2})&-n_{i}(\sigma^{2}+n_{i}\tau)^{-2}\\
      -n_{i}(\sigma^{2}+n_{i}\tau)^{-2}&n_{i}^{2}(\sigma^{2}+n_{i}\tau)^{-2}
    \end{bmatrix}
    ,\\
    2w^{-1}(\theta)
    &=\left\{\lim_{m\to\infty}\frac{1}{m}\sum_{i=1}^{m}\frac{n_{i}^{2}}{(\sigma^{2}+n_{i}\tau^{2})^{2}}\right\}\left\{\lim_{m\to\infty}\frac{1}{m}\sum_{i=1}^{m}\mathrm{Tr}(V_{i}^{-2})\right\}\\
    &\qquad\qquad-\left\{\lim_{m\to\infty}\frac{1}{m}\sum_{i=1}^{m}\frac{n_{i}}{(\sigma^{2}+n_{i}\tau^{2})^{2}}\right\}^{2}
    .
  \end{align*}
  Since $H(\theta)$ and $I(\theta)$ are identical, by Remark~\ref{rem:H=I}, we have
  \[
    \phi_{t}(\theta)
    =-\sum_{r=1}^{p+2}\sum_{s=1}^{p+2}H^{rs}(\theta)\{K_{trs}(\theta)+J_{tr,s}(\theta)\}
    \quad(t=1,\ldots,p+2)
    .
  \]
  Furthermore, since $H^{-1}(\theta)$ is block diagonal, we have
  \begin{equation}\label{eq:NER.phi_t}
    \phi_{t}(\theta)
    =-\sum_{r=1}^{p}\sum_{s=1}^{p}H^{rs}(\theta)\{K_{trs}(\theta)+J_{tr,s}(\theta)\}-\sum_{r=p+1}^{p+2}\sum_{s=p+1}^{p+2}H^{rs}(\theta)\{K_{trs}(\theta)+J_{tr,s}(\theta)\}
  \end{equation}
  for $t=1,\ldots,p+2$. Thus, we only need to compute $K_{rst}(\theta),J_{tr,s}(\theta)$ for all $t=1,\ldots,p+2$ and $r,s=1,\ldots,p$, or $r,s=p+1,p+2$. This can be done by a simple calculation; for $r,s=1,\ldots,p$, we get
  \[
    J_{tr,s}(\theta)
    =
    \begin{cases}
      0&(t=1,\ldots,p),\\
      -\lim_{m\to\infty}\frac{1}{m}\sum_{i=1}^{m}(\sigma^{2}+n_{i}\tau^{2})^{-2}\iota_{n_{i}}^{\mathrm{\scriptscriptstyle{T}}}x_{i\cdot,r}x_{i\cdot,s}^{\mathrm{\scriptscriptstyle{T}}}\iota_{n_{i}}&(t=p+1),\\
      -\lim_{m\to\infty}\frac{1}{m}\sum_{i=1}^{m}x_{i\cdot,r}^{\mathrm{\scriptscriptstyle{T}}}V_{i}^{-2}x_{i\cdot,s}^{\mathrm{\scriptscriptstyle{T}}}&(t=p+2)
      ,
    \end{cases}
  \]
  and $K_{rst}(\theta)=-J_{tr,s}(\theta)$, where $x_{i\cdot,r}$ is the $r$th column vector of $x_{i}$. For $r,s=p+1,p+2$, we have
  \begin{align*}
    J_{t(p+1),p+1}(\theta)
    &=
    \begin{cases}
      0&(t=1,\ldots,p),\\
      -\lim_{m\to\infty}\frac{1}{m}\sum_{i=1}^{m}(\sigma^{2}+n_{i}\tau^{2})^{-3}n_{i}^{3}&(t=p+1),\\
      -\lim_{m\to\infty}\frac{1}{m}\sum_{i=1}^{m}(\sigma^{2}+n_{i}\tau^{2})^{-3}n_{i}^{2}&(t=p+2)
      ,
    \end{cases}
    \\
    K_{(p+1)(p+1)t}(\theta)
    &=-2J_{t(p+1),p+1}(\theta)
    ,\\
    J_{t(p+1),p+2}(\theta)
    &=
    \begin{cases}
      0&(t=1,\ldots,p),\\
      -\lim_{m\to\infty}\frac{1}{m}\sum_{i=1}^{m}(\sigma^{2}+n_{i}\tau^{2})^{-3}n_{i}^{2}&(t=p+1),\\
      -\lim_{m\to\infty}\frac{1}{m}\sum_{i=1}^{m}(\sigma^{2}+n_{i}\tau^{2})^{-3}n_{i}&(t=p+2)
      ,
    \end{cases}
    \\
    K_{(p+1)(p+2)t}(\theta)
    &=-2J_{t(p+1),p+2}(\theta)
    ,\\
    J_{t(p+2),p+1}(\theta)
    &=J_{t(p+1),p+2}(\theta)
    ,\\
    K_{(p+2)(p+1)t}(\theta)
    &=K_{(p+1)(p+2)t}(\theta)
    ,\\
    J_{t(p+2),p+2}(\theta)
    &=
    \begin{cases}
      0&(t=1,\ldots,p),\\
      -\lim_{m\to\infty}\frac{1}{m}\sum_{i=1}^{m}(\sigma^{2}+n_{i}\tau^{2})^{-3}n_{i}&(t=p+1),\\
      -\lim_{m\to\infty}\frac{1}{m}\sum_{i=1}^{m}\mathrm{Tr}(V_{i}^{-3})&(t=p+2)
      ,
    \end{cases}
    \\
    K_{(p+2)(p+2)t}(\theta)
    &=-2J_{t(p+2),p+2}(\theta)
    .
  \end{align*}
  Substituting these values into \eqref{eq:NER.phi_t} yields
  \[
    \phi_{t}(\theta)
    =0
    \quad(t=1,\ldots,p)
  \]
  and
  \begin{align*}
    -w^{-1}(\theta)\phi_{p+1}(\theta)
    &=\left\{\lim_{m\to\infty}\frac{1}{m}\sum_{i=1}^{m}\mathrm{Tr}(V_{i}^{-2})\right\}\left\{\lim_{m\to\infty}\frac{1}{m}\sum_{i=1}^{m}\frac{n_{i}^{3}}{(\sigma^{2}+n_{i}\tau^{2})^{3}}\right\}\\
    &\quad-2\left\{\lim_{m\to\infty}\frac{1}{m}\sum_{i=1}^{m}\frac{n_{i}^{2}}{(\sigma^{2}+n_{i}\tau^{2})^{3}}\right\}\left\{\lim_{m\to\infty}\frac{1}{m}\sum_{i=1}^{m}\frac{n_{i}}{(\sigma^{2}+n_{i}\tau^{2})^{2}}\right\}\\
    &\quad+\left\{\lim_{m\to\infty}\frac{1}{m}\sum_{i=1}^{m}\frac{n_{i}}{(\sigma^{2}+n_{i}\tau^{2})^{3}}\right\}\left\{\lim_{m\to\infty}\frac{1}{m}\sum_{i=1}^{m}\frac{n_{i}^{2}}{(\sigma^{2}+n_{i}\tau^{2})^{2}}\right\}
    ,
  \end{align*}
  \begin{align*}
    -w^{-1}(\theta)\phi_{p+2}(\theta)
    &=\left\{\lim_{m\to\infty}\frac{1}{m}\sum_{i=1}^{m}\mathrm{Tr}(V_{i}^{-2})\right\}\left\{\lim_{m\to\infty}\frac{1}{m}\sum_{i=1}^{m}\frac{n_{i}^{2}}{(\sigma^{2}+n_{i}\tau^{2})^{3}}\right\}\\
    &\quad-2\left\{\lim_{m\to\infty}\frac{1}{m}\sum_{i=1}^{m}\frac{n_{i}}{(\sigma^{2}+n_{i}\tau^{2})^{3}}\right\}\left\{\lim_{m\to\infty}\frac{1}{m}\sum_{i=1}^{m}\frac{n_{i}}{(\sigma^{2}+n_{i}\tau^{2})^{2}}\right\}\\
    &\quad+\left\{\lim_{m\to\infty}\frac{1}{m}\sum_{i=1}^{m}\mathrm{Tr}(V_{i}^{-3})\right\}\left\{\lim_{m\to\infty}\frac{1}{m}\sum_{i=1}^{m}\frac{n_{i}^{2}}{(\sigma^{2}+n_{i}\tau^{2})^{2}}\right\}
    .
  \end{align*}
  It can be verified that $\phi_{p+1}(\theta)$ and $\phi_{p+2}(\theta)$ are expressed as
  \[
    \phi_{p+1}(\theta)=\frac{\partial}{\partial\tau^{2}}\log g(\tau^{2},\sigma^{2})
    ,\quad
    \phi_{p+2}(\theta)=\frac{\partial}{\partial\sigma^{2}}\log\{\sigma^{-4}g(\tau^{2},\sigma^{2})\}
    ,
  \]
  where $g(\tau^{2},\sigma^{2})$ is given by
  \[
    g(\tau^{2},\sigma^{2})
    =g_{1}(\tau^{2},\sigma^{2})g_{2}(\tau^{2},\sigma^{2})\sigma^{4}+2g_{1}(\tau^{2},\sigma^{2})g_{3}(\tau^{2},\sigma^{2})\sigma^{2}\tau^{2}+g_{3}(\tau^{2},\sigma^{2})g_{4}(\tau^{2},\sigma^{2})\tau^{4}
    ,
  \]
  \begin{align*}
    &g_{1}(\tau^{2},\sigma^{2})
    =\lim_{m\to\infty}\frac{1}{m}\sum_{i=1}^{m}\frac{n_{i}(n_{i}-1)}{(\sigma^{2}+n_{i}\tau^{2})^{2}}
    ,\quad
    g_{2}(\tau^{2},\sigma^{2})
    =\lim_{m\to\infty}\frac{1}{m}\sum_{i=1}^{m}\frac{n_{i}}{(\sigma^{2}+n_{i}\tau^{2})^{2}}
    ,\\
    &g_{3}(\tau^{2},\sigma^{2})
    =\lim_{m\to\infty}\frac{1}{m}\sum_{i=1}^{m}\frac{n_{i}^{2}}{(\sigma^{2}+n_{i}\tau^{2})^{2}}
    ,\quad
    g_{4}(\tau^{2},\sigma^{2})
    =\lim_{m\to\infty}\frac{1}{m}\sum_{i=1}^{m}\frac{n_{i}^{2}(n_{i}-1)}{(\sigma^{2}+n_{i}\tau^{2})^{2}}
    .
  \end{align*}
  This representation of $\phi(\theta)$ implies that the integrability condition is satisfied, and thus an asymptotically unbiased prior exists.

  Finally, we compute the asymptotically unbiased prior according to Corollary~\ref{cor:ConstructionOfPrior}. For arbitrary constants $c_{1},c_{2}>0$, we define
  \[
    \psi_{p+1}(\theta_{p+1})=\frac{\partial}{\partial\tau^{2}}\log g(\tau^{2},\sigma^{2}),\quad
    \psi_{p+2}(\theta_{p+2})=\frac{\partial}{\partial\sigma^{2}}\log\{\sigma^{-4}g(c_{1},\sigma^{2})\}
    .
  \]
  Then, the asymptotically unbiased prior is
  \[
    \pi(\theta)
    \propto\exp\left(\int_{c_{1}}^{\tau^{2}}\psi_{p+1}(z)dz+\int_{c_{2}}^{\sigma^{2}}\psi_{p+2}(z)dz\right)
    \propto\sigma^{-4}g(\tau^{2},\sigma^{2})
    .
  \]
\end{example}

\begin{example}[Balanced nested error regression model]\label{ex:BalancedNER}
  We consider the case where $n_{i}\equiv n$ for all $i$ in the previous example. In this case, the calculation of the limit terms in $g(\tau^{2},\sigma^{2})$ is simplified, and $g(\tau^{2},\sigma^{2})$ reduces to
  \[
    g(\tau^{2},\sigma^{2})=n^{2}(n-1)(\sigma^{2}+n\tau^{2})^{-2}
    .
  \]
  Therefore, the asymptotically unbiased prior simplifies to
  \begin{equation}\label{eq:NER.Prior.CommonN}
    \pi(\theta)\propto\sigma^{-4}(\sigma^{2}+n\tau^{2})^{-2}
    .
  \end{equation}
\end{example}

\section{Discussion on the asymptotically unbiased prior for the balanced nested error regression model}\label{sec:DiscussNER}

\subsection{Posterior propriety}\label{app:NERPosteriorPropriety}

In this section, we prove the posterior propriety of the asymptotically unbiased prior for the balanced nested error regression model, which is given by \eqref{eq:NER.Prior.CommonN}. The corresponding posterior distribution of $\theta$ is given by
\begin{equation}\label{eq:NER.Posterior}
  \pi(\theta\mid\mathcal{D})
  \propto(\sigma^{2})^{-2-\frac{(n-1)m}{2}}(\sigma^{2}+n\tau^{2})^{-2-\frac{m}{2}}\exp\left\{-\frac{1}{2}\sum_{i=1}^{m}(y_{i}-x_{i}\beta)^{\mathrm{\scriptscriptstyle{T}}}V^{-1}(y_{i}-x_{i}\beta)\right\}
  ,
\end{equation}
where $V=\tau^{2}\iota_{n}\iota_{n}^{\mathrm{\scriptscriptstyle{T}}}+\sigma^{2}I_{n}$ is the covariance matrix of $y_{i}-x_{i}\beta$ with $\iota_{n}$ being an $n$-dimensional column vector of ones and $\mathcal{D}$ represents the data. We further consider a transformation of $\theta$ defined as $\bar{\theta}=(\beta,\rho,\sigma^{2})$ with $\rho\equiv\sigma^{2}/(\sigma^{2}+n\tau^{2})\in(0,1)$. The corresponding posterior distribution of $\bar{\theta}$ is
\begin{equation}\label{eq:NER.Posterior.ThetaBar}
  \bar{\pi}(\bar{\theta}\mid\mathcal{D})
  \propto(\sigma^{2})^{-3-\frac{nm}{2}}\rho^{\frac{m}{2}}\exp\left\{-\frac{1}{2}\sum_{i=1}^{m}(y_{i}-x_{i}\beta)^{\mathrm{\scriptscriptstyle{T}}}\bar{V}^{-1}(y_{i}-x_{i}\beta)\right\}
  \equiv\bar{\pi}(\bar{\theta}\mid\mathcal{D})
  ,
\end{equation}
where $\bar{V}^{-1}$ is defined as $\bar{V}^{-1}=\sigma^{-2}\{I_{n}-(1-\rho)\iota_{n}\iota_{n}^{\mathrm{\scriptscriptstyle{T}}}/n\}$.

To establish the posterior propriety, we need to show that the integral of $\bar{\pi}(\bar{\theta}\mid\mathcal{D})$ over $\bar{\Theta}$ is finite.
\begin{assumption}\label{asm:NERPosteriorPropriety}
  Assume that the following conditions are satisfied: (i) there exists $i\in\{1,\ldots,m\}$ such that $x_{i}$ has full column rank; (ii) there exists $i\in\{1,\ldots,m\}$ such that $[y_{i}\ x_{i}]$ has full column rank; (iii) there exists $i\in\{1,\ldots,m\}$ such that the sample variance covariance matrix $n^{-1}\sum_{j=1}^{n}(x_{ij}-n^{-1}\sum_{j=1}^{n}x_{ij})(x_{ij}-n^{-1}\sum_{j=1}^{n}x_{ij})^{\mathrm{\scriptscriptstyle{T}}}$ is positive semidefinite; (iv) there exists $i\in\{1,\ldots,m\}$ such that the sample variance covariance matrix
  \[
    n^{-1}\sum_{j=1}^{n}\left(
    \begin{bmatrix}y_{ij}\\x_{ij}\end{bmatrix}
    -n^{-1}\sum_{j=1}^{n}
    \begin{bmatrix}y_{ij}\\x_{ij}\end{bmatrix}
    \right)\left(
    \begin{bmatrix}y_{ij}\\x_{ij}\end{bmatrix}
    -n^{-1}\sum_{j=1}^{n}
    \begin{bmatrix}y_{ij}\\x_{ij}\end{bmatrix}
    \right)^{\mathrm{\scriptscriptstyle{T}}}
  \]
  is positive semidefinite.
\end{assumption}

\begin{proposition}
  If Assumption~\ref{asm:NERPosteriorPropriety} is satisfied, we have
  \[
    \int_{\bar{\Theta}}\bar{\pi}(\bar{\theta}\mid\mathcal{D})d\bar{\theta}<\infty
    .
  \]
\end{proposition}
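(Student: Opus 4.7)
The plan is to carry out the integration of $\bar{\pi}(\bar{\theta}\mid\mathcal{D})$ in the order $\beta,\sigma^{2},\rho$, after exploiting the orthogonal decomposition $I_{n}=P+Q$ with $P=I_{n}-n^{-1}\iota_{n}\iota_{n}^{\mathrm{\scriptscriptstyle{T}}}$ and $Q=n^{-1}\iota_{n}\iota_{n}^{\mathrm{\scriptscriptstyle{T}}}$. Since $\bar{V}^{-1}=\sigma^{-2}(P+\rho Q)$, the quadratic exponent in \eqref{eq:NER.Posterior.ThetaBar} rewrites as
\[
  -\frac{1}{2\sigma^{2}}\bigl\{\|W-Z\beta\|^{2}+n\rho\|\bar{y}-\bar{X}\beta\|^{2}\bigr\},
\]
where $W$ and $Z$ stack the within-area centered quantities $Py_{i}$ and $Px_{i}$ over $i=1,\ldots,m$, and $\bar{y}$ and $\bar{X}$ stack the area means $\bar{y}_{i}=n^{-1}\iota_{n}^{\mathrm{\scriptscriptstyle{T}}}y_{i}$ and $\bar{x}_{i}^{\mathrm{\scriptscriptstyle{T}}}=n^{-1}\iota_{n}^{\mathrm{\scriptscriptstyle{T}}}x_{i}$. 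This is a weighted linear-regression criterion in $\beta$ with $\rho$ tuning the weight on the between-area component.

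First I would integrate out $\beta$ as a Gaussian integral with precision matrix $A_{\rho}=Z^{\mathrm{\scriptscriptstyle{T}}}Z+n\rho\bar{X}^{\mathrm{\scriptscriptstyle{T}}}\bar{X}$, producing a factor proportional to $(\sigma^{2})^{p/2}\det(A_{\rho})^{-1/2}\exp\{-C_{\rho}/(2\sigma^{2})\}$ with
\[
  C_{\rho}
  =\min_{\beta}\bigl\{\|W-Z\beta\|^{2}+n\rho\|\bar{y}-\bar{X}\beta\|^{2}\bigr\}.
\]
Validity requires $A_{\rho}\succ 0$ on $(0,1)$; condition (iii) of Assumption~\ref{asm:NERPosteriorPropriety} yields $Z^{\mathrm{\scriptscriptstyle{T}}}Z=\sum_{i}x_{i}^{\mathrm{\scriptscriptstyle{T}}}Px_{i}\succ 0$, and since $\bar{X}^{\mathrm{\scriptscriptstyle{T}}}\bar{X}\succeq 0$, $A_{\rho}\succeq Z^{\mathrm{\scriptscriptstyle{T}}}Z\succ 0$ throughout. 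Integrating next over $\sigma^{2}$, the remaining kernel $(\sigma^{2})^{-3-nm/2+p/2}\exp\{-C_{\rho}/(2\sigma^{2})\}$ is inverse-gamma in shape $2+nm/2-p/2>0$ and scale $C_{\rho}/2$, producing a factor proportional to $C_{\rho}^{-(2+nm/2-p/2)}$.

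It remains to show that the residual $\rho$-integrand $\rho^{m/2}\det(A_{\rho})^{-1/2}C_{\rho}^{-(2+nm/2-p/2)}$ is bounded on $(0,1)$. The factor $\rho^{m/2}$ is bounded; $\det(A_{\rho})^{-1/2}\leq\det(Z^{\mathrm{\scriptscriptstyle{T}}}Z)^{-1/2}$ by the uniform lower bound just established. Since $C_{\rho}$ is the minimum over $\beta$ of a sum of two non-negative quadratics whose second summand is multiplied by $\rho\geq 0$, $\rho\mapsto C_{\rho}$ is non-decreasing, so $C_{\rho}\geq C_{0}=\min_{\beta}\|W-Z\beta\|^{2}$. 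Condition (iv) of Assumption~\ref{asm:NERPosteriorPropriety} guarantees that for some $i$ the centered joint block $P[y_{i}\ x_{i}]$ has full column rank, hence $W$ does not lie in the column space of $Z$ and $C_{0}>0$. The $\rho$-integrand is therefore uniformly bounded on the bounded interval $(0,1)$, and the triple integral is finite.

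The main obstacle is securing the uniform-in-$\rho$ positivity of $\det(A_{\rho})$ and $C_{\rho}$; both could in principle degenerate as $\rho\to 0^{+}$, but the $P,Q$ split cleanly localizes this to a pair of rank conditions on the centered designs $Z$ and $P[y_{i}\ x_{i}]$, supplied precisely by conditions (iii) and (iv) of Assumption~\ref{asm:NERPosteriorPropriety}; conditions (i) and (ii) play the analogous role for the global (uncentered) designs and protect against degeneracies arising in alternative orderings of the integration.
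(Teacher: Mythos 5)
Your proposal is correct and follows essentially the same route as the paper: integrate out $\beta$ as a Gaussian, then $\sigma^{2}$ as an inverse gamma, and reduce posterior propriety to boundedness of the remaining $\rho$-integrand on the bounded interval $(0,1)$ (your $P+\rho Q$ split is just the paper's $\bar{Q}(\rho)$ written blockwise, and your $C_{\rho}$ is the paper's $h(\rho)$). The only difference is in the final bounding step: you obtain uniform-in-$\rho$ lower bounds via $A_{\rho}\succeq Z^{\mathrm{\scriptscriptstyle{T}}}Z\succ0$ and the monotonicity $C_{\rho}\geq C_{0}>0$, using only conditions (iii)--(iv) of Assumption~\ref{asm:NERPosteriorPropriety}, whereas the paper establishes positive definiteness separately for $\rho>0$ (conditions (i)--(ii)) and $\rho=0$ (conditions (iii)--(iv)) and then appeals to continuity of the integrand on the bounded domain.
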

\begin{proof}
  For notational simplicity, we define $y=[y_{1}^{\mathrm{\scriptscriptstyle{T}}}\ \cdots\ y_{m}^{\mathrm{\scriptscriptstyle{T}}}]^{\mathrm{\scriptscriptstyle{T}}}$ and $X=[x_{1}^{\mathrm{\scriptscriptstyle{T}}}\ \cdots\ x_{m}^{\mathrm{\scriptscriptstyle{T}}}]^{\mathrm{\scriptscriptstyle{T}}}$. Using this notation, the posterior distribution $\bar{\pi}(\bar{\theta}\mid\mathcal{D})$ can expressed as
  \[
    \bar{\pi}(\bar{\theta}\mid\mathcal{D})
    =(\sigma^{2})^{-3-\frac{nm}{2}}\rho^{\frac{m}{2}}\exp\left\{-\frac{1}{2\sigma^{2}}(y-X\beta)^{\mathrm{\scriptscriptstyle{T}}}\bar{Q}(\rho)(y-X\beta)\right\}
    ,
  \]
  where $\bar{Q}(\rho)$ is a block diagonal matrix with $m$ blocks, each given by $I_{n}-(1-\rho)\iota_{n}\iota_{n}^{\mathrm{\scriptscriptstyle{T}}}/n$. Since $X^{\mathrm{\scriptscriptstyle{T}}}\bar{Q}(\rho)X$ is positive definite for $\rho\in[0,1]$ by Lemmas~\ref{lem:Q(rho)>0} and \ref{lem:Q(0)>0}, we can integrate out $\beta$ as
  \begin{align*}
    &\int_{\mathbb{R}^{p}}\exp\left\{-\frac{1}{2\sigma^{2}}(y-X\beta)^{\mathrm{\scriptscriptstyle{T}}}\bar{Q}(\rho)(y-X\beta)\right\}d\beta\\
    &=(2\pi\sigma^{2})^{p/2}\det\{X^{\mathrm{\scriptscriptstyle{T}}}\bar{Q}(\rho)X\}^{-1/2}\exp\left(-\frac{1}{2\sigma^{2}}\left[y^{\mathrm{\scriptscriptstyle{T}}}\bar{Q}(\rho)y-y^{\mathrm{\scriptscriptstyle{T}}}\bar{Q}(\rho)X\{X^{\mathrm{\scriptscriptstyle{T}}}\bar{Q}(\rho)X\}^{-1}X^{\mathrm{\scriptscriptstyle{T}}}\bar{Q}(\rho)y\right]\right)\\
    &=(2\pi\sigma^{2})^{p/2}\det\{X^{\mathrm{\scriptscriptstyle{T}}}\bar{Q}(\rho)X\}^{-1/2}\exp\left\{-\frac{1}{2\sigma^{2}}h(\rho)\right\}
    ,
  \end{align*}
  where $h(\rho)$ is given by
  \[
    h(\rho)
    =\det\{X^{\mathrm{\scriptscriptstyle{T}}}\bar{Q}(\rho)X\}^{-1}\det\left\{\begin{bmatrix}y^{\mathrm{\scriptscriptstyle{T}}}\\X^{\mathrm{\scriptscriptstyle{T}}}\end{bmatrix}\bar{Q}(\rho)\begin{bmatrix}y&X\end{bmatrix}\right\}
    .
  \]
  Since Lemma~\ref{lem:Q(rho)>0} also implies
  \[
    \det\{X^{\mathrm{\scriptscriptstyle{T}}}\bar{Q}(\rho)X\}>0
    ,\quad
    \det\left\{\begin{bmatrix}y^{\mathrm{\scriptscriptstyle{T}}}\\X^{\mathrm{\scriptscriptstyle{T}}}\end{bmatrix}\bar{Q}(\rho)\begin{bmatrix}y&X\end{bmatrix}\right\}>0
    ,
  \]
  we can express the integral as
  \begin{align*}
    &\int_{\bar{\Theta}}\bar{\pi}(\bar{\theta}\mid\mathcal{D})d\bar{\theta}\\
    &=(2\pi)^{p/2}\int_{0}^{1}\int_{0}^{\infty}(\sigma^{2})^{-3-(nm-p)/2}\rho^{m/2}\det\{X^{\mathrm{\scriptscriptstyle{T}}}\bar{Q}(\rho)X\}^{-1/2}\exp\left\{-\frac{1}{2\sigma^{2}}h(\rho)\right\}d\sigma^{2}d\rho\\
    &=(2\pi)^{p/2}2^{2+(nm-p)/2}\Gamma\left(2+(nm-p)/2\right)\int_{0}^{1}\rho^{m/2}\det\{X^{\mathrm{\scriptscriptstyle{T}}}\bar{Q}(\rho)X\}^{-1/2}h(\rho)^{-\{2+(nm-p)/2\}}d\rho
    .
  \end{align*}
  Since the integrand is continuous and the domain of integration is bounded in the last expression, the integral is finite. Therefore, we can conclude that the posterior distribution is proper.
\end{proof}

\begin{lemma}\label{lem:Q(rho)>0}
  Suppose conditions (i) and (ii) of Assumption~\ref{asm:NERPosteriorPropriety} hold. Then, for any $\rho>0$, we have
  \[
    X^{\mathrm{\scriptscriptstyle{T}}}\bar{Q}(\rho)X>0
    ,\quad
    \begin{bmatrix}
      y^{\mathrm{\scriptscriptstyle{T}}}\\
      X^{\mathrm{\scriptscriptstyle{T}}}
    \end{bmatrix}
    \bar{Q}(\rho)
    \begin{bmatrix}
      y&X
    \end{bmatrix}
    >0
    .
  \]
\end{lemma}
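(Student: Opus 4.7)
The overall strategy is to separate the problem into two parts: (a) showing that $\bar{Q}(\rho)$ itself is positive definite whenever $\rho>0$, and (b) showing that conditions (i) and (ii) force $X$ and $[y\ X]$ to have full column rank. Once these are in hand, positive definiteness of the two claimed matrices is immediate: for any matrix $M$ with full column rank and any vector $v\neq 0$, the vector $Mv$ is nonzero, so $v^{\mathrm{\scriptscriptstyle{T}}}M^{\mathrm{\scriptscriptstyle{T}}}\bar{Q}(\rho)Mv=(Mv)^{\mathrm{\scriptscriptstyle{T}}}\bar{Q}(\rho)(Mv)>0$.

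For part (a), I would analyze the single $n\times n$ block $B(\rho)=I_{n}-(1-\rho)\iota_{n}\iota_{n}^{\mathrm{\scriptscriptstyle{T}}}/n$ and extend by block diagonality. Setting $P=\iota_{n}\iota_{n}^{\mathrm{\scriptscriptstyle{T}}}/n$, the identity $\iota_{n}^{\mathrm{\scriptscriptstyle{T}}}\iota_{n}=n$ shows that $P^{2}=P$, so $P$ is the orthogonal projection onto $\mathrm{span}(\iota_{n})$ and $I_{n}-P$ is the orthogonal projection onto $\mathrm{span}(\iota_{n})^{\perp}$. Writing
\[
B(\rho)=\rho P+(I_{n}-P),
\]
one sees that $B(\rho)$ acts as multiplication by $\rho$ on $\mathrm{span}(\iota_{n})$ and by $1$ on $\mathrm{span}(\iota_{n})^{\perp}$, hence has eigenvalues $\rho$ (multiplicity one) and $1$ (multiplicity $n-1$). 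For $\rho>0$ these are strictly positive, so $B(\rho)$ is positive definite, and since $\bar{Q}(\rho)=\mathrm{diag}(B(\rho),\ldots,B(\rho))$, so is $\bar{Q}(\rho)$.

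For part (b), under condition (i) there exists $i_{0}$ with $x_{i_{0}}$ of full column rank $p$, so $x_{i_{0}}^{\mathrm{\scriptscriptstyle{T}}}x_{i_{0}}\succ 0$; then $X^{\mathrm{\scriptscriptstyle{T}}}X=\sum_{i=1}^{m}x_{i}^{\mathrm{\scriptscriptstyle{T}}}x_{i}\succeq x_{i_{0}}^{\mathrm{\scriptscriptstyle{T}}}x_{i_{0}}\succ 0$, so $X$ has full column rank. The argument for $[y\ X]$ under condition (ii) is identical, replacing $x_{i_{0}}$ with $[y_{i_{0}}\ x_{i_{0}}]$. Combining parts (a) and (b) with the $v\mapsto Mv$ argument from the first paragraph yields both conclusions.

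There is no genuinely hard step here; the content is the spectral decomposition of $B(\rho)$ into the $\iota_{n}$-direction and its orthogonal complement, which cleanly exhibits positivity for $\rho>0$. The only point to watch is that conditions (i) and (ii) are assumed only for some index $i$, not for $X$ and $[y\ X]$ globally, so one must spell out that a single full-rank block is enough to make the stacked Gram matrix positive definite.
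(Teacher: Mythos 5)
Your proof is correct and follows essentially the same route as the paper: both rest on establishing positive definiteness of the single block $I_{n}-(1-\rho)\iota_{n}\iota_{n}^{\mathrm{\scriptscriptstyle{T}}}/n$ for $\rho>0$ and then exploiting that conditions (i) and (ii) supply one area whose design block has full column rank. The only cosmetic differences are that the paper verifies the block's positivity via the Cauchy--Schwarz inequality rather than your projection/eigenvalue decomposition $\rho P+(I_{n}-P)$, and it argues directly with the area-wise sums $\sum_{i}x_{i}^{\mathrm{\scriptscriptstyle{T}}}(I_{n}-\tfrac{1-\rho}{n}\iota_{n}\iota_{n}^{\mathrm{\scriptscriptstyle{T}}})x_{i}$ (all terms positive semidefinite, one positive definite) instead of first lifting full column rank to the stacked matrices $X$ and $[y\ X]$.
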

\begin{proof}
  We first show the positive definiteness of $I_{n}-(1-\rho)\iota_{n}\iota_{n}^{\mathrm{\scriptscriptstyle{T}}}/n$. Indeed, for any non-zero vector $b\in\mathbb{R}^{n}\setminus\{0\}$, by the Cauchy--Schwarz inequality, we have
  \[
    b^{\mathrm{\scriptscriptstyle{T}}}\left(I_{n}-\frac{1-\rho}{n}\iota_{n}\iota_{n}^{\mathrm{\scriptscriptstyle{T}}}\right)b
    =\|b\|^{2}-\frac{1-\rho}{n}(\iota_{n}^{\mathrm{\scriptscriptstyle{T}}}b)^{2}
    \geq\|b\|^{2}-\frac{1-\rho}{n}\|\iota_{n}\|^{2}\|b\|^{2}
    =\rho\|b\|^{2}
    >0
    .
  \]
  Next, we note that the matrices $X^{\mathrm{\scriptscriptstyle{T}}}\bar{Q}(\rho)X$ and $[y\ X]^{\mathrm{\scriptscriptstyle{T}}}\bar{Q}(\rho)[y\ X]$ can be expressed as
  \begin{align*}
    X^{\mathrm{\scriptscriptstyle{T}}}\bar{Q}(\rho)X
    &=\sum_{i=1}^{m}x_{i}^{\mathrm{\scriptscriptstyle{T}}}\left(I_{n}-\frac{1-\rho}{n}\iota_{n}\iota_{n}^{\mathrm{\scriptscriptstyle{T}}}\right)x_{i}
    ,\\
    \begin{bmatrix}
      y^{\mathrm{\scriptscriptstyle{T}}}\\
      X^{\mathrm{\scriptscriptstyle{T}}}
    \end{bmatrix}
    \bar{Q}(\rho)
    \begin{bmatrix}
      y&X
    \end{bmatrix}
    &=\sum_{i=1}^{m}
    \begin{bmatrix}
      y_{i}^{\mathrm{\scriptscriptstyle{T}}}\\
      x_{i}^{\mathrm{\scriptscriptstyle{T}}}
    \end{bmatrix}
    \left(I_{n}-\frac{1-\rho}{n}\iota_{n}\iota_{n}^{\mathrm{\scriptscriptstyle{T}}}\right)
    \begin{bmatrix}
      y_{i}&x_{i}
    \end{bmatrix}
    .
  \end{align*}
  Observe that $x_{i}^{\mathrm{\scriptscriptstyle{T}}}\{I_{n}-(1-\rho)\iota_{n}\iota_{n}^{\mathrm{\scriptscriptstyle{T}}}/n\}x_{i}$ and $[y_{i}\ x_{i}]^{\mathrm{\scriptscriptstyle{T}}}\{I_{n}-(1-\rho)\iota_{n}\iota_{n}^{\mathrm{\scriptscriptstyle{T}}}/n\}[y_{i}\ x_{i}]$ are positive semidefinite for each $i$. Furthermore, by conditions (i) and (ii) of Assumption~\ref{asm:NERPosteriorPropriety}, there exists $i,i'$ such that
  \[
    x_{i}^{\mathrm{\scriptscriptstyle{T}}}\left(I_{n}-\frac{1-\rho}{n}\iota_{n}\iota_{n}^{\mathrm{\scriptscriptstyle{T}}}\right)x_{i}>0
    ,\quad
    \begin{bmatrix}
      y_{i'}^{\mathrm{\scriptscriptstyle{T}}}\\x_{i'}^{\mathrm{\scriptscriptstyle{T}}}
    \end{bmatrix}
    \left(I_{n}-\frac{1-\rho}{n}\iota_{n}\iota_{n}^{\mathrm{\scriptscriptstyle{T}}}\right)
    \begin{bmatrix}
      y_{i'}&x_{i'}
    \end{bmatrix}
    >0
  \]
  hold. Therefore, we conclude that both $X^{\mathrm{\scriptscriptstyle{T}}}\bar{Q}(\rho)X$ and $[y\ X]^{\mathrm{\scriptscriptstyle{T}}}\bar{Q}(\rho)[y\ X]$ are positive definite.
\end{proof}

\begin{lemma}\label{lem:Q(0)>0}
  Suppose Assumption~\ref{asm:NERPosteriorPropriety} holds. Then for $\rho=0$, we have
  \[
    X^{\mathrm{\scriptscriptstyle{T}}}\bar{Q}(0)X>0
    ,\quad
    \begin{bmatrix}
      y^{\mathrm{\scriptscriptstyle{T}}}\\
      X^{\mathrm{\scriptscriptstyle{T}}}
    \end{bmatrix}
    \bar{Q}(0)
    \begin{bmatrix}
      y&X
    \end{bmatrix}
    >0
    .
  \]
\end{lemma}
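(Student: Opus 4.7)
The plan is to exploit the fact that, when $\rho=0$, each diagonal block of $\bar{Q}(0)$ reduces to the centering projection $C_n \equiv I_n - \iota_n \iota_n^{\mathrm{\scriptscriptstyle{T}}}/n$, which is symmetric and idempotent. Consequently, for any $n \times k$ matrix $A$, the quadratic form $A^{\mathrm{\scriptscriptstyle{T}}} C_n A = (C_n A)^{\mathrm{\scriptscriptstyle{T}}} (C_n A)$ is positive semidefinite, and it is strictly positive definite precisely when $C_n A$ has full column rank, i.e., when the columns of $A$ remain linearly independent after mean-centering across rows. This reduces everything to understanding when the within-area, column-centered design has full rank.

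First, I would decompose the two Gram matrices block by block as
\[
  X^{\mathrm{\scriptscriptstyle{T}}} \bar{Q}(0) X = \sum_{i=1}^{m} x_i^{\mathrm{\scriptscriptstyle{T}}} C_n x_i, \qquad
  \begin{bmatrix} y^{\mathrm{\scriptscriptstyle{T}}} \\ X^{\mathrm{\scriptscriptstyle{T}}} \end{bmatrix} \bar{Q}(0) \begin{bmatrix} y & X \end{bmatrix}
  = \sum_{i=1}^{m} \begin{bmatrix} y_i^{\mathrm{\scriptscriptstyle{T}}} \\ x_i^{\mathrm{\scriptscriptstyle{T}}} \end{bmatrix} C_n \begin{bmatrix} y_i & x_i \end{bmatrix},
\]
each summand being positive semidefinite by the preceding observation. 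Next, I would recognize $n^{-1} x_i^{\mathrm{\scriptscriptstyle{T}}} C_n x_i$ as exactly the within-area sample variance covariance matrix of $\{x_{ij}\}_{j=1}^{n}$ appearing in condition (iii) of Assumption~\ref{asm:NERPosteriorPropriety}, and similarly identify the augmented summands with the within-area sample variance covariance matrix of $\{(y_{ij}, x_{ij})\}_{j=1}^{n}$ in condition (iv). Interpreted as strict positive-definiteness requirements (as the lemma's conclusion demands), these assumptions furnish indices $i, i' \in \{1,\ldots,m\}$ for which the respective summand is itself positive definite; since the sum of positive semidefinite matrices with at least one strictly positive definite summand is strictly positive definite, both desired conclusions follow.

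The main subtle point is the interpretation of conditions (iii) and (iv): as literally stated they only require positive semidefiniteness, a property every sample variance covariance matrix enjoys automatically, so some strengthening is needed for the lemma to have content. The natural reading, which the conclusion forces, is that they encode genuine no-collinearity of the within-area design for at least one area; with that reading in place, the argument is essentially the $\rho \downarrow 0$ analogue of Lemma~\ref{lem:Q(rho)>0}, where the strict lower bound $\rho\|b\|^{2}$ used there degenerates to zero and must be replaced by the strict within-area variation supplied by conditions (iii) and (iv).
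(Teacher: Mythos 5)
Your proposal is correct and follows essentially the same route as the paper's proof: decompose the Gram matrices into per-area blocks involving the centering matrix $I_{n}-\iota_{n}\iota_{n}^{\mathrm{\scriptscriptstyle{T}}}/n$, note each block is positive semidefinite, and use conditions (iii) and (iv) of Assumption~\ref{asm:NERPosteriorPropriety} (read as strict positive definiteness, which the paper's proof also implicitly does despite the ``semidefinite'' wording) to obtain one strictly positive definite summand, making the sums positive definite. Your observation that the stated conditions must be meant as definiteness is a fair catch and consistent with how the paper actually uses them.
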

\begin{proof}
  By the same argument as in the previous lemma, we can show that $I_{n}-\iota_{n}\iota_{n}^{\mathrm{\scriptscriptstyle{T}}}/n$ is positive semidefinite. Thus, $x_{i}^{\mathrm{\scriptscriptstyle{T}}}(I_{n}-\iota_{n}\iota_{n}^{\mathrm{\scriptscriptstyle{T}}}/n)x_{i}\geq0$ and $[y_{i}\ x_{i}]^{\mathrm{\scriptscriptstyle{T}}}(I_{n}-\frac{1}{n}\iota_{n}\iota_{n}^{\mathrm{\scriptscriptstyle{T}}}/n)[y_{i}\ x_{i}]\geq0$ hold for each $i$. Furthermore, by condition (iii) of Assumption~\ref{asm:NERPosteriorPropriety}, there exists $i$ that satisfies
  \[
    x_{i}^{\mathrm{\scriptscriptstyle{T}}}\left(I_{n}-\frac{1}{n}\iota_{n}\iota_{n}^{\mathrm{\scriptscriptstyle{T}}}\right)x_{i}
    =\sum_{j=1}^{n}\left(x_{ij}-\frac{1}{n}\sum_{j=1}^{n}x_{ij}\right)\left(x_{ij}-\frac{1}{n}\sum_{j=1}^{n}x_{ij}\right)^{\mathrm{\scriptscriptstyle{T}}}
    >0
    .
  \]
  Therefore, we have
  \[
    X^{\mathrm{\scriptscriptstyle{T}}}\bar{Q}(0)X
    =\sum_{i=1}^{m}x_{i}^{\mathrm{\scriptscriptstyle{T}}}\left(I_{n}-\frac{1}{n}\iota_{n}\iota_{n}^{\mathrm{\scriptscriptstyle{T}}}\right)x_{i}
    >0
    .
  \]
  By a similar argument, we also obtain
  \[
    \begin{bmatrix}
      y^{\mathrm{\scriptscriptstyle{T}}}\\
      X^{\mathrm{\scriptscriptstyle{T}}}
    \end{bmatrix}
    \bar{Q}(0)
    \begin{bmatrix}
      y&X
    \end{bmatrix}
    =\sum_{i=1}^{m}
    \begin{bmatrix}
      y_{i}^{\mathrm{\scriptscriptstyle{T}}}\\
      x_{i}^{\mathrm{\scriptscriptstyle{T}}}
    \end{bmatrix}
    \left(I_{n}-\frac{1}{n}\iota_{n}\iota_{n}^{\mathrm{\scriptscriptstyle{T}}}\right)
    \begin{bmatrix}
      y_{i}&x_{i}
    \end{bmatrix}
    >0
  \]
  as required.
\end{proof}

\subsection{Sampling from the posterior distribution}\label{subsec:NERPosteriorSampling}

We demonstrate a Markov chain Monte Carlo algorithm to obtain samples from the posterior distribution in \eqref{eq:NER.Posterior}. Instead of directly sampling from $\pi(\theta\mid\mathcal{D})$, we consider a transformation of $\theta$ as in the previous section, and sample from the posterior distribution $\bar{\pi}(\bar{\theta}\mid\mathcal{D})$ in \eqref{eq:NER.Posterior.ThetaBar} using a Gibbs sampler. We can show that the full conditional distributions of $\beta,\rho,$ and $\sigma^{2}$ are given by
\begin{align*}
  &\beta\mid\rho,\sigma^{2},\mathcal{D}
  \sim N\left(\left(\sum_{i=1}^{m}x_{i}^{\mathrm{\scriptscriptstyle{T}}}\bar{V}^{-1}x_{i}\right)^{-1}\left(\sum_{i=1}^{m}x_{i}^{\mathrm{\scriptscriptstyle{T}}}\bar{V}^{-1}y_{i}\right),\ \left(\sum_{i=1}^{m}x_{i}^{\mathrm{\scriptscriptstyle{T}}}\bar{V}^{-1}x_{i}\right)^{-1}\right)
  ,\\
  &\rho\mid\beta,\sigma^{2},\mathcal{D}\sim TG_{(0,1)}\left(\frac{m}{2}+1,\ \frac{1}{2n\sigma^{2}}\sum_{i=1}^{m}(y_{i}-x_{i}\beta)^{\mathrm{\scriptscriptstyle{T}}}\iota_{n}\iota_{n}^{\mathrm{\scriptscriptstyle{T}}}(y_{i}-x_{i}\beta)\right)
  ,\\
  &\sigma^{2}\mid\beta,\rho,\mathcal{D}\sim IG\left(\frac{nm}{2}+2,\ \frac{1}{2}\sum_{i=1}^{m}(y_{i}-x_{i}\beta)^{\mathrm{\scriptscriptstyle{T}}}\left(I_{n}-\frac{1-\rho}{n}\iota_{n}\iota_{n}^{\mathrm{\scriptscriptstyle{T}}}\right)(y_{i}-x_{i}\beta)\right)
  ,
\end{align*}
where $TG_{(0,1)}(a,b)$ denotes a truncated gamma distribution with shape parameter $a$ and rate parameter $b$, truncated to the interval $(0,1)$, and $IG(a,b)$ denotes an inverse gamma distribution with shape parameter $a$ and scale parameter $b$.

\section{Details of the simulation studies}\label{app:Simulation}

\begin{figure}[tb]
\centering
\begin{subfigure}{0.24\textwidth}
\begin{tikzpicture}
\begin{axis}[
    title={(a) $\beta_{1}$},
    xlabel={\(\log_{10}(m)\)},
    xmode=log, log basis x=10,
    xtick={10,100,1000}, xticklabels={2,3,4},
    yticklabel style={/pgf/number format/fixed},
    width=4cm, height=4cm,
    mark size=1pt,
    font=\footnotesize,
    xlabel style={yshift=1ex},
    title style={yshift=-0.5ex}
]
\addplot[black, solid ,mark=*, mark size=1pt] coordinates {(10,0.000425) (32,0.000074) (100,0.000191) (316,0.000067) (1000,0.000022)};
\addplot[black, dashed,mark=square*, mark size=1pt] coordinates {(10,0.000426) (32,0.000077) (100,0.000190) (316,0.000067) (1000,0.000022)};
\addplot[black, dotted,mark=triangle*, mark size=1pt] coordinates {(10,0.000343) (32,0.000081) (100,0.000194) (316,0.000061) (1000,0.000027)};
\end{axis}
\end{tikzpicture}
\end{subfigure}
\begin{subfigure}{0.24\textwidth}
\begin{tikzpicture}
\begin{axis}[
    title={(b) $\beta_{2}$},
    xlabel={\(\log_{10}(m)\)},
    xmode=log, log basis x=10,
    xtick={10,100,1000}, xticklabels={2,3,4},
    yticklabel style={/pgf/number format/fixed},
    width=4cm, height=4cm,
    mark size=1pt,
    font=\footnotesize,
    xlabel style={yshift=1ex},
    title style={yshift=-0.5ex}
]
\addplot[black, solid ,mark=*, mark size=1pt] coordinates {(10,0.000055) (32,0.000029) (100,0.000217) (316,0.000364) (1000,0.000146)};
\addplot[black, dashed,mark=square*, mark size=1pt] coordinates {(10,0.000027) (32,0.000033) (100,0.000216) (316,0.000365) (1000,0.000146)};
\addplot[black, dotted,mark=triangle*, mark size=1pt] coordinates {(10,0.000088) (32,0.000025) (100,0.000220) (316,0.000356) (1000,0.000148)};
\end{axis}
\end{tikzpicture}
\end{subfigure}
\begin{subfigure}{0.24\textwidth}
\begin{tikzpicture}
\begin{axis}[
    title={(c) $\beta_{1}$},
    xlabel={\(\log_{10}(m)\)},
    xmode=log, log basis x=10,
    xtick={10,100,1000}, xticklabels={2,3,4},
    yticklabel style={/pgf/number format/fixed},
    width=4cm, height=4cm,
    mark size=1pt,
    font=\footnotesize,
    xlabel style={yshift=1ex},
    title style={yshift=-0.5ex}
]
\addplot[black, solid ,mark=*, mark size=1pt] coordinates {(10,0.001765) (32,0.000512) (100,0.000171) (316,0.000120) (1000,0.000083)};
\addplot[black, dashed,mark=square*, mark size=1pt] coordinates {(10,0.001764) (32,0.000493) (100,0.000171) (316,0.000121) (1000,0.000083)};
\addplot[black, dotted,mark=triangle*, mark size=1pt] coordinates {(10,0.001907) (32,0.000365) (100,0.000156) (316,0.000129) (1000,0.000083)};
\end{axis}
\end{tikzpicture}
\end{subfigure}
\begin{subfigure}{0.24\textwidth}
\begin{tikzpicture}
\begin{axis}[
    title={(d) $\beta_{2}$},
    xlabel={\(\log_{10}(m)\)},
    xmode=log, log basis x=10,
    xtick={10,100,1000}, xticklabels={2,3,4},
    yticklabel style={/pgf/number format/fixed},
    width=4cm, height=4cm,
    mark size=1pt,
    font=\footnotesize,
    xlabel style={yshift=1ex},
    title style={yshift=-0.5ex}
]
\addplot[black, solid ,mark=*, mark size=1pt] coordinates {(10,0.002829) (32,0.000008) (100,0.000029) (316,0.000412) (1000,0.000046)};
\addplot[black, dashed,mark=square*, mark size=1pt] coordinates {(10,0.002660) (32,0.000011) (100,0.000033) (316,0.000412) (1000,0.000046)};
\addplot[black, dotted,mark=triangle*, mark size=1pt] coordinates {(10,0.002289) (32,0.000017) (100,0.000056) (316,0.000421) (1000,0.000043)};
\end{axis}
\end{tikzpicture}
\end{subfigure}
\caption{Absolute bias of the posterior mean of $\beta_{1}$ and $\beta_{2}$ when $n=5$: (1) the asymptotically unbiased prior (solid); (2) Jeffreys' prior for the variance components (dashed); (3) the prior of \cite{datta1991bayesian} (dotted). The true parameter values are (i) $\beta_{1}=\beta_{2}=\tau^{2}=\sigma^{2}=1$ for plots (a) and (b), and (ii) $\beta_{1}=\beta_{2}=1,\ \tau^{2}=0.5,\ \sigma^{2}=4$ for plots (c) and (d).}
\label{fig:AbsBias_beta}
\end{figure}

\begin{figure}[tb]
\centering
\begin{subfigure}{0.24\textwidth}
\begin{tikzpicture}
\begin{axis}[
    title={(a) $\beta_{1}$},
    xlabel={\(\log_{10}(m)\)},
    xmode=log, log basis x=10,
    ymode=log, log basis y=10,
    ytick={1e-1,1e-2,1e-3,1e-4},
    yticklabels={-1,-2,-3,-4},
    xtick={10,100,1000}, xticklabels={1,2,3},
    yticklabel style={/pgf/number format/fixed},
    width=4cm, height=4cm,
    mark size=1pt,
    font=\footnotesize,
    xlabel style={yshift=1ex},
    title style={yshift=-0.5ex}
]
\addplot[black, solid ,mark=*, mark size=1pt] coordinates {(10,0.008386) (32,0.002461) (100,0.000782) (316,0.000248) (1000,0.000078)};
\addplot[black, dashed,mark=square*, mark size=1pt] coordinates {(10,0.008361) (32,0.002459) (100,0.000782) (316,0.000248) (1000,0.000078)};
\addplot[black, dotted,mark=triangle*, mark size=1pt] coordinates {(10,0.008264) (32,0.002452) (100,0.000781) (316,0.000248) (1000,0.000078)};
\end{axis}
\end{tikzpicture}
\end{subfigure}
\begin{subfigure}{0.24\textwidth}
\begin{tikzpicture}
\begin{axis}[
    title={(b) $\beta_{2}$},
    xlabel={\(\log_{10}(m)\)},
    xmode=log, log basis x=10,
    ymode=log, log basis y=10,
    ytick={1e-1,1e-2,1e-3,1e-4},
    yticklabels={-1,-2,-3,-4},
    xtick={10,100,1000}, xticklabels={1,2,3},
    yticklabel style={/pgf/number format/fixed},
    width=4cm, height=4cm,
    mark size=1pt,
    font=\footnotesize,
    xlabel style={yshift=1ex},
    title style={yshift=-0.5ex}
]
\addplot[black, solid ,mark=*, mark size=1pt] coordinates {(10,0.020387) (32,0.005884) (100,0.001895) (316,0.000568) (1000,0.000181)};
\addplot[black, dashed,mark=square*, mark size=1pt] coordinates {(10,0.020336) (32,0.0058808) (100,0.001894) (316,0.000568) (1000,0.000181)};
\addplot[black, dotted,mark=triangle*, mark size=1pt] coordinates {(10,0.019515) (32,0.005836) (100,0.001888) (316,0.000566) (1000,0.000180)};
\end{axis}
\end{tikzpicture}
\end{subfigure}
\begin{subfigure}{0.24\textwidth}
\begin{tikzpicture}
\begin{axis}[
    title={(c) $\tau^{2}$},
    xlabel={\(\log_{10}(m)\)},
    xmode=log, log basis x=10,
    ymode=log, log basis y=10,
    ytick={1,1e-1,1e-2,1e-3},
    yticklabels={0,-1,-2,-3},
    xtick={10,100,1000}, xticklabels={1,2,3},
    yticklabel style={/pgf/number format/fixed},
    width=4cm, height=4cm,
    mark size=1pt,
    font=\footnotesize,
    xlabel style={yshift=1ex},
    title style={yshift=-0.5ex}
]
\addplot[black, solid ,mark=*, mark size=1pt] coordinates {(10,0.305351) (32,0.095381) (100,0.029797) (316,0.008776) (1000,0.002933)};
\addplot[black, dashed,mark=square*, mark size=1pt] coordinates {(10,0.587382) (32,0.114594) (100,0.031580) (316,0.008944) (1000,0.002949)};
\addplot[black, dotted,mark=triangle*, mark size=1pt] coordinates {(10,0.076620) (32,0.051475) (100,0.023777) (316,0.008158) (1000,0.002863)};
\end{axis}
\end{tikzpicture}
\end{subfigure}
\begin{subfigure}{0.24\textwidth}
\begin{tikzpicture}
\begin{axis}[
    title={(d) $\sigma^{2}$},
    xlabel={\(\log_{10}(m)\)},
    xmode=log, log basis x=10,
    ymode=log, log basis y=10,
    ytick={1,1e-1,1e-2,1e-3},
    yticklabels={0,-1,-2,-3},
    xtick={10,100,1000}, xticklabels={1,2,3},
    yticklabel style={/pgf/number format/fixed},
    width=4cm, height=4cm,
    mark size=1pt,
    font=\footnotesize,
    xlabel style={yshift=1ex},
    title style={yshift=-0.5ex}
]
\addplot[black, solid ,mark=*, mark size=1pt] coordinates {(10,0.049920) (32,0.015506) (100,0.004932) (316,0.001585) (1000,0.000496)};
\addplot[black, dashed,mark=square*, mark size=1pt] coordinates {(10,0.05840) (32,0.016273) (100,0.005010) (316,0.001592) (1000,0.000497)};
\addplot[black, dotted,mark=triangle*, mark size=1pt] coordinates {(10,0.033089) (32,0.013415) (100,0.004719) (316,0.001561) (1000,0.000493)};
\end{axis}
\end{tikzpicture}
\end{subfigure}

\vspace{1em}

\begin{subfigure}{0.24\textwidth}
\begin{tikzpicture}
\begin{axis}[
    title={(e) $\beta_{1}$},
    xlabel={\(\log_{10}(m)\)},
    xmode=log, log basis x=10,
    ymode=log, log basis y=10,
    ytick={1e-1,1e-2,1e-3,1e-4},
    yticklabels={-1,-2,-3,-4},
    xtick={10,100,1000}, xticklabels={1,2,3},
    yticklabel style={/pgf/number format/fixed},
    width=4cm, height=4cm,
    mark size=1pt,
    font=\footnotesize,
    xlabel style={yshift=1ex},
    title style={yshift=-0.5ex}
]
\addplot[black, solid ,mark=*, mark size=1pt] coordinates {(10,0.028660) (32,0.008907) (100,0.002665) (316,0.000885) (1000,0.000275)};
\addplot[black, dashed,mark=square*, mark size=1pt] coordinates {(10,0.028801) (32,0.008907) (100,0.002665) (316,0.000885) (1000,0.000275)};
\addplot[black, dotted,mark=triangle*, mark size=1pt] coordinates {(10,0.028964) (32,0.008908) (100,0.002664) (316,0.000884) (1000,0.000275)};
\end{axis}
\end{tikzpicture}
\end{subfigure}
\begin{subfigure}{0.24\textwidth}
\begin{tikzpicture}
\begin{axis}[
    title={(f) $\beta_{2}$},
    xlabel={\(\log_{10}(m)\)},
    xmode=log, log basis x=10,
    ymode=log, log basis y=10,
    ytick={1e-1,1e-2,1e-3,1e-4},
    yticklabels={-1,-2,-3,-4},
    xtick={10,100,1000}, xticklabels={1,2,3},
    yticklabel style={/pgf/number format/fixed},
    width=4cm, height=4cm,
    mark size=1pt,
    font=\footnotesize,
    xlabel style={yshift=1ex},
    title style={yshift=-0.5ex}
]
\addplot[black, solid ,mark=*, mark size=1pt] coordinates {(10,0.037228) (32,0.011337) (100,0.003410) (316,0.001104) (1000,0.000353)};
\addplot[black, dashed,mark=square*, mark size=1pt] coordinates {(10,0.038018) (32,0.011360) (100,0.003411) (316,0.001104) (1000,0.000353)};
\addplot[black, dotted,mark=triangle*, mark size=1pt] coordinates {(10,0.037899) (32,0.011437) (100,0.003403) (316,0.001096) (1000,0.000350)};
\end{axis}
\end{tikzpicture}
\end{subfigure}
\begin{subfigure}{0.24\textwidth}
\begin{tikzpicture}
\begin{axis}[
    title={(g) $\tau^{2}$},
    xlabel={\(\log_{10}(m)\)},
    xmode=log, log basis x=10,
    ymode=log, log basis y=10,
    ytick={1,1e-1,1e-2,1e-3},
    yticklabels={0,-1,-2,-3},
    xtick={10,100,1000}, xticklabels={1,2,3},
    yticklabel style={/pgf/number format/fixed},
    width=4cm, height=4cm,
    mark size=1pt,
    font=\footnotesize,
    xlabel style={yshift=1ex},
    title style={yshift=-0.5ex}
]
\addplot[black, solid ,mark=*, mark size=1pt] coordinates {(10,0.286415) (32,0.085390) (100,0.034273) (316,0.011598) (1000,0.003624)};
\addplot[black, dashed,mark=square*, mark size=1pt] coordinates {(10,0.736013) (32,0.114530) (100,0.036771) (316,0.011801) (1000,0.003640)};
\addplot[black, dotted,mark=triangle*, mark size=1pt] coordinates {(10,0.313480) (32,0.145577) (100,0.050707) (316,0.013920) (1000,0.003762)};
\end{axis}
\end{tikzpicture}
\end{subfigure}
\begin{subfigure}{0.24\textwidth}
\begin{tikzpicture}
\begin{axis}[
    title={(h) $\sigma^{2}$},
    xlabel={\(\log_{10}(m)\)},
    xmode=log, log basis x=10,
    ymode=log, log basis y=10,
    ytick={1,1e-1,1e-2,1e-3},
    yticklabels={0,-1,-2,-3},
    xtick={10,100,1000}, xticklabels={1,2,3},
    yticklabel style={/pgf/number format/fixed},
    width=4cm, height=4cm,
    mark size=1pt,
    font=\footnotesize,
    xlabel style={yshift=1ex},
    title style={yshift=-0.5ex}
]
\addplot[black, solid ,mark=*, mark size=1pt] coordinates {(10,0.694959) (32,0.228050) (100,0.077464) (316,0.024998) (1000,0.008067)};
\addplot[black, dashed,mark=square*, mark size=1pt] coordinates {(10,0.736823) (32,0.235277) (100,0.078840) (316,0.025102) (1000,0.008082)};
\addplot[black, dotted,mark=triangle*, mark size=1pt] coordinates {(10,0.872265) (32,0.269151) (100,0.082189) (316,0.025570) (1000,0.008060)};
\end{axis}
\end{tikzpicture}
\end{subfigure}
\caption{Mean squared error of the posterior mean on a $\log_{10}$ scale when $n=5$: (1) the asymptotically unbiased prior (solid); (2) Jeffreys' prior for the variance components (dashed); (3) the prior of \cite{datta1991bayesian} (dotted). The true parameter values are (i) $\beta_{1}=\beta_{2}=\tau^{2}=\sigma^{2}=1$ for plots (a)--(d), and (ii) $\beta_{1}=\beta_{2}=1,\ \tau^{2}=0.5,\ \sigma^{2}=4$ for plots (e)--(f).}
\label{fig:MSE}
\end{figure}
\begin{figure}[tb]
\centering
\def\Nominal{\addplot[very thick,color=black,forget plot] coordinates {(10,0.95) (1000,0.95)};}
\begin{subfigure}{0.24\textwidth}
\begin{tikzpicture}
\begin{axis}[
    title={(a) $\beta_{1}$},
    xlabel={\(\log_{10}(m)\)},
    xmode=log,log basis x=10,
    xtick={10,100,1000},xticklabels={1,2,3},
    ymin=0.93,ymax=0.96,
    width=4cm,height=4cm,
    mark size=1pt,
    font=\footnotesize,
    xlabel style={yshift=1ex},
    title style={yshift=-0.5ex}
]
\Nominal
\addplot[black,  solid ,mark=*] coordinates {(10,0.9384) (32,0.9484) (100,0.9491) (316,0.9454) (1000,0.9475)};
\addplot[black,  dashed,mark=square*] coordinates {(10,0.9449) (32,0.9511) (100,0.9497) (316,0.9455) (1000,0.9476)};
\addplot[black,  dotted,mark=triangle*] coordinates {(10,0.9516) (32,0.9513) (100,0.9534) (316,0.9467) (1000,0.9418)};
\end{axis}
\end{tikzpicture}
\end{subfigure}
\begin{subfigure}{0.24\textwidth}
\begin{tikzpicture}
\begin{axis}[
    title={(b) $\beta_{2}$},
    xlabel={\(\log_{10}(m)\)},
    xmode=log,log basis x=10,
    xtick={10,100,1000},xticklabels={1,2,3},
    ymin=0.91,ymax=0.96,
    width=4cm,height=4cm,
    mark size=1pt,
    font=\footnotesize,
    xlabel style={yshift=1ex},
    title style={yshift=-0.5ex}
]
\Nominal
\addplot[black,  solid ,mark=*] coordinates {(10,0.9200) (32,0.9390) (100,0.9411) (316,0.9493) (1000,0.9460)};
\addplot[black,  dashed,mark=square*] coordinates {(10,0.9383) (32,0.9435) (100,0.9427) (316,0.9494) (1000,0.9462)};
\addplot[black,  dotted,mark=triangle*] coordinates {(10,0.9518) (32,0.9467) (100,0.9457) (316,0.9502) (1000,0.9480)};
\end{axis}
\end{tikzpicture}
\end{subfigure}
\begin{subfigure}{0.24\textwidth}
\begin{tikzpicture}
\begin{axis}[
    title={(c) $\tau^{2}$},
    xlabel={\(\log_{10}(m)\)},
    xmode=log,log basis x=10,
    xtick={10,100,1000},xticklabels={1,2,3},
    ymin=0.91,ymax=1.0,
    width=4cm,height=4cm,
    mark size=1pt,
    font=\footnotesize,
    xlabel style={yshift=1ex},
    title style={yshift=-0.5ex}
]
\Nominal
\addplot[black,  solid ,mark=*] coordinates {(10,0.9168) (32,0.9373) (100,0.9455) (316,0.9536) (1000,0.9446)};
\addplot[black,  dashed,mark=square*] coordinates {(10,0.9557) (32,0.9465) (100,0.9483) (316,0.9541) (1000,0.9456)};
\addplot[black,  dotted,mark=triangle*] coordinates {(10,0.9958) (32,0.9840) (100,0.9646) (316,0.9601) (1000,0.9482)};
\end{axis}
\end{tikzpicture}
\end{subfigure}
\begin{subfigure}{0.24\textwidth}
\begin{tikzpicture}
\begin{axis}[
    title={(d) $\sigma^{2}$},
    xlabel={\(\log_{10}(m)\)},
    xmode=log,log basis x=10,
    xtick={10,100,1000},xticklabels={1,2,3},
    ymin=0.94,ymax=0.99,
    width=4cm,height=4cm,
    mark size=1pt,
    font=\footnotesize,
    xlabel style={yshift=1ex},
    title style={yshift=-0.5ex}
]
\Nominal
\addplot[black,  solid ,mark=*] coordinates {(10,0.9426) (32,0.9506) (100,0.9507) (316,0.9485) (1000,0.9519)};
\addplot[black,  dashed,mark=square*] coordinates {(10,0.9514) (32,0.9535) (100,0.9515) (316,0.9501) (1000,0.9520)};
\addplot[black,  dotted,mark=triangle*] coordinates {(10,0.9810) (32,0.9654) (100,0.9562) (316,0.9516) (1000,0.9536)};
\end{axis}
\end{tikzpicture}
\end{subfigure}

\vspace{1em}
\begin{subfigure}{0.24\textwidth}
\begin{tikzpicture}
\begin{axis}[
    title={(e) $\beta_{1}$},
    xlabel={\(\log_{10}(m)\)},
    xmode=log,log basis x=10,
    xtick={10,100,1000},xticklabels={1,2,3},
    ymin=0.51,ymax=0.97,
    width=4cm,height=4cm,
    mark size=1pt,
    font=\footnotesize,
    xlabel style={yshift=1ex},
    title style={yshift=-0.5ex}
]
\Nominal
\addplot[black,  solid ,mark=*] coordinates {(10,0.9430) (32,0.9416) (100,0.9514) (316,0.9453) (1000,0.9501)};
\addplot[black,  dashed,mark=square*] coordinates {(10,0.9513) (32,0.9444) (100,0.9522) (316,0.9456) (1000,0.9501)};
\addplot[black,  dotted,mark=triangle*] coordinates {(10,0.5765) (32,0.5685) (100,0.5590) (316,0.5408) (1000,0.5257)};
\end{axis}
\end{tikzpicture}
\end{subfigure}
\begin{subfigure}{0.24\textwidth}
\begin{tikzpicture}
\begin{axis}[
    title={(f) $\beta_{2}$},
    xlabel={\(\log_{10}(m)\)},
    xmode=log,log basis x=10,
    xtick={10,100,1000},xticklabels={1,2,3},
    ymin=0.68,ymax=0.97,
    width=4cm,height=4cm,
    mark size=1pt,
    font=\footnotesize,
    xlabel style={yshift=1ex},
    title style={yshift=-0.5ex}
]
\Nominal
\addplot[black,  solid ,mark=*] coordinates {(10,0.9472) (32,0.9434) (100,0.9499) (316,0.9483) (1000,0.9482)};
\addplot[black,  dashed,mark=square*] coordinates {(10,0.9600) (32,0.9482) (100,0.9515) (316,0.9490) (1000,0.9485)};
\addplot[black,  dotted,mark=triangle*] coordinates {(10,0.7726) (32,0.7551) (100,0.7350) (316,0.7055) (1000,0.6863)};
\end{axis}
\end{tikzpicture}
\end{subfigure}
\begin{subfigure}{0.24\textwidth}
\begin{tikzpicture}
\begin{axis}[
    title={(g) $\tau^{2}$},
    xlabel={\(\log_{10}(m)\)},
    xmode=log,log basis x=10,
    xtick={10,100,1000},xticklabels={1,2,3},
    ymin=0.73,ymax=1.0,
    width=4cm,height=4cm,
    mark size=1pt,
    font=\footnotesize,
    xlabel style={yshift=1ex},
    title style={yshift=-0.5ex}
]
\Nominal
\addplot[black,  solid ,mark=*] coordinates {(10,0.9888) (32,0.9747) (100,0.9507) (316,0.9493) (1000,0.9512)};
\addplot[black,  dashed,mark=square*] coordinates {(10,0.9780) (32,0.9727) (100,0.9518) (316,0.9492) (1000,0.9525)};
\addplot[black,  dotted,mark=triangle*] coordinates {(10,0.7403) (32,0.8110) (100,0.8667) (316,0.9130) (1000,0.9397)};
\end{axis}
\end{tikzpicture}
\end{subfigure}
\begin{subfigure}{0.24\textwidth}
\begin{tikzpicture}
\begin{axis}[
    title={(h) $\sigma^{2}$},
    xlabel={\(\log_{10}(m)\)},
    xmode=log,log basis x=10,
    xtick={10,100,1000},xticklabels={1,2,3},
    ymin=0.82,ymax=0.96,
    width=4cm,height=4cm,
    mark size=1pt,
    font=\footnotesize,
    xlabel style={yshift=1ex},
    title style={yshift=-0.5ex}
]
\Nominal
\addplot[black,  solid ,mark=*] coordinates {(10,0.9180) (32,0.9442) (100,0.9515) (316,0.9506) (1000,0.9507)};
\addplot[black,  dashed,mark=square*] coordinates {(10,0.9478) (32,0.9498) (100,0.9513) (316,0.9508) (1000,0.9517)};
\addplot[black,  dotted,mark=triangle*] coordinates {(10,0.8222) (32,0.8952) (100,0.9267) (316,0.9375) (1000,0.9464)};
\end{axis}
\end{tikzpicture}
\end{subfigure}
\caption{Coverage probability of the 95\% credible interval for the posterior mean when $n=5$: (1) the asymptotically unbiased prior (solid); (2) Jeffreys' prior for the variance components (dashed); (3) the prior of \cite{datta1991bayesian} (dotted). The thick solid line represents the nominal level of 0.95. The true parameter values are (i) $\beta_{1}=\beta_{2}=\tau^{2}=\sigma^{2}=1$ for plots (a)--(d), and (ii) $\beta_{1}=\beta_{2}=1,\ \tau^{2}=0.5,\ \sigma^{2}=4$ for plots (e)--(f).}
\label{fig:Coverage}
\end{figure}

In this section, we provide the details of the simulation studies in Section~\ref{sec:Simulation}.

We consider the balanced nested error regression model described in Example~\ref{ex:BalancedNER}. For simplicity, we assume that $\beta$ is two-dimensional. The true parameter values are set under the following two scenarios: (i) $\beta_{1}=\beta_{2}=\tau^{2}=\sigma^{2}=1$; (ii) $\beta_{1}=\beta_{2}=1, \tau^{2}=0.5, \sigma^{2}=4$. For each area $i$, the number of units is set to $n=5$, and the sample size is varied across $m\in\{10,32,100,316,1000\}$. The covariates are generated as $x_{ij}\sim N(\mu,\Sigma)$, where
\[
  \mu=(1,2),\quad
  \Sigma=\begin{bmatrix}4&1\\1&1\end{bmatrix}
  .
\]

We compare the performance of the following three priors for the parameter $\theta$: (1) the asymptotically unbiased prior: $\pi(\theta)\propto\{\sigma^{2}(\sigma^{2}+n\tau^{2})\}^{-2}$; (2) Jeffreys' prior for the variance components combined with a flat prior for the regression coefficients: $\pi(\theta)\propto\{\sigma^{2}(\sigma^{2}+n\tau^{2})\}^{-1}$. A similar composition of priors is employed in \cite{tiao1965bayesian}, where the random effects model of the form $y_{ij}=\mu+\alpha_{i}+\epsilon_{ij}$ is considered, and in their approach, a flat prior is used for $\mu$, while Jeffreys' prior is applied to the variance components; (3) the prior proposed by \cite{datta1991bayesian}: $\pi(\beta)\propto 1,\ \tau^{2}\sim IG(a_{\tau},b_{\tau})$, and $\sigma^{2}\sim IG(a_{\sigma},b_{\sigma})$. In our simulation studies, we set $a_{\tau}=b_{\tau}=a_{\sigma}=b_{\sigma}=5$.

For the asymptotically unbiased prior and Jeffreys' prior, the Markov chain Monte Carlo sample size is set to $N=2000$ with a warm-up size of $warmup=100$, while for the prior of \cite{datta1991bayesian}, $N=20000$ with $warmup=1000$. For the prior of \cite{datta1991bayesian}, a larger sample size is chosen due to the higher autocorrelation of the chain compared to the other priors. All generated chains have at least 300 effective sample sizes, indicating good convergence and reliable posterior inference.

To examine the frequentist properties of Bayes estimators, we simulate 10000 independent datasets of $\mathcal{D}=(y_{i},x_{i})_{i=1,\ldots,m}$ for each sample size $m$. For each dataset, we compute the posterior mean $\hat{\theta}^{B}$, the bias, the mean squared error, and the coverage probability of the 95\% credible interval for each parameter.

For each prior, Gibbs sampler is applied to sample from the posterior distribution. Sampling from the posterior distribution corresponding to the asymptotically unbiased prior is explained in Section~\ref{subsec:NERPosteriorSampling}, and a similar transformation of the parameter is used for Jeffreys' prior. For the prior of \cite{datta1991bayesian}, we treat the model as a hierarchical model and sample $\beta,\tau^{2},\sigma^{2},\{v_{i}:i=1,\ldots,m\}$ by Gibbs sampler.

Figure~\ref{fig:AbsBias_beta} shows the computed absolute bias of the posterior mean of $\beta_{1}$ and $\beta_{2}$ under the two parameter settings. It can be seen that the bias of $\beta$ remains small and stable across priors and sample sizes. Figure~\ref{fig:MSE} shows the log-mean squared error of the posterior mean and Fig.~\ref{fig:Coverage} shows the coverage probability of the 95\% credible interval. From these results, it can be observed that the asymptotically unbiased prior and Jeffreys' prior exhibit comparable performance in terms of mean squared error and coverage probability. However, a closer look reveals some minor differences. In terms of mean squared error (Fig.~\ref{fig:MSE}), the asymptotically unbiased prior shows a slightly better performance, particularly for the variance components. Conversely, regarding the coverage probability of the 95\% credible intervals (Fig.~\ref{fig:Coverage}), Jeffreys' prior tends to provide coverage closer to the nominal level. In contrast, the performance of the prior of \cite{datta1991bayesian} is more sensitive to the true parameter settings, which might be attributed to the choice of hyperparameters in the prior.

\bibliographystyle{chicago}
\bibliography{main}
\end{document}